\documentclass[12pt,CJK]{article}
\setlength{\topmargin}{-2cm}
\setlength{\oddsidemargin}{-0.5cm}
\setlength{\textwidth}{18cm}
\setlength{\textheight}{24cm}
\usepackage{amsfonts,amsmath,amssymb,amsthm,mathrsfs}
\usepackage{latexsym}
\usepackage{graphicx}
\usepackage{indentfirst}
\usepackage{color}
\usepackage{fancyhdr}
\usepackage[colorlinks,linktocpage,linkcolor=blue]{hyperref}

\theoremstyle{plain}
\newtheorem{thm}{Theorem}[section]
\newtheorem{definition}{Definition}
\newtheorem{lemma}[thm]{Lemma}
\newtheorem{corollary}[thm]{Corollary}
\newtheorem{remark}[thm]{Remark}
\numberwithin{equation}{section}

\theoremstyle{remark}

% Definition of Average integral
\def\Xint#1{\mathchoice
  {\XXint\displaystyle\textstyle{#1}}%
  {\XXint\textstyle\scriptstyle{#1}}%
  {\XXint\scriptstyle\scriptscriptstyle{#1}}%
  {\XXint\scriptscriptstyle\scriptscriptstyle{#1}}%
  \!\int}
\def\XXint#1#2#3{{\setbox0=\hbox{$#1{#2#3}{\int}$}
  \vcenter{\hbox{$#2#3$}}\kern-.5\wd0}}

\def\dashint{\Xint-}
% From now on, we can employ \dashint to get average integral which adds a bar in the mid of int

\begin{document}
\allowdisplaybreaks
\pagestyle{myheadings}\markboth{$~$ \hfill {\rm Q. Xu,} \hfill $~$} {$~$ \hfill {\rm  } \hfill$~$}
\author{Qiang Xu
\thanks{Corresponding author.}
\thanks{Email: xuqiang@math.pku.edu.cn.}
\quad Shulin Zhou
\thanks{Email: szhou@math.pku.edu.cn.} \\
School of Mathematical Sciences, Peking University, \\
Beijing, 100871, PR China. \vspace{0.5cm}
}

%

%\author{Weiren Zhao
%\thanks{Email: xuqiang@math.pku.edu.cn.}
%\thanks{This work was supported by the National Natural Science Foundation of China (Grant No. 11471147).}

\title{\textbf{$L^p$ Neumann Problems in\\
Homogenization of General Elliptic Systems} }
\maketitle
\begin{abstract}
In this paper,
we extend the nontangential maximal function estimate obtained by C. Kenig, F. Lin and Z. Shen in
\cite{KFS1} to the nonhomogeneous elliptic operators with rapidly oscillating periodic coefficients.
The result relies on the previous work \cite{X2}, and optimal boundary estimates which is based upon
certain estimates on convergence rates.
Compared to the homogeneous case, the additional bootstrap process seems inevitable, and
the Neumann boundary corrector caused by the lower order term are still useful here.\\
\textbf{Key words.} Homogenization; elliptic operator; nontangential maximal function estimates.
\end{abstract}

\section{Instruction and main results}

The main purpose of this paper is to investigate
the well-posedness of $L^p$ Neumann problems
for nonhomogeneous elliptic systems, arising in the homogenization theory.
More precisely, we continue to consider the following operators depending on a parameter $\varepsilon > 0$,
\begin{eqnarray*}
\mathcal{L}_{\varepsilon} =
-\text{div}\big[A(x/\varepsilon)\nabla +V(x/\varepsilon)\big] + B(x/\varepsilon)\nabla +c(x/\varepsilon) +\lambda I
\end{eqnarray*}
where $\lambda\geq 0$ is a constant, and $I$ is an identity matrix.

Let $d\geq 3$, $m\geq 1$, and $1 \leq i,j \leq d$ and $1\leq \alpha,\beta\leq m$.
Suppose that $A = (a_{ij}^{\alpha\beta})$, $V=(V_i^{\alpha\beta})$, $B=(B_i^{\alpha\beta})$, $c=(c^{\alpha\beta})$ are real measurable functions,
satisfying the following conditions:
\begin{itemize}
\item the uniform ellipticity condition
\begin{equation}\label{a:1}
 \mu |\xi|^2 \leq a_{ij}^{\alpha\beta}(y)\xi_i^\alpha\xi_j^\beta\leq \mu^{-1} |\xi|^2
 \quad \text{for}~y\in\mathbb{R}^d,~\text{and}~\xi=(\xi_i^\alpha)\in \mathbb{R}^{md},~\text{where}~ \mu>0;
\end{equation}
 (The summation convention for repeated indices is used throughout.)
\item the periodicity condition
\begin{equation}\label{a:2}
A(y+z) = A(y),~~ V(y+z) = V(y),
~~B(y+z) = B(y),~~ c(y+z) = c(y)
\end{equation}
for $y\in\mathbb{R}^d$ and $z\in \mathbb{Z}^d$;
\item the boundedness condition
\begin{equation}\label{a:3}
 \max\big\{\|V\|_{L^{\infty}(\mathbb{R}^d)},
 ~\|B\|_{L^{\infty}(\mathbb{R}^d)},~\|c\|_{L^{\infty}(\mathbb{R}^d)}\big\}
 \leq \kappa;
\end{equation}
\item the regularity condition
\begin{equation}\label{a:4}
 \max\big\{ \|A\|_{C^{0,\tau}(\mathbb{R}^d)},~ \|V\|_{C^{0,\tau}(\mathbb{R}^d)},
 ~\|B\|_{C^{0,\tau}(\mathbb{R}^d)}\big\} \leq \kappa,
 \qquad \text{where}~\tau\in(0,1)~\text{and}~\kappa > 0.
\end{equation}
\end{itemize}

Although we do not seek the operator $\mathcal{L}_\varepsilon$ to be a self-adjoint operator,
the symmetry condition on its leading term, i.e.,
\begin{equation*}
A^* = A \quad (a_{ij}^{\alpha\beta} = a_{ji}^{\alpha\beta}\big)
\end{equation*}
is necessary in the later discussion. To ensure the solvability,
the following constant is crucial,
\begin{equation*}
\lambda_0 = \frac{c(m,d)}{\mu}\Big\{\|V\|_{L^\infty(\mathbb{R}^d)}^2 +
\|B\|_{L^\infty(\mathbb{R}^d)}^2+ \|c\|_{L^\infty(\mathbb{R}^d)}\Big\}.
\end{equation*}

Throughout the paper, we always assume $\Omega\subset\mathbb{R}^d$ is a bounded Lipschitz domain, and $r_0$ denotes
the diameter of $\Omega$, unless otherwise stated.
In order to state the Neumann boundary value problem,
the conormal derivatives related to $\mathcal{L}_\varepsilon$ is defined as
\begin{equation*}%\label{op:2}
\frac{\partial}{\partial\nu_\varepsilon}
 = n\cdot\big[A(\cdot/\varepsilon)\nabla + V(\cdot/\varepsilon)\big]
 \qquad \text{on}~\partial\Omega,
\end{equation*}
where $n=(n_1,\cdots,n_d)$ is the outward unit normal
vector to $\partial\Omega$.

\begin{thm}[nontangential maximal function estimates]\label{thm:1.1}
Let $1<p<\infty$.
Suppose that the coefficients $\eqref{a:1}$, $\eqref{a:2}$, $\eqref{a:3}$ and
$\eqref{a:4}$ with $\lambda\geq\max\{\mu,\lambda_0\}$ and $A^*=A$. Let $\Omega$
be a bounded $C^{1,\eta}$ domain with some $\eta\in(0,1)$.
Then for any $g\in L^p(\partial\Omega;\mathbb{R}^m)$,
the weak solution $u_\varepsilon\in H^1(\Omega;\mathbb{R}^m)$ to
\begin{equation}\label{pde:1}
(\mathbf{NH_\varepsilon})\left\{
\begin{aligned}
\mathcal{L}_\varepsilon(u_\varepsilon) &= 0 &\quad &\emph{in}~~\Omega, \\
 \frac{\partial u_\varepsilon}{\partial\nu_\varepsilon} &= g &\emph{n.t.~}&\emph{on} ~\partial\Omega, \\
 (\nabla u_\varepsilon)^* &\in L^p(\partial\Omega), &\quad &
\end{aligned}\right.
\end{equation}
satisfies the uniform estimate
\begin{equation}\label{pri:1.1}
\|(\nabla u_\varepsilon)^*\|_{L^p(\partial\Omega)}
+ \|(u_\varepsilon)^*\|_{L^p(\partial\Omega)} \leq C\|g\|_{L^p(\partial\Omega)}
\end{equation}
where $C$ depends on $\mu,\kappa,\tau,\lambda,m,d$ and $\Omega$.
\end{thm}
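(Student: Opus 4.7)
The plan is to follow the three-stage strategy of Kenig, Lin and Shen \cite{KFS1}, adapted to the nonhomogeneous operator $\mathcal{L}_\varepsilon$: first establish \eqref{pri:1.1} at $p=2$ via a uniform Rellich-type identity, then promote upward to $2<p<\infty$ by Shen's real-variable method built on the convergence rate estimates of \cite{X2}, and finally dualize against the adjoint operator $\mathcal{L}_\varepsilon^*$ (which satisfies the same structural hypotheses with the roles of $V$ and $B$ interchanged) to recover $1<p<2$. Throughout, the Neumann boundary corrector constructed in \cite{X2} plays a double role: it kills the boundary oscillation coming from $n\cdot V(\cdot/\varepsilon)$ and furnishes the sharp convergence rate that drives the real-variable step.

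For the $L^2$ stage, I would test $\mathcal{L}_\varepsilon u_\varepsilon=0$ against $h\cdot\nabla u_\varepsilon$ for a smooth vector field $h$ transverse to $\partial\Omega$ and integrate by parts, i.e.\ a Rellich-type multiplier identity. The symmetry $A^\ast=A$ converts the leading bulk term into a perfect divergence, producing on $\partial\Omega$ an identity that compares $|\partial u_\varepsilon/\partial\nu_\varepsilon|$ with the tangential gradient $|\nabla_{\mathrm{tan}} u_\varepsilon|$ modulo controllable interior error. The lower-order interior contributions generated by $V,B,c$ are absorbed using the coercivity built into $\lambda\geq\max\{\mu,\lambda_0\}$, while the oscillation $n\cdot V(\cdot/\varepsilon)$ on $\partial\Omega$ is tamed by inserting the Neumann corrector from \cite{X2}. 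Combined with the standard nontangential maximal function theory on $C^{1,\eta}$ domains (which also controls $(u_\varepsilon)^{\ast}$), this yields \eqref{pri:1.1} at $p=2$ uniformly in $\varepsilon$.

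For $p>2$ I would run Shen's real-variable criterion on boundary cubes $\Delta(x_0,r)=B(x_0,r)\cap\partial\Omega$: decompose $u_\varepsilon$ into a local piece handled by the $L^2$ bound and a smoother piece $w$, approximate $w$ by the solution $w_0$ of the corresponding Neumann problem for the homogenized operator $\mathcal{L}_0$, and combine the boundary $C^{1,\alpha}$ estimate for $\mathcal{L}_0$ (available since $\Omega$ is $C^{1,\eta}$) with a convergence rate $\|w-w_0\|\lesssim(\varepsilon/r)^{\sigma}\|w\|$ extracted from \cite{X2}. This produces a reverse-H\"older type inequality for $(\nabla u_\varepsilon)^{\ast}$ over $\Delta$, which together with the $L^2$ bound yields \eqref{pri:1.1} for all $2<p<\infty$; the case $1<p<2$ then follows by the duality described above. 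The main obstacle I anticipate lies precisely in this stage: the initial convergence rate is contaminated by the lower-order terms $V,B,c$ and closes only for exponents slightly above $2$, so the bootstrap explicitly flagged in the abstract --- feeding each newly gained $L^p$ bound back into the corrector analysis to successively enlarge the admissible range of $p$ --- seems genuinely unavoidable, and it is here rather than in the Rellich step that the real work lies.
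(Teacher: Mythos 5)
Your outline of the $p=2$ and $2<p<\infty$ stages tracks the paper's strategy reasonably well: the $L^2$ estimate is imported from the layer potential paper \cite{X2} (which internally uses Rellich-type identities, so your description of that stage is defensible), and the upward extension is indeed Shen's real-variable method driven by a reverse H\"older inequality (Theorem~\ref{thm:2.1} and Lemma~\ref{lemma:6.1}), with the convergence-rate approximation to $\mathcal{L}_0$, the Neumann corrector $\Psi_{\varepsilon,0}$ taming the $n\cdot V(\cdot/\varepsilon)$ oscillation, and the bootstrap you flag all playing the roles you describe.

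However, your treatment of $1<p<2$ by duality against $\mathcal{L}_\varepsilon^*$ is a genuine gap, and the paper says so explicitly. The obstruction is structural, not technical: the quantity being bounded, $(\nabla u_\varepsilon)^*$, is a \emph{nonlinear} (pointwise supremum) functional of the solution, so one cannot pass the $L^p$ estimate through a bilinear pairing $\langle g, \cdot\rangle$. Duality in Neumann boundary value problems would at best exchange the $L^p$ Neumann problem for the $L^{p'}$ regularity problem for the adjoint, which is a different statement and does not directly yield a nontangential maximal function bound for the Neumann data. The paper instead follows Dahlberg--Kenig: it proves the endpoint atomic estimate
\begin{equation*}
\int_{\partial\Omega}(\nabla u_\varepsilon)^* \, dS \leq C
\end{equation*}
for any $H^1$-atom $a$ as boundary data (Theorem~\ref{thm:6.1}), whose proof rests on pointwise decay and H\"older estimates for the Neumann function $\mathbf{N}_\varepsilon(x,y)$ established in Section~5 (in turn relying on fundamental solution bounds from \cite{X2}), and then interpolates this $H^1_{\mathrm{at}}\to L^1$ bound against the $L^2\to L^2$ bound to cover $1<p<2$. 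Without this atomic/interpolation step your argument does not close below $p=2$.
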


Note that the second line of $(\mathbf{NH_\varepsilon})$
means that the conormal derivative of $u_\varepsilon$ converges to $f$ in a nontangenial way
instead of in the sense of trace, and using the abbreviation ``n.t.'' depicts this difference.
The notation $(\nabla u_\varepsilon)^*$ in the third line
represents the nontangential maximal function of
$\nabla u_\varepsilon$ on $\partial\Omega$ (see Definition $\ref{def:1}$).

The main strategy in the proof of the above theorem has been well developed in \cite{KFS1}.
Roughly speaking, the proof should be divided into two parts: (i) $2\leq p<\infty$
and (ii) $1<p<2$. On account of a real method given by Z. Shen in \cite{S5},
originally inspired by L. Caffarelli and I. Peral in \cite{CP},
the case (i) will be reduced to a revise H\"older inequality. For the case (ii), one may
derive the estimate $\|(\nabla u_\varepsilon)^*\|_{L^1(\partial\Omega)}\leq
C\|g\|_{H^1_{at}(\partial\Omega)}$ as in \cite{KFS1,DK}, where the right-hand side means the given data
g is in the atomic $H^1$ space (see for example \cite[pp.438]{DK}), and then by a interpolating argument one may obtain the desired estimate.

However, to complete the whole proof of Theorem $\ref{thm:1.1}$ is not as easy as it appears.
In terms of layer potential methods, we first establish the estimate $\eqref{pri:1.1}$ for $p=2$
in Lipschitz domains (see \cite[Theorem 1.6]{X2}). Then, applying the real method
(see Lemma $\ref{lemma:2.1}$) to the nonhomogeneous operators, one may derive the following result.

\begin{thm}\label{thm:2.1}
Let $p>2$ and $\Omega$ be a bounded Lipschitz domain. Assume that
\begin{equation}\label{a:2.1}
\bigg(\dashint_{B(Q,r)\cap\partial\Omega}|(\nabla u_\varepsilon)^*|^p dS\bigg)^{1/p}
\leq C\bigg(\dashint_{B(Q,2r)\cap\partial\Omega}|(\nabla u_\varepsilon)^*|^2dS\bigg)^{1/2}
+  C\bigg(\dashint_{B(Q,2r)\cap\partial\Omega}
|(u_\varepsilon)^*|^2dS\bigg)^{1/2},
\end{equation}
whenever $u_\varepsilon\in H^1(B(Q,3r)\cap\Omega;\mathbb{R}^m)$ is a weak solution to
$\mathcal{L}_\varepsilon(u_\varepsilon) = 0$ in $B(Q,3r)\cap\Omega$ with
$\partial u_\varepsilon/\partial\nu_\varepsilon = 0$ on $B(Q,3r)\cap\partial\Omega$ for some
$Q\in\partial\Omega$ and $0<r<r_0$. Then the weak solutions to
$\mathcal{L}_\varepsilon(u_\varepsilon) = 0$ in $\Omega$ and
$\partial u_\varepsilon/\partial\nu_\varepsilon = g \in L^p(\partial\Omega;\mathbb{R}^m)$
satisfy the estimate
$\|(\nabla u_\varepsilon)^*\|_{L^p(\partial\Omega)}\leq C\|g\|_{L^p(\partial\Omega)}$.
\end{thm}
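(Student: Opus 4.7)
The plan is to invoke the real variable argument (Lemma~\ref{lemma:2.1}) on the nontangential maximal quantity
$F_\varepsilon := (\nabla u_\varepsilon)^* + (u_\varepsilon)^*$
defined on $\partial\Omega$, using the hypothesis \eqref{a:2.1} to supply the ``smooth-part'' reverse H\"older inequality and the $L^2$ Neumann theory already established in \cite[Theorem 1.6]{X2} to supply the ``data-part'' $L^2$ control.

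Concretely, fix a surface ball $\Delta = B(Q_0, r)\cap\partial\Omega$ with $0 < r < c_0 r_0$ and let $u_\varepsilon$ solve the global Neumann problem with datum $g \in L^p(\partial\Omega; \mathbb{R}^m)$. First I would split $g = g_1 + g_2$ with $g_1 := g\,\chi_{4\Delta}$ and let $v_\varepsilon \in H^1(\Omega; \mathbb{R}^m)$ be the weak solution in $\Omega$ to
$\mathcal{L}_\varepsilon v_\varepsilon = 0$, $\partial v_\varepsilon / \partial\nu_\varepsilon = g_1$,
whose uniform $L^2$ well-posedness is the content of \cite[Theorem 1.6]{X2}; here the hypothesis $\lambda \geq \max\{\mu, \lambda_0\}$ yields coercivity and eliminates the usual compatibility obstruction typical of Neumann problems. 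This produces
\[
\|(\nabla v_\varepsilon)^*\|_{L^2(\partial\Omega)} + \|(v_\varepsilon)^*\|_{L^2(\partial\Omega)} \leq C \|g\|_{L^2(4\Delta)},
\]
which after localization to $2\Delta$ is precisely the ``small piece'' input demanded by the real method.

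Setting $w_\varepsilon := u_\varepsilon - v_\varepsilon$, one has $\mathcal{L}_\varepsilon w_\varepsilon = 0$ in $\Omega$ and $\partial w_\varepsilon / \partial\nu_\varepsilon = 0$ on $4\Delta \supset B(Q_0, 3r)\cap\partial\Omega$, so hypothesis \eqref{a:2.1} applies directly to $w_\varepsilon$ on the ball $B(Q_0, r)$ and delivers a reverse H\"older inequality for $(\nabla w_\varepsilon)^* + (w_\varepsilon)^*$ with $L^2$ averages over $B(Q_0, 2r)\cap\partial\Omega$ on the right. Putting the two ingredients together verifies the hypotheses of Lemma~\ref{lemma:2.1} for the functional $F_\varepsilon$, yielding
$\|F_\varepsilon\|_{L^p(\partial\Omega)} \leq C \|g\|_{L^p(\partial\Omega)}$,
from which the claimed bound on $\|(\nabla u_\varepsilon)^*\|_{L^p(\partial\Omega)}$ is immediate.

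The most delicate point, I expect, is that the auxiliary term $(u_\varepsilon)^*$ sits on the right-hand side of \eqref{a:2.1} but is absent from the left-hand side of the target conclusion, which creates a risk of a circular dependency when one tries to apply the real method just to $(\nabla u_\varepsilon)^*$. The resolution is to run the machinery on the augmented functional $F_\varepsilon$, exploiting the fact that $(\nabla v_\varepsilon)^*$ and $(v_\varepsilon)^*$ enjoy the \emph{same} $L^2$ bound from \cite[Theorem 1.6]{X2} while the reverse H\"older on the $w_\varepsilon$-side controls their sum. A secondary technical matter is ensuring that all constants appearing in the decomposition estimates for $v_\varepsilon$ are independent of $\varepsilon$; this is already built into the uniform $L^2$ theory and so requires no further work.
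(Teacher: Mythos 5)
Your decomposition and overall strategy coincide exactly with the paper's: split the datum into a part supported near the ball and a part supported far from it, apply the uniform $L^2$ Neumann theory to the near piece, apply the reverse H\"older hypothesis $\eqref{a:2.1}$ to the far piece, and feed the augmented quantity $F_\varepsilon = (\nabla u_\varepsilon)^* + (u_\varepsilon)^*$ into Lemma $\ref{lemma:2.1}$. You also correctly identify the crux, namely that $(u_\varepsilon)^*$ appears on the right of $\eqref{a:2.1}$ but not on the left, forcing one to run the real method on the augmented functional.

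However, there is a gap precisely at that crux. You assert that hypothesis $\eqref{a:2.1}$ ``delivers a reverse H\"older inequality for $(\nabla w_\varepsilon)^* + (w_\varepsilon)^*$.'' It does not: the left-hand side of $\eqref{a:2.1}$ controls only $(\nabla w_\varepsilon)^*$ in $L^p$, while the application of Lemma $\ref{lemma:2.1}$ with $R_S = (\nabla w_\varepsilon)^* + (w_\varepsilon)^*$ requires an $L^p$ bound on $(w_\varepsilon)^*$ as well. This piece is not free: the paper obtains it separately (estimate $\eqref{f:2.3}$) from the uniform local boundary $L^\infty$ estimate $\eqref{pri:2.3}$ together with \cite[Corollary 3.5]{X0}, giving $\big(\dashint_{\Delta_r}|(w_\varepsilon)^*|^p\big)^{1/p} \leq C\big(\dashint_{D_{2r}}|w_\varepsilon|^2\big)^{1/2}$ and then converting back to nontangential maximal functions. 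A similar, if milder, issue appears on the $v_\varepsilon$-side: \cite[Theorem 1.6]{X2} supplies the $L^2$ control on $(\nabla v_\varepsilon)^*$, but the bound on $(v_\varepsilon)^*$ is not part of that theorem and in the paper is produced through the radial maximal function machinery of Lemmas $\ref{lemma:2.3}$ and $\ref{lemma:2.4}$, i.e.\ $(v_\varepsilon)^*(Q) \leq C\mathrm{M}_{\partial\Omega}(\mathcal{M}(v_\varepsilon))(Q)$ together with $\|\mathcal{M}(h)\|_{L^p(\partial\Omega)} \leq C\|h\|_{W^{1,p}(\Omega)}$. Both of these ingredients are needed to make the real-method hypotheses verifiable for the augmented functional, and neither follows directly from the sources you invoke.
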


Compared to the homogeneous case, here we need to treat the quantity
``$\nabla u_\varepsilon+u_\varepsilon$'' as a whole.
The reason is that $u_\varepsilon$ as a solution is full certainty, and we can not
use Poincar\'e's inequality as freely as in the homogeneous case.
This point leads to the main technical difficulties in the paper.
In view of the above theorem, the problem is reduced to show the estimate
$\eqref{a:2.1}$, and it will be done by the following boundary estimate.

\begin{thm}[boundary Lipschitz estimates]\label{thm:1.0}
Let $\Omega$ be a bounded $C^{1,\eta}$ domain.
Suppose that the coefficients of $\mathcal{L}_\varepsilon$ satisfy the conditions
$\eqref{a:1}$, $\eqref{a:2}$, $\eqref{a:3}$ with $\lambda\geq\lambda_0$ and $A,V$ additionally satisfy $\eqref{a:4}$.
Let $u_\varepsilon\in H^1(B(Q,r)\cap\Omega;\mathbb{R}^m)$ be a weak solution
to $\mathcal{L}_\varepsilon(u_\varepsilon) = \emph{div}(f)+F$ in $B(Q,r)\cap\Omega$ with
$\partial u_\varepsilon/\partial\nu_\varepsilon = g-n\cdot f$ on $B(Q,r)\cap\partial\Omega$
for some $Q\in\partial\Omega$ and $0<r\leq 1$. Assume that
\begin{equation*}
\begin{aligned}
 \mathcal{R}(F,f,g;r)& =  r\Big(\dashint_{B(Q,r)\cap\Omega}|F|^p\Big)^{1/p}
 + \|f\|_{L^\infty(B(Q,r)\cap\partial\Omega)}
 + r^\sigma [f]_{C^{0,\sigma}(B(Q,r)\cap\partial\Omega)}\\
 &+ \|g\|_{L^\infty(B(Q,r)\cap\partial\Omega)}
 + r^\sigma [g]_{C^{0,\sigma}(B(Q,r)\cap\partial\Omega)} <\infty,
\end{aligned}
\end{equation*}
where $p>d$ and $0<\sigma\leq\eta<1$. Then we have
\begin{equation}\label{pri:1.0}
  \sup_{B(Q,r/2)\cap\Omega}|\nabla u_\varepsilon|
  \leq C\bigg\{\frac{1}{r}\Big(\dashint_{B(Q,r)\cap\Omega}|u_\varepsilon|^2\Big)^{1/2}
  + \mathcal{R}(F,f,g;r)\bigg\},
\end{equation}
where $C$ depends on $\mu,\kappa,\tau,\lambda,m,d$ and the character of $\Omega$.
\end{thm}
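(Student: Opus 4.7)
The proof plan is to implement the three-scale compactness/iteration scheme of Avellaneda--Lin (as refined by Kenig--Lin--Shen in \cite{KFS1}), adapted to the nonhomogeneous operator $\mathcal{L}_\varepsilon$. After flattening $\partial\Omega$ near $Q$ by a $C^{1,\eta}$ diffeomorphism, I set $D_\rho = B(0,\rho)\cap\Omega$ (with $Q=0$) and monitor the excess quantity
\begin{equation*}
 \Phi(\rho) = \inf_{M,q}\frac{1}{\rho}\Big(\dashint_{D_\rho}|u_\varepsilon - Mx - q|^2\Big)^{1/2} + \mathcal{R}(F,f,g;\rho),
\end{equation*}
where the infimum ranges over $M\in\mathbb{R}^{m\times d}$ and $q\in\mathbb{R}^m$. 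The target bound $\sup_{D_{r/2}}|\nabla u_\varepsilon| \leq C\Phi(r)$ naturally splits into a macroscopic regime $\rho \geq \varepsilon$ (handled by homogenization) and a microscopic regime $\rho < \varepsilon$ (handled by rescaling and classical Schauder theory for the $C^{0,\tau}$ coefficients supplied by \eqref{a:4}).

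The core of the macroscopic step is a one-scale excess decay: there exists $\theta \in (0,1/4)$ such that whenever $\varepsilon \leq \theta\rho \leq r_0$, one has $\Phi(\theta\rho) \leq \tfrac{1}{2}\Phi(\rho)$. To prove it, I compare $u_\varepsilon$ on $D_\rho$ with the weak solution $u_0$ of the effective Neumann problem (constant leading coefficient $\widehat{A}$ and averaged lower-order terms) sharing the same interior data and the same conormal data on $B(0,\rho)\cap\partial\Omega$. The convergence-rate estimates of \cite{X2}, which exploit the usual corrector for $A(\cdot/\varepsilon)$ together with the additional flux/Neumann corrector generated by $V(\cdot/\varepsilon)$, yield
\begin{equation*}
 \frac{1}{\rho}\Big(\dashint_{D_\rho}|u_\varepsilon - u_0|^2\Big)^{1/2} \leq C \Big(\frac{\varepsilon}{\rho}\Big)^{\!\alpha}\Phi(\rho)
\end{equation*}
for some $\alpha>0$. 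Since the effective operator has constant leading coefficient and $\partial\Omega$ is $C^{1,\eta}$, standard boundary $C^{1,\sigma}$ regularity for $u_0$ gives $\Phi_0(\theta\rho) \leq C\theta^{\sigma}\Phi_0(\rho)$, where $\Phi_0$ is the analogue of $\Phi$ built from $u_0$. Choosing $\theta$ first to make $C\theta^{\sigma} \leq 1/4$ and then requiring $\varepsilon/\rho$ small converts this into the claimed $1/2$-contraction for $\Phi$.

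Iterating the decay from the initial scale $r$ down to the first dyadic scale $\rho_k \approx \varepsilon$ and summing the geometric $\mathcal{R}$-tail (using the natural scaling of the $L^p$ and $C^{0,\sigma}$ seminorms appearing in $\mathcal{R}$) yields $\Phi(\rho) \leq C\Phi(r)$ for every $\rho\in[\varepsilon, r]$. For $\rho<\varepsilon$, the rescaled function $v(y) = u_\varepsilon(\varepsilon y)/\varepsilon$ satisfies an $\varepsilon$-free system whose coefficients inherit the $C^{0,\tau}$ bounds \eqref{a:4}, so classical boundary Schauder estimates deliver a pointwise gradient bound; unwinding the scaling gives $\sup_{D_{r/2}}|\nabla u_\varepsilon| \leq C\Phi(\varepsilon)$, and combining this with $\Phi(\varepsilon) \leq C\Phi(r)$ yields \eqref{pri:1.0}. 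The main obstacle, and the novelty flagged in the abstract, is that in the nonhomogeneous case the convergence rate inside the one-scale step couples $\nabla u_\varepsilon$ with $u_\varepsilon$, so a naive single pass of the iteration produces only an intermediate $C^{0,\alpha}$-type estimate. A \emph{bootstrap} that reinjects this intermediate regularity as input to a second run of the comparison and convergence-rate argument is required to reach the full Lipschitz conclusion; together with the correct use of the Neumann corrector associated with $V(\cdot/\varepsilon)$ and the coercivity/solvability afforded by the hypothesis $\lambda\geq\lambda_0$, this is the heart of the proof.
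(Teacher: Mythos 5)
There is a genuine gap, and it is exactly the one the paper flags in its introduction: your excess functional $\Phi(\rho)=\inf_{M,q}\rho^{-1}\bigl(\dashint_{D_\rho}|u_\varepsilon-Mx-q|^2\bigr)^{1/2}+\mathcal{R}(\rho)$ optimizes over the constant $q$ as well as the slope $M$, and for the nonhomogeneous operator $\mathcal{L}_\varepsilon$ the one-step comparison estimate you claim,
\begin{equation*}
\frac{1}{\rho}\Big(\dashint_{D_\rho}|u_\varepsilon-u_0|^2\Big)^{1/2}\leq C\Big(\frac{\varepsilon}{\rho}\Big)^{\alpha}\Phi(\rho),
\end{equation*}
is simply false with this $\Phi$. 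The homogeneous operator $L_\varepsilon=-\mathrm{div}(A(x/\varepsilon)\nabla)$ annihilates constants, so in that setting the difference $u_\varepsilon-u_0$ can legitimately be measured against the excess after subtracting any $Mx+q$. Here, however, $\mathcal{L}_\varepsilon(q)=-\mathrm{div}(V(\cdot/\varepsilon)q)+c(\cdot/\varepsilon)q+\lambda q\neq 0$, so subtracting the optimizing $q$ introduces new source terms of size $|q|$ that are \emph{not} controlled by the excess. Concretely, if $u_\varepsilon$ is close to a large constant $q_0$ (with $F\approx(c+\lambda)q_0$), then $\|u_\varepsilon-u_0\|_{L^2(D_\rho)}\sim\varepsilon|q_0|$ (from the $\varepsilon\chi_0(x/\varepsilon)q_0$ oscillation), whereas $\Phi(\rho)\sim\rho|q_0|$, and $\varepsilon|q_0|/\rho\leq C(\varepsilon/\rho)^\alpha\rho|q_0|$ fails at the near-microscopic scales $\rho\approx 2\varepsilon$ you must pass through to reach the blow-up regime. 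So the iteration as you set it up does not close.

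The paper's proof avoids this by (i) \emph{not} optimizing over $q$: it fixes $\tilde{c}=u_0(0)$ (the effective solution's boundary value), and the Campanato functional $G(r,\cdot)$ takes an infimum over $M$ only, while carrying extra terms $r^2|M|$, $r^2(\dashint|Mx+\tilde{c}|^p)^{1/p}$ and the constant $|\tilde{c}|$ through the iteration; (ii) subtracting $\tilde{c}$ \emph{together with} the zeroth-order corrector, $v_\varepsilon=u_\varepsilon-\tilde{c}-\varepsilon\chi_0(x/\varepsilon)\tilde{c}$, so that the right-hand sides of equations \eqref{eq:3.1}--\eqref{eq:3.3} stay $O(\varepsilon|\tilde{c}|)$ rather than $O(|\tilde{c}|)$; this produces the revised approximating Lemma \ref{lemma:4.4} with the crucial extra term $r|\tilde{c}|$; and (iii) modifying the iteration lemma (Lemma \ref{lemma:4.2}) to carry an additional nonnegative function $c(r)$ (here $\|g\|_{L^\infty(\Delta_r)}+|\tilde{c}|$) along with $\Psi$ and $\psi=|M_r|$. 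At the end, $|\tilde{c}|=|u_0(0)|$ is bounded by $(\dashint_{D_{2r}}|u_\varepsilon|^2)^{1/2}$ plus data, which is precisely why the final statement \eqref{pri:1.0} carries $r^{-1}\|u_\varepsilon\|_{L^2}$ rather than an excess on the right-hand side. The paper's ``bootstrap'' is also not a second pass of the comparison argument: it is the local $W^{1,2}\to W^{1,p}$ ladder in Lemma \ref{lemma:2.2}, used to show that $(\dashint_{D_{2r}}|u_\varepsilon|^2)^{1/2}$ is uniformly controlled down to scale $\varepsilon$. Finally, you also do not address the $\mathrm{div}(f)$ term; the paper separates $u_\varepsilon=v_\varepsilon+w_\varepsilon$ and invokes the global Lipschitz estimate of \cite{X2} for the $w_\varepsilon$-part, since the convergence-rate machinery (Theorem \ref{thm:3.1}, Lemma \ref{lemma:4.1}) is only set up for right-hand sides $F$ and boundary data $g$, not for a divergence-form source.
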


In fact, the first author has developed the global Lipschitz estimate
in \cite[Theorem 1.2]{X1}. The main idea is to construct the connection between the solutions
corresponding to $L_\varepsilon=\text{div}(A(x/\varepsilon)\nabla)$ and $\mathcal{L}_\varepsilon$ via
the Neumann boundary corrector (see \cite[pp.4371]{X1}), such that the regularity results on $L_\varepsilon$ can be applied to
$\mathcal{L}_\varepsilon$ directly. Thus, his proof of the global Lipschitz estimate avoids the
the stated estimate $\eqref{pri:1.0}$.

Generally speaking, if there are the global estimates in our hand,
the corresponding boundary estimates will be obtained simply by
using the localization technique as in \cite[Lemma 2.17]{X1}.
Unfortunately, the estimate $\eqref{pri:1.0}$ can not be easily achieved in this way,
even for homogeneous operator $L_\varepsilon$. Because,
\begin{equation*}
L_\varepsilon(w_\varepsilon) = - \text{div}\big[A(x/\varepsilon)\nabla\phi u_\varepsilon\big]
- A(x/\varepsilon)\nabla u_\varepsilon\nabla\phi   \quad\text{in~} \Omega,
\end{equation*}
where $w_\varepsilon = u_\varepsilon\phi$,
and $u_\varepsilon$ satisfies $L_{\varepsilon}(u_\varepsilon) = 0$ in $\Omega$ with
$\phi\in C_0^1(\mathbb{R}^d)$ being a cut-off function. It is clear to see that the first term in the
right-hand side involves ``$A(x/\varepsilon)$'', which will produce a factor $\varepsilon^{-\sigma}$
in a H\"older semi-norm with the index $\sigma\in(0,1)$.
Obviously, we need an additional effort to conceal this factor and we have no plan to show the related
techniques in this direction. Instead, we want to prove the estimate $\eqref{pri:1.0}$ based upon
a convergence rate coupled with the so-called Campanato iteration.
This method has been well studied in \cite{AM,ASC,ASZ,S5} for periodic and nonperiodic settings.
Compared to the compactness argument shown in \cite{MAFHL,MAFHL3}, we are released from estimating
the boundary correctors, which is usually a very tough work.

The main idea in the proof of Theorem $\ref{thm:1.0}$ is similar to that in \cite{AM,S5},
but the nonhomogeneous operator $\mathcal{L}_\varepsilon$ will cause
some critical differences and technical difficulties.
For example, the solution $u_\varepsilon$ to $(\mathbf{NH_\varepsilon})$ is assured by given data.
It made us employ the following quantity
\begin{equation*}
\inf_{M\in\mathbb{R}^{d\times d}}\dashint_{B(Q,r)\cap\Omega}|u_\varepsilon - Mx - \tilde{c}|^2
\quad \text{instead of} \quad
\inf_{M\in\mathbb{R}^{d\times d}\atop
c\in\mathbb{R}^d}\dashint_{B(Q,r)\cap\Omega}|u_\varepsilon - Mx - c|^2
\end{equation*}
to carry out the iteration program, where $Q\in\partial\Omega$ and $\varepsilon\leq r<1$.
Moreover, $\tilde{c}$ may be given by $u_0(Q)$, which is the approximating solution to
$\mathcal{L}_0(u_0) = \mathcal{L}_\varepsilon(u_\varepsilon)$ in $B(Q,r)\cap\Omega$ with $\partial u_0/\partial\nu_0
= \partial u_\varepsilon/\partial\nu_\varepsilon$ on $\partial (B(Q,r)\cap\Omega)$.
Thus saying the solution assured means that
\begin{equation*}
 |\tilde{c}| \leq C\Big(\dashint_{B(Q,r)\cap\Omega}|u_0|^2\Big)^{1/2}
 \leq C\Big(\dashint_{B(Q,2r)\cap\Omega}|u_\varepsilon|^2\Big)^{1/2} + \text{give~data},
\end{equation*}
where we also use the following approximating result (see Lemma $\ref{lemma:4.1}$)
\begin{equation*}
\begin{aligned}
 \Big(\dashint_{B(Q,r)\cap\Omega} |u_\varepsilon - u_0|^2  \Big)^{1/2}
 \leq C\left(\frac{\varepsilon}{r}\right)^{\rho}
 \bigg\{ \Big(\dashint_{B(Q,2r)\cap\Omega}|u_\varepsilon|^2 \Big)^{1/2}
 + \text{given~data}\bigg\}
\end{aligned}
\end{equation*}
with some $\rho\in(0,1)$. In order to continue the iteration,
let $v_\varepsilon = u_\varepsilon - \tilde{c} - \varepsilon\chi_0(x/\varepsilon)\tilde{c}$ and
$v_0 = u_0 - \tilde{c}$, and then we give a revised approximating lemma (see Lemma $\ref{lemma:4.4}$), which says
\begin{equation*}
\begin{aligned}
 \Big(\dashint_{B(Q,r)\cap\Omega} |v_\varepsilon - v_0|^2  \Big)^{1/2}
 \leq C\left(\frac{\varepsilon}{r}\right)^{\rho}
 \bigg\{
 \Big(\dashint_{B(Q,2r)\cap\Omega}|u_\varepsilon-\tilde{c}|^2 \Big)^{1/2}
  + r|\tilde{c}| + \text{give~data}\bigg\}.
\end{aligned}
\end{equation*}
Here we remark that if we regard the constant $\tilde{c}$ as the given data,
and it will play a role as $F$ and $g$ (see for example Remark $\ref{re:4.1}$).
Thus, it is equivalent to $|\nabla u_\varepsilon|$ or $|\nabla^2 u_\varepsilon|$
in the sense of rescaling, and that is the reason why we have a factor ``$r$''
in front of the constant $|\tilde{c}|$, and this factor is very important in the later iterations.
Also, we made a few modification on the iteration lemma (see Lemma $\ref{lemma:4.2}$),
which has been proved by Z. Shen in \cite{S5},
originally by S. Armstrong, C. Smart in \cite{ASC}. Then
a routine computation leads to a large scale estimate,
\begin{equation*}
\Big(\dashint_{B(Q,r)\cap\Omega}|\nabla u_\varepsilon|^2\Big)^{1/2}
\leq C\bigg\{\Big(\dashint_{B(Q,1)\cap\Omega}|u_\varepsilon|^2\Big)^{1/2}
+ \Big(\dashint_{B(Q,2r)\cap\Omega}|u_\varepsilon|^2\Big)^{1/2}+\text{given~data}\bigg\}
\end{equation*}
for any $\varepsilon\leq r<1$. Obviously, the second term in the right-hand side requires a uniform
control with respect to the scale $r$, and it would be done by a local $W^{1,p}$ estimate with $p>2$,
which involves the so-called bootstrap argument.
Consequently, the proof of $\eqref{pri:1.0}$ will be completed by a blow-up argument.
However, there is a gap between the desired estimate $\eqref{a:2.1}$
and the stated estimate $\eqref{pri:1.0}$, and our only recourse is the Neumann boundary corrector here.
We refer the reader to Lemma $\ref{lemma:6.1}$ for the details. Also, we mention that
if the symmetry condition $A=A^*$ is additionally assumed, then the Neumann boundary corrector will have
a better estimate (see Remark $\ref{re:6.1}$).
Up to now, we have specified the key points in the proof of Theorem $\ref{thm:1.1}$
for $p\geq 2$. We mention that the proof in the case $1<p<2$ can not been derived by duality arguments.
For given boundary atom data $g$ in $L^2$ Neumann problem, we need to establish the following estimate
\begin{equation*}
\int_{\partial\Omega} (\nabla u_\varepsilon)^* \leq C
\end{equation*}
(see Theorem $\ref{thm:6.1}$), which is based upon the decay estimates of Neumann functions.
Since we have investigated the fundamental solutions of $\mathcal{L}_\varepsilon$ in \cite{X2},
this part of the proof may follow from those in \cite{KFS1,DK} without any real difficulty.

In terms of Lipschitz domains, the well-posedness of $(\mathbf{NH_\varepsilon})$ may be known
whenever $p$ is closed to $2$. For a $C^1$ domain,
whether Theorem $\ref{thm:1.1}$ is correct or not is still an open question,
while it is true for homogenized system $(\mathbf{NH_0})$,
and the reader may find a clue in \cite[Section 3]{X2}.
We mention that $L^p$ Dirichlet problem on $\mathcal{L}_\varepsilon$ has already been given
by \cite[Theorem 1.4]{X0} in $C^{1,\eta}$ domains. The assumption of $d\geq 3$ is not essential but
convenient to organize the paper.
Finally, without attempting to be exhaustive, we refer the reader to
\cite{ABJLGP,CP,DK,GF,GNF,GX,HMT,VSO,KS2,aKS1,NSX,S4,S1,ST1,ZVVPSE} and references therein for more results.

This paper is organized as follows. Some definitions
and known lemmas and the proof of Theorem $\ref{thm:2.1}$ are
introduced in section 2. We show a convergence rate in section 3.
Section 4 is devoted to study boundary estimates and we prove some decay estimates of
Neumann functions in section 5. The proof of Theorem $\ref{thm:1.1}$ is
consequently given in the last section.

\section{Preliminaries}

Define the correctors $\chi_k = (\chi_{k}^{\alpha\beta})$ with $0\leq k\leq d$, related to $\mathcal{L}_\varepsilon$ as follows:
\begin{equation}
\left\{ \begin{aligned}
 &L_1(\chi_k) = \text{div}(V)  \quad \text{in}~ \mathbb{R}^d, \\
 &\chi_k\in H^1_{per}(Y;\mathbb{R}^{m^2})~~\text{and}~\int_Y\chi_k dy = 0
\end{aligned}
\right.
\end{equation}
for $k=0$, and
\begin{equation}
 \left\{ \begin{aligned}
  &L_1(\chi_k^\beta + P_k^\beta) = 0 \quad \text{in}~ \mathbb{R}^d, \\
  &\chi_k^\beta \in H^1_{per}(Y;\mathbb{R}^m)~~\text{and}~\int_Y\chi_k^\beta dy = 0
 \end{aligned}
 \right.
\end{equation}
for $1\leq k\leq d$, where $P_k^\beta = x_k(0,\cdots,1,\cdots,0)$ with 1 in the
$\beta^{\text{th}}$ position, $Y = (0,1]^d \cong \mathbb{R}^d/\mathbb{Z}^d$, and $H^1_{per}(Y;\mathbb{R}^m)$ denotes the closure
of $C^\infty_{per}(Y;\mathbb{R}^m)$ in $H^1(Y;\mathbb{R}^m)$.
Note that $C^\infty_{per}(Y;\mathbb{R}^m)$ is the subset of $C^\infty(Y;\mathbb{R}^m)$,
which collects all $Y$-periodic vector-valued functions. By asymptotic expansion arguments
(see \cite[pp.103]{ABJLGP} or \cite[pp.31]{VSO}), we obtain the homogenized operator
\begin{equation}\label{eq:2.1}
 \mathcal{L}_0 = -\text{div}(\widehat{A}\nabla+ \widehat{V}) + \widehat{B}\nabla + \widehat{c} + \lambda I,
\end{equation}
where $\widehat{A} = (\widehat{a}_{ij}^{\alpha\beta})$, $\widehat{V}=(\widehat{V}_i^{\alpha\beta})$,
$\widehat{B} = (\widehat{B}_i^{\alpha\beta})$ and $\widehat{c}= (\widehat{c}^{\alpha\beta})$ are given by
\begin{equation}\label{eq:2.2}
\begin{aligned}
\widehat{a}_{ij}^{\alpha\beta} = \int_Y \big[a_{ij}^{\alpha\beta} + a_{ik}^{\alpha\gamma}\frac{\partial\chi_j^{\gamma\beta}}{\partial y_k}\big] dy, \qquad
\widehat{V}_i^{\alpha\beta} = \int_Y \big[V_i^{\alpha\beta} + a_{ij}^{\alpha\gamma}\frac{\partial\chi_0^{\gamma\beta}}{\partial y_j}\big] dy, \\
\widehat{B}_i^{\alpha\beta} = \int_Y \big[B_i^{\alpha\beta} + B_j^{\alpha\gamma}\frac{\partial\chi_i^{\gamma\beta}}{\partial y_j}\big] dy, \qquad
\widehat{c}^{\alpha\beta} = \int_Y \big[c^{\alpha\beta} + B_i^{\alpha\gamma}\frac{\partial\chi_0^{\gamma\beta}}{\partial y_i}\big] dy.
\end{aligned}
\end{equation}

\begin{remark}
\emph{It is well known that $u_\varepsilon\to u_0$ strongly in $L^2(\Omega;\mathbb{R}^m)$,
where $u_0\in H^1(\Omega;\mathbb{R}^m)$ satisfies the equation
\begin{equation*}
(\mathbf{NH_0})\left\{
\begin{aligned}
\mathcal{L}_0(u_0) &= 0 &\quad &\text{in}~~\Omega, \\
 \frac{\partial u_0}{\partial\nu_0} &= g & \quad &\text{on} ~\partial\Omega,
\end{aligned}\right.
\end{equation*}
where $\partial/\partial\nu_0 = n\cdot\big(\widehat{A}\nabla +\widehat{V}\big)$,
(see for example \cite[pp.4374-4375]{X1}).}
\end{remark}

\begin{definition}\label{def:1}
\emph{The nontangential maximal function of $u$ is defined by
\begin{equation*}
(u)^*(Q) = \sup\big\{|u(x)|:x\in\Gamma_{N_0}(Q)\big\}
\qquad \forall Q\in\partial\Omega,
\end{equation*}
where $\Gamma_{N_0}(Q) = \big\{x\in\Omega:|x-Q|\leq N_0\delta(x)\big\}$ is the cone with vertex
$Q$ and aperture $N_0$, and $N_0>1$ is sufficiently large.}
\end{definition}

\begin{lemma}\label{lemma:2.3}
Suppose that the coefficients of $\mathcal{L}_\varepsilon$ satisfies $\eqref{a:1}$ and $\eqref{a:3}$
with $A\in \emph{VMO}(\mathbb{R}^d)$.
Let $u_\varepsilon$ be the solution of $\mathcal{L}_\varepsilon(u_\varepsilon) = 0$ in $\Omega$.
Then we have the following estimate
\begin{equation}\label{pri:2.1}
 (u_\varepsilon)^*(Q) \leq C\mathrm{M}_{\partial\Omega}(\mathcal{M}(u_\varepsilon))(Q)
\end{equation}
for any $Q\in\partial\Omega$,
where $C$ depends only on $\mu,\kappa,\lambda,m,d$ and $\|A\|_{\emph{VMO}}$.
\end{lemma}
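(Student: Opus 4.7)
The plan is to reduce the pointwise bound on $(u_\varepsilon)^*(Q)$ to an interior $L^\infty$ estimate for solutions of $\mathcal{L}_\varepsilon u_\varepsilon=0$, and then convert the resulting solid $L^2$ average into the surface Hardy--Littlewood maximal function by a geometric averaging argument.

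The first step is the interior $L^\infty$ estimate: whenever $B(x,r)\subset\Omega$ and $\mathcal{L}_\varepsilon u_\varepsilon=0$ in $B(x,r)$, one should have
\begin{equation*}
|u_\varepsilon(x)|\leq C\Bigl(\dashint_{B(x,r)}|u_\varepsilon|^2\Bigr)^{1/2}
\end{equation*}
with $C$ depending only on $\mu,\kappa,\lambda,m,d$ and the $\mathrm{VMO}$ modulus of $A$. After moving the lower-order terms onto the right-hand side, this reduces to the interior $W^{1,p}$ estimate (for any $p<\infty$) for elliptic systems with $\mathrm{VMO}$ leading coefficients, i.e.\ the Caffarelli--Peral theory extended to systems. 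Choosing $p>d$ and invoking Morrey's embedding converts the $W^{1,p}$ bound into $L^\infty$. The constant is independent of $\varepsilon$ because the $\mathrm{VMO}$ modulus of $A(\cdot/\varepsilon)$ is controlled uniformly in $\varepsilon$ by that of $A$.

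Next comes the geometric averaging step. Fix $Q\in\partial\Omega$ and an arbitrary $x\in\Gamma_{N_0}(Q)$, and set $r:=\delta(x)$. Pick $P\in\partial\Omega$ with $|x-P|=r$, so that $|P-Q|\leq(N_0+1)r$. Applying the interior bound on $B(x,r/2)\subset\Omega$ gives
\begin{equation*}
|u_\varepsilon(x)|^2\leq C\dashint_{B(x,r/2)}|u_\varepsilon|^2\,dy.
\end{equation*}
For every $P'\in B(P,r/4)\cap\partial\Omega$ one has $B(x,r/2)\subset B(P',2r)\cap\Omega$, hence
\begin{equation*}
\Bigl(\dashint_{B(x,r/2)}|u_\varepsilon|^2\Bigr)^{1/2}
\leq C\Bigl(\dashint_{B(P',2r)\cap\Omega}|u_\varepsilon|^2\Bigr)^{1/2}\leq C\,\mathcal{M}(u_\varepsilon)(P').
\end{equation*}
Averaging the right-hand side over $P'\in B(P,r/4)\cap\partial\Omega$ and then enlarging the averaging set to $B(Q,(N_0+2)r)\cap\partial\Omega$ (which costs only a constant factor by the Ahlfors regularity of $\partial\Omega$ guaranteed by the Lipschitz hypothesis) produces
\begin{equation*}
|u_\varepsilon(x)|\leq C\dashint_{B(Q,(N_0+2)r)\cap\partial\Omega}\mathcal{M}(u_\varepsilon)\,d\sigma\leq C\,\mathrm{M}_{\partial\Omega}(\mathcal{M}(u_\varepsilon))(Q).
\end{equation*}
Taking the supremum over $x\in\Gamma_{N_0}(Q)$ delivers \eqref{pri:2.1}.

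The main obstacle is the interior $L^\infty$ bound under the weak hypothesis $A\in\mathrm{VMO}$: classical scalar Moser iteration is not directly available, so we must replace it by $W^{1,p}$ theory of Caffarelli--Peral type for systems, treating the zeroth and first order terms perturbatively on the right. Once that ingredient is in place, every remaining step is a routine volume-surface comparison that depends only on the Lipschitz character of $\Omega$ and the cone aperture $N_0$, so no further input from the homogenization structure of $\mathcal{L}_\varepsilon$ is required.
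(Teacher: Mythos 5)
Your overall architecture mirrors the paper's short proof: an $\varepsilon$-uniform interior $L^\infty$ estimate from \cite[Corollary 3.5]{X0} (which you correctly explain comes from $W^{1,p}$ theory for $\mathrm{VMO}$ coefficients with $p>d$ plus Morrey embedding, the $\mathrm{VMO}$ modulus being scale-invariant), followed by a geometric passage to the surface Hardy--Littlewood maximal function.

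The gap is in the inequality
\begin{equation*}
\Big(\dashint_{B(P',2r)\cap\Omega}|u_\varepsilon|^2\Big)^{1/2}\leq C\,\mathcal{M}(u_\varepsilon)(P').
\end{equation*}
The radial maximal function $\mathcal{M}(u_\varepsilon)(P')$ controls $|u_\varepsilon|$ only along the one-dimensional radial segment emanating from $P'$; it says nothing about the full $L^2$ average of $u_\varepsilon$ over the $d$-dimensional region $B(P',2r)\cap\Omega$, most of which lies far from that segment. There is no pointwise comparison of this type, even for solutions: a solution could vanish on the ray from $P'$ while being large on the rays from neighboring boundary points, in which case the left side is big and the right side is zero. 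Averaging over $P'$ afterward does not repair this, because the inequality you average is already false for an individual $P'$.

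The correct version of this step replaces the pointwise bound by a Fubini argument. First upgrade the interior estimate to an $L^1$ (or $L^s$ with small $s$) average, which costs nothing for solutions by the usual self-improving iteration; this gives $|u_\varepsilon(x)|\leq C\dashint_{B(x,r/2)}|u_\varepsilon|$. Then observe that for every $y\in B(x,r/2)$ one has $|u_\varepsilon(y)|\leq\mathcal{M}(u_\varepsilon)(\pi(y))$, where $\pi(y)$ is the point of $\partial\Omega$ from which $y$ lies on the radial segment. Integrating in $y$ over $B(x,r/2)$ in Lipschitz graph coordinates, Fubini produces $\dashint_{B(x,r/2)}|u_\varepsilon|\leq C\dashint_{B(Q,c_0r)\cap\partial\Omega}\mathcal{M}(u_\varepsilon)\,dS$, which is precisely the middle inequality in the paper's proof and now yields $\mathrm{M}_{\partial\Omega}(\mathcal{M}(u_\varepsilon))(Q)$ as you intended. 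The rest of your geometric bookkeeping (choice of $P$, comparison of ball and surface-ball radii, Ahlfors regularity of $\partial\Omega$) is fine once this step is repaired.
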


\begin{remark}
\emph{The definition of $\text{VMO}(\mathbb{R}^d)$ may be found in \cite[pp.43]{S4}, and
the radial maximal function operator $\mathcal{M}$ is defined in \cite[Remark 2.21]{X1}.}
\end{remark}

\begin{proof}
Fixed $x\in\Lambda_{N_0}(Q)$,
the estimate $\eqref{pri:2.1}$ is based upon the interior estimate (see \cite[Corollary 3.5]{X0})
\begin{equation*}
\begin{aligned}
|u_\varepsilon(x)|
&\leq C\Big(\dashint_{B(x,r)}|u_\varepsilon|^2\Big)^{1/2} \\
& \leq C\dashint_{B(Q,c_0 r)\cap\partial\Omega}|\mathcal{M}(u_\varepsilon)|
\leq C\mathrm{M}_{\partial\Omega}(\mathcal{M}(u_\varepsilon))(Q),
\end{aligned}
\end{equation*}
where $r=\text{dist}(x,\partial\Omega)$, and $c_0>0$ is determined by $N_0$.
\end{proof}

\begin{lemma}\label{lemma:2.4}
Let $\Omega\subset\mathbb{R}^d$ be a bounded
Lipschitz domain, and $\mathcal{M}$ be defined
as the radical maximal function operator. Then
for any $h\in H^{1}(\Omega)$, we have the following estimate
\begin{equation}\label{pri:2.2}
\|\mathcal{M}(h)\|_{L^p(\partial\Omega)}
\leq C\|h\|_{W^{1,p}(\Omega)}
\end{equation}
where $C$ depends only on $d$ and
the character of $\Omega$.
\end{lemma}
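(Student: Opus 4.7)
The plan is to reduce the estimate to the classical $L^p$-theory of the Hardy--Littlewood maximal operator on $\mathbb{R}^d$ via a Sobolev extension, and then extract the boundary bound through a trace inequality.

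First I would extend $h$ by the Stein/Jones extension operator for Lipschitz domains to obtain $\tilde h\in W^{1,p}(\mathbb{R}^d)$ with $\tilde h=h$ on $\Omega$ and $\|\tilde h\|_{W^{1,p}(\mathbb{R}^d)}\leq C\|h\|_{W^{1,p}(\Omega)}$. Since $\Omega$ is a bounded Lipschitz domain it satisfies the interior corkscrew condition, giving the measure-density bound $|B(Q,r)\cap\Omega|\geq c_0\,|B(Q,r)|$ uniformly in $Q\in\partial\Omega$ and $0<r<r_0$. Consequently,
\begin{equation*}
\dashint_{B(Q,r)\cap\Omega}|h|\,dx
\leq \frac{|B(Q,r)|}{|B(Q,r)\cap\Omega|}\,\dashint_{B(Q,r)}|\tilde h|\,dx
\leq C\,M\tilde h(Q),
\end{equation*}
where $M$ denotes the classical Hardy--Littlewood maximal operator on $\mathbb{R}^d$; taking the supremum in $r$ yields the pointwise domination $\mathcal{M}(h)(Q)\leq C\,M\tilde h(Q)$ for every $Q\in\partial\Omega$.

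The endgame is to invoke Kinnunen's regularity theorem, which asserts that $M$ is bounded on $W^{1,p}(\mathbb{R}^d)$ for $1<p<\infty$. This yields $\|M\tilde h\|_{W^{1,p}(\mathbb{R}^d)}\leq C\|\tilde h\|_{W^{1,p}(\mathbb{R}^d)}$, after which the standard trace inequality for Lipschitz domains supplies $\|M\tilde h\|_{L^p(\partial\Omega)}\leq C\|M\tilde h\|_{W^{1,p}(\mathbb{R}^d)}$. Chaining these three inequalities delivers the claimed estimate $\|\mathcal{M}(h)\|_{L^p(\partial\Omega)}\leq C\|h\|_{W^{1,p}(\Omega)}$.

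The main obstacle is that Kinnunen's theorem requires $p>1$; this matches the range $1<p<\infty$ needed downstream in Theorem \ref{thm:1.1}, so it is no impediment here, while the endpoint $p=1$ would demand a separate weak-type argument. A more elementary, self-contained alternative is to establish, in a Lipschitz boundary chart, the pointwise inequality
\begin{equation*}
\dashint_{B(Q,r)\cap\Omega}|h|\,dx
\leq C\dashint_{B(Q,Cr)\cap\partial\Omega}|h|\,dS + C\,r\dashint_{B(Q,Cr)\cap\Omega}|\nabla h|\,dx
\end{equation*}
by integrating along the vertical coordinate of the Lipschitz graph and applying the fundamental theorem of calculus; the $L^p(\partial\Omega)$ estimate then follows by combining the $L^p$-boundedness of the Hardy--Littlewood maximal operator on $\partial\Omega$ with a Riesz-potential bound on the surface to absorb the gradient term. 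Either route closes the argument.
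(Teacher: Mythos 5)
The paper's own proof is a one-line pointer to {\rm[X1, Lemma 2.24]}, so there is no internal argument to compare against; your proposal supplies one from scratch, and the first route you describe (Stein/Jones extension, measure density, pointwise domination by the Hardy--Littlewood maximal function $M$, Kinnunen's $W^{1,p}$-boundedness of $M$, trace inequality) is correct for $1<p<\infty$ under the standard interpretation of the radial maximal operator. Two things to verify and one small caveat. First, the pointwise bound $\mathcal{M}(h)(Q)\leq C\,M\tilde h(Q)$ is what actually carries the proof, and it requires that the radial maximal operator of {\rm[X1, Remark 2.21]} be defined via solid averages over subregions of $B(Q,Cr)\cap\Omega$ of volume $\gtrsim r^d$; if $\mathcal{M}$ were a pointwise supremum along a ray, that inequality would fail for a general $h\in W^{1,p}$ with $p\le d$, so you should confirm the definition before leaning on it. Second, the statement's ``$h\in H^1(\Omega)$'' is surely shorthand for $h\in W^{1,p}(\Omega)$ (as the right-hand side $\|h\|_{W^{1,p}(\Omega)}$ makes clear), and the resulting constant necessarily depends on $p$ as well as $d$ and the Lipschitz character; this is harmless but worth stating since the lemma is invoked downstream for varying $p$. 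The Kinnunen route is heavier machinery than the task demands, whereas your second, more elementary route (integrating along the vertical coordinate in a Lipschitz chart, then controlling the resulting surface maximal function and Riesz potential) is most likely what {\rm[X1, Lemma 2.24]} does; however, as written it is underspecified at exactly the step that matters, since ``a Riesz-potential bound on the surface to absorb the gradient term'' needs to be quantified --- one must show that $Q\mapsto\sup_{r}r\dashint_{B(Q,Cr)\cap\Omega}|\nabla h|$ is bounded $L^p(dx)\to L^p(dS)$ for the surface measure $dS$, which is an Adams-type trace estimate for the fractional maximal operator $\mathcal{M}_1$ and deserves an explicit citation or proof. Either route closes the argument once those details are pinned down.
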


\begin{proof}
It would be done by a few modification to the proof \cite[Lemma 2.24]{X1}.
\end{proof}

\begin{remark}
\emph{For the ease of the statement, we introduce the following notation.
\begin{equation*}
\begin{aligned}
D(Q,r) &= B(Q,r)\cap \Omega = \big\{(x^\prime,x_d)\in\mathbb{R}^d:|x^\prime|<r
~\text{and}~ \psi(x^\prime) < x_d < C_0r\big\},\\
\Delta(Q,r) &= B(Q,r)\cap \partial\Omega = \big\{(x^\prime,x_d)\in\mathbb{R}^d:|x^\prime|<r\big\},
\end{aligned}
\end{equation*}
where $\psi:\mathbb{R}^{d-1}\to\mathbb{R}$ is a Lipschitz or $C^{1,\eta}$ function.
We usually denote $D(Q,r)$ and $\Delta(Q,r)$ by $D_r$ and $\Delta_r$.}
\end{remark}

\begin{lemma}[A real method]\label{lemma:2.1}
Let $S_0$ be a cube of $\partial \Omega$ and $F\in L^2(2S_0)$. Let $p>2$ and $f\in L^q(2S_0)$ for some
$2<q<p$. Suppose that for each dyadic subcube $S$ of $S_0$ with $|S|$ with $|S|\leq \beta|S_0|$, there
exist two functions $F_S$ and $R_S$ on $2S$ such that $|F|\leq |F_S|+|R_S|$ on $2S$, and
\begin{equation}\label{pri:2.6}
\bigg\{\dashint_{2S}|R_S|^p\bigg\}^{1/p}
\leq C_1\bigg\{\Big(\dashint_{\alpha S}|F|^2\Big)^{1/2}
+\sup_{S^\prime\supset\supset S}\Big(\dashint_{S^\prime}|f|^2\Big)^{1/2}\bigg\},
\end{equation}
\begin{equation}\label{pri:2.7}
\dashint_{2S}|F_S|^2 \leq C_2 \sup_{S^\prime\subset S}\dashint_{S^\prime}|f|^2,
\end{equation}
where $C_1,C_2$ and $0<\beta<1<\alpha$. Then
\begin{equation}\label{pri:2.8}
\bigg\{\dashint_{S_0}|F|^q\bigg\}^{1/q}
\leq C\bigg\{\Big(\dashint_{2S_0}|F|^2\Big)^{1/2}+\Big(\dashint_{2S_0}|f|^q\Big)^{1/q}\bigg\},
\end{equation}
where $C>0$ depends only on $p,q,C_1,C_2,\alpha,\beta,d$ and the character of $\Omega$.
\end{lemma}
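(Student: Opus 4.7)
\medskip

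\noindent\textbf{Proof proposal.} The plan is to follow Shen's real variable scheme (a refined good-$\lambda$ argument in the spirit of Calder\'on--Zygmund), which is tailored exactly for decompositions of the form $F=F_S+R_S$ obeying a reverse H\"older type bound on $R_S$ and an $L^2$ control on $F_S$ by the datum $f$. The new wrinkle in the present version is the appearance of the second term on the right-hand side of \eqref{pri:2.6} involving $\sup_{S'\supset\supset S}\bigl(\dashint_{S'}|f|^2\bigr)^{1/2}$, but this only strengthens what would otherwise be a purely $F$-dependent reverse H\"older inequality, so it can be carried through the iteration essentially for free.

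First I would normalize and set $\lambda_0=\gamma\bigl\{(\dashint_{2S_0}|F|^2)^{1/2}+(\dashint_{2S_0}|f|^q)^{1/q}\bigr\}$ with $\gamma$ large, and introduce the surface-localized dyadic Hardy--Littlewood maximal function $\mathrm{M}_{S_0}$ on $\partial\Omega$. For $\lambda>\lambda_0$, apply the Calder\'on--Zygmund stopping-time decomposition to $\mathrm{M}_{S_0}(|F|^2)$ at level $(A\lambda)^2$, producing a maximal family of pairwise disjoint dyadic subcubes $\{S_k\}\subset S_0$ with $(A\lambda)^2<\dashint_{S_k}|F|^2\leq 2^d(A\lambda)^2$ and $|F|\leq A\lambda$ a.e.\ off $\bigcup S_k$. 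Choosing $\gamma$ large forces $|S_k|\leq\beta|S_0|$ so that the hypotheses apply on each $2S_k$.

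Next, on every selected cube I would split
\[
\bigl|\{x\in S_k:|F|(x)>2A\lambda\}\bigr|\ \leq\ \bigl|\{|F_{S_k}|>A\lambda\}\bigr|+\bigl|\{|R_{S_k}|>A\lambda\}\bigr|.
\]
Chebyshev with \eqref{pri:2.7} gives the first term $\lesssim (A\lambda)^{-2}|S_k|\,\sup_{S'\subset S_k}\dashint_{S'}|f|^2$, while Chebyshev with \eqref{pri:2.6} gives the second term $\lesssim (A\lambda)^{-p}|S_k|\bigl\{(\dashint_{\alpha S_k}|F|^2)^{p/2}+\sup_{S'\supset\supset S_k}(\dashint_{S'}|f|^2)^{p/2}\bigr\}$. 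The key gain is that, because $S_k$ is \emph{maximal}, the parent cube satisfies $\dashint_{\widetilde S_k}|F|^2\leq(A\lambda)^2$, and by comparability $\dashint_{\alpha S_k}|F|^2\leq C(A\lambda)^2$; thus the first summand inside the braces contributes $(A\lambda)^{-p}\cdot(A\lambda)^p=1$, which becomes small after choosing $A$ large. Summing over $k$ and absorbing, one arrives at a good-$\lambda$ inequality of the form
\[
\bigl|\{x\in S_0:|F|(x)>2A\lambda\}\bigr|\ \leq\ \tfrac12\bigl|\{|F|>\lambda\}\bigr|+C\bigl|\{\mathrm{M}_{S_0}(|f|^2)>c\lambda^2\}\bigr|,
\]
valid for all $\lambda>\lambda_0$.

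Finally, multiplying by $q\lambda^{q-1}$ and integrating in $\lambda\in(\lambda_0,\infty)$ yields
\[
\int_{S_0}|F|^q\ \leq\ C\lambda_0^{q}|S_0|+C\int_{S_0}\bigl[\mathrm{M}_{S_0}(|f|^2)\bigr]^{q/2},
\]
and the strong $(q/2,q/2)$ bound for $\mathrm{M}_{S_0}$ (valid since $q>2$) converts the last term into $\|f\|_{L^q}^q$; plugging in the definition of $\lambda_0$ produces \eqref{pri:2.8}. The delicate step that I expect to require the most care is the passage between the four cube sizes $S_k\subset 2S_k\subset\alpha S_k\subset\widetilde S_k$: one must verify that the doubling of $\partial\Omega$ (with constants depending only on the Lipschitz character) lets the maximality hypothesis $\dashint_{\widetilde S_k}|F|^2\leq(A\lambda)^2$ dominate $\dashint_{\alpha S_k}|F|^2$ with a constant independent of $\lambda$ and $S_0$. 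Once this comparability is fixed, the rest is the standard good-$\lambda$ mechanism.
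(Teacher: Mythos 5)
Your proposal follows the same Calder\'on--Zygmund / good-$\lambda$ route as the reference the paper cites for this lemma (\cite[Lemma 2.2]{aKS1}, itself modeled on Caffarelli--Peral and Shen's real-variable scheme). The stopping-time decomposition at level $(A\lambda)^2$, the $R_S$/$F_S$ split via Chebyshev, the comparability of $\dashint_{\alpha S_k}|F|^2$ with the parent-cube average (correctly flagged as the delicate point), the absorption enabled by $p>q>2$, and the final $L^{q/2}$ boundedness of the maximal operator are all the right ingredients, in the right order.

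There is, however, one genuine gap in the step where you pass from the per-cube Chebyshev bounds to the good-$\lambda$ inequality $|\{|F|>2A\lambda\}|\le\tfrac12|\{|F|>\lambda\}|+C|\{\mathrm{M}_{S_0}(|f|^2)>c\lambda^2\}|$. Chebyshev with \eqref{pri:2.7} produces the contribution $(A\lambda)^{-2}|S_k|\,\sup_{S'\subset S_k}\dashint_{S'}|f|^2$. To feed this into the $\mathrm{M}_{S_0}(|f|^2)$ term of the good-$\lambda$ inequality you would need $\sup_{S'\subset S_k}\dashint_{S'}|f|^2$ to be dominated by $\mathrm{M}_{S_0}(|f|^2)(x)$ for a \emph{single} $x\in S_k$ lying in the good set $\{\mathrm{M}_{S_0}(|f|^2)\le(\gamma\lambda)^2\}$; but a subcube $S'\subset S_k$ realizing the supremum need not contain that $x$, so no such pointwise domination holds. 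Worse, for $f\in L^q\setminus L^\infty$ the quantity $\sup_{S'\subset S_k}\dashint_{S'}|f|^2$ is generically infinite, so \eqref{pri:2.7} as printed is essentially vacuous. This is almost certainly a typo in the paper: in \cite[Lemma 2.2]{aKS1}, as in Shen's other formulations of the real-variable argument, the $F_S$-hypothesis reads $\sup_{S'\supset\supset S}\dashint_{S'}|f|^2$ over \emph{super}cubes, and then $S'\supset S_k\ni x$ immediately gives $\dashint_{S'}|f|^2\le \mathrm{M}_{S_0}(|f|^2)(x)\le(\gamma\lambda)^2$ on the good set, exactly as you use for the $\sup_{S'\supset\supset S_k}$ term coming from \eqref{pri:2.6}. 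You should note this discrepancy explicitly: with $\supset\supset$ in place of $\subset$ your sketch closes; with $\subset$ the argument, as written, does not.
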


\begin{proof}
See for example \cite[Lemma 2.2]{aKS1}.
\end{proof}

\noindent\textbf{Proof of Theorem $\ref{thm:2.1}$}.
The main idea may be found in \cite[Lemma 9.2]{KFS2}, and
we make some modifications in the original proof to fit the case of nonhomogeneous operators.
To show the stated result, on account of a covering argument, it suffices to prove the following estimate
\begin{equation}\label{pri:2.5}
\bigg\{\dashint_{\Delta(Q,r)} |(\nabla u_\varepsilon)^*|^p dS\bigg\}^{1/p}
\leq C\bigg\{\dashint_{\Delta(Q,2r)}\Big(|(\nabla u_\varepsilon)^*|^2
+|(u_\varepsilon)^*|^2\Big)dS\bigg\}^{1/2}
+  C\bigg\{\dashint_{\Delta(Q,2r)}
|g|^pdS\bigg\}^{1/p}
\end{equation}
for any $0<r<r_0$, and it will accomplished by a real variable method originating in \cite{CP}
and further developed in \cite{S1,S2,S3}. Precisely speaking, we will apply Lemma $\ref{lemma:2.1}$ to our case.

Let $\chi_{\Delta_{8r}}$ represent the characteristic function of a set $\Delta_{8r}\subset\partial\Omega$,
where $r\in(0,r_0/100)$. Define $f = g\chi_{\Delta_{8r}}$, and
then we consider $u_\varepsilon = v_\varepsilon + w_\varepsilon$, in which
$v_\varepsilon$ and $w_\varepsilon$ satisfy $L^2$ Neumann problems
\begin{equation*}
(\text{I})\left\{
\begin{aligned}
\mathcal{L}_\varepsilon(v_\varepsilon) &=0 &\quad& \text{in}~\Omega,\\
\frac{\partial v_\varepsilon}{\partial\nu_\varepsilon} &= f  &\quad& \text{on}~\partial\Omega,
\end{aligned}
\right.\qquad
(\text{II})\left\{\begin{aligned}
\mathcal{L}_\varepsilon(w_\varepsilon) &=0 &\quad& \text{in}~\Omega,\\
\frac{\partial w_\varepsilon}{\partial\nu_\varepsilon} &= (1-\chi_{\Delta_{8r}})g
&\quad& \text{on}~\partial\Omega,
\end{aligned}
\right.
\end{equation*}
respectively.

For (\text{I}). It follows from the $L^2$ solvability (see \cite[Theorem 1.5]{X2}) that
\begin{equation*}
\dashint_{\Delta_{r}} |(\nabla v_\varepsilon)^*|^2
\leq \frac{C}{r^{d-1}}\int_{\partial\Omega} |(\nabla v_\varepsilon)^*|^2 dS
\leq \frac{C}{r^{d-1}}\int_{\partial\Omega} |f|^2 dS
\leq C\dashint_{\Delta_{8r}} |g|^2.
\end{equation*}
On the other hand, in view of the estimates $\eqref{pri:2.1}$ and $\eqref{pri:2.2}$, we have
\begin{equation*}
\dashint_{\Delta_{r}} |(v_\varepsilon)^*|^2
\leq C\dashint_{\Delta_{r}} |\mathcal{M}(v_\varepsilon)|^2
\leq \frac{C}{r^{d-1}}\int_{\Omega}\big(|\nabla v_\varepsilon|^2 + |v_\varepsilon|^2\big)dx
\leq  C\dashint_{\Delta_{r}} |g|^2.
\end{equation*}

Let $F_S = (\nabla v_\varepsilon)^* + (v)^*$, and combining the above two inequalities leads to
\begin{equation}\label{f:2.1}
 \dashint_{\Delta_{r}}|F_S|^2 \leq C \dashint_{\Delta_{8r}} |g|^2.
\end{equation}
This gives the estimate $\eqref{pri:2.7}$ in Lemma $\ref{lemma:2.1}$.

Observing (\text{II}), we have that $w_\varepsilon\in H^1(D_{3r};\mathbb{R}^m)$ satisfies
$\mathcal{L}_\varepsilon(w_\varepsilon) = 0$ in $D_{3r}$ with
$\partial w_\varepsilon/\partial\nu_\varepsilon = 0$ on $\Delta_{3r}$. Hence, it follows from the
reverse H\"older assumption $\eqref{a:2.1}$ that
\begin{equation}\label{f:2.2}
\begin{aligned}
\bigg(\dashint_{\Delta_r} |(\nabla w_\varepsilon)^*|^p \bigg)^{1/p}
&\leq C\bigg\{\dashint_{\Delta_{2r}} |(\nabla w_\varepsilon)^*
+(w_\varepsilon)^*|^2 \bigg\}^{1/2}\\
&\leq C\bigg\{\dashint_{\Delta_{2r}} |(\nabla u_\varepsilon)^*
+(u_\varepsilon)^*|^2 \bigg\}^{1/2} + C\bigg\{\dashint_{\Delta_{2r}} |F_S|^2\bigg\}^{1/2}\\
&\leq C\bigg\{\dashint_{\Delta_{2r}} |(\nabla u_\varepsilon)^*
+(u_\varepsilon)^*|^2 \bigg\}^{1/2} + C\bigg\{\dashint_{\Delta_{8r}} |g|^2\bigg\}^{1/2}.
\end{aligned}
\end{equation}
Meanwhile, by the boundary $L^\infty$ estimate $\eqref{pri:2.3}$ and \cite[Corollary 3.5]{X0}, one may have
\begin{equation}\label{f:2.3}
\begin{aligned}
\bigg\{\dashint_{\Delta_r}|(w_\varepsilon)^*|^p\bigg\}^{1/p}
&\leq C\bigg\{\dashint_{D_{2r}}|w_\varepsilon|^2\bigg\}^{1/2}
\leq C\bigg\{\dashint_{\Delta_{2r}}|(w_\varepsilon)^*|^2\bigg\}^{1/2} \\
&\leq C\bigg\{\dashint_{\Delta_{2r}}|(u_\varepsilon)^*|^2\bigg\}^{1/2}
 + C\bigg\{\dashint_{\Delta_{2r}}|(v_\varepsilon)^*|^2\bigg\}^{1/2} \\
&\leq C\bigg\{\dashint_{\Delta_{2r}} |(\nabla u_\varepsilon)^*
+(u_\varepsilon)^*|^2 \bigg\}^{1/2} + C\bigg\{\dashint_{\Delta_{8r}} |g|^2\bigg\}^{1/2},
\end{aligned}
\end{equation}
where we also use the estimate $\eqref{f:2.1}$ in the last inequality.

Let $R_S = (\nabla w_\varepsilon)^* + (w_\varepsilon)^*$, and it follows from the estimates
$\eqref{f:2.2}$ and $\eqref{f:2.3}$ that
\begin{equation}
\bigg\{\dashint_{\Delta_r}|R_S|^p\bigg\}^{1/p}
\leq C\bigg\{\dashint_{\Delta_{2r}} |F|^2 \bigg\}^{1/2} + C\bigg\{\dashint_{\Delta_{8r}} |g|^2\bigg\}^{1/2},
\end{equation}
where $F=(\nabla u_\varepsilon)^*+(u_\varepsilon)^*$, and this gives
the estimate $\eqref{pri:2.6}$. Thus, it is clear to see that
$F\leq F_S + R_S$ on $\partial\Omega$, and in terms of Lemma $\ref{lemma:2.1}$ we may have
\begin{equation}
\bigg\{\dashint_{\Delta_r}|F|^q\bigg\}^{1/q}
\leq C\bigg\{\dashint_{\Delta_{2r}} |F|^2 \bigg\}^{1/2} + C\bigg\{\dashint_{\Delta_{2r}} |g|^2\bigg\}^{1/2}.
\end{equation}
for any $2<q<p$, where we also employ a simple covering argument.
This implies the stated estimate $\eqref{pri:2.5}$, and we have completed the proof.
\qed

\begin{lemma}
Let $\Omega\subset\mathbb{R}^d$ be a bounded Lipschitz domain.
Suppose $A$ satisfies $\eqref{a:1}$. Let $u_\varepsilon\in H^1(\Omega;\mathbb{R}^m)$ be a weak solution
to $\mathcal{L}_\varepsilon(u_\varepsilon) = F$ in $\Omega$ and
$\partial u_\varepsilon /\partial\nu_\varepsilon = g$ on $\partial\Omega$, where
$F\in L^2(\Omega;\mathbb{R}^m)$ and $g\in L^2(\partial\Omega;\mathbb{R}^m)$. Then where exists
$p>2$ depending on $\mu,d$ and the character of $\Omega$, such that
\begin{equation}\label{pri:2.13}
\|\nabla u_\varepsilon\|_{L^p(\Omega)}
\leq C\Big\{\|F\|_{L^2(\Omega)}+\|g\|_{L^2(\partial\Omega)}\Big\},
\end{equation}
where $C$ depends on $\mu,\kappa,d,m$ and $\Omega$.
\end{lemma}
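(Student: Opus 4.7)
The plan is the classical Meyers--Gehring higher-integrability argument: establish a reverse Hölder inequality for $|\nabla u_\varepsilon|$ at both interior balls and boundary half-balls, then invoke a self-improving lemma of Gehring type to promote $L^2$ integrability to $L^p$ for some $p>2$ depending only on $\mu$, $d$ and the Lipschitz character of $\Omega$. The lower-order coefficients $V,B,c$ are merely $L^\infty$-bounded and will be absorbed in the Caccioppoli estimate via Young's inequality with $\epsilon$, at the cost of an auxiliary $\int|u_\varepsilon|^2$ term that I will control by a global $H^1$ energy bound.

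The first step is the energy estimate $\|u_\varepsilon\|_{H^1(\Omega)} \leq C\{\|F\|_{L^2(\Omega)}+\|g\|_{L^2(\partial\Omega)}\}$, obtained by testing the weak formulation with $\varphi=u_\varepsilon$ and invoking ellipticity, the bounds \eqref{a:3}, Young's inequality, and the trace theorem. The second step is the reverse Hölder inequality. Fix $Q\in\overline\Omega$ and $r$ small, let $\phi$ be a cut-off with $\phi\equiv 1$ on $B(Q,r)$ and $\mathrm{supp}(\phi)\subset B(Q,2r)$, and let $\bar u$ be the mean of $u_\varepsilon$ over $D(Q,2r)$. Testing the equation with $\varphi=\phi^2(u_\varepsilon-\bar u)$, using ellipticity on the left and Cauchy's inequality with $\epsilon$ (absorbing gradient pieces into the ellipticity constant) yields the Caccioppoli-type estimate
\begin{equation*}
\dashint_{D_r}|\nabla u_\varepsilon|^2 \leq \frac{C}{r^2}\dashint_{D_{2r}}|u_\varepsilon-\bar u|^2 + C\dashint_{D_{2r}}|u_\varepsilon|^2 + C r^2 \dashint_{D_{2r}}|F|^2 + \frac{C}{r}\dashint_{\Delta_{2r}}|g|^2,
\end{equation*}
where the boundary contribution $\int_{\Delta_{2r}} g\phi^2(u_\varepsilon-\bar u)$ is handled by the scaled trace inequality $\|u_\varepsilon-\bar u\|_{L^2(\Delta_{2r})}^2\leq Cr\|\nabla u_\varepsilon\|_{L^2(D_{2r})}^2+(C/r)\|u_\varepsilon-\bar u\|_{L^2(D_{2r})}^2$ followed by Cauchy--Schwarz and absorption. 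Applying Sobolev--Poincaré with the sub-$2$ exponent $2_\ast=2d/(d+2)$ to the first term on the right converts this into the reverse Hölder form
\begin{equation*}
\Bigl(\dashint_{D_r}|\nabla u_\varepsilon|^2\Bigr)^{1/2} \leq C\Bigl(\dashint_{D_{2r}}|\nabla u_\varepsilon|^{2_\ast}\Bigr)^{1/2_\ast} + H(Q,r),
\end{equation*}
with $H(Q,r)$ aggregating the contributions of $F$, $g$ and $u_\varepsilon$ at that scale. An analogous inequality with no boundary term holds at interior balls.

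Finally, a Gehring-type self-improving lemma on the bounded Lipschitz domain $\Omega$ (see for instance Giaquinta--Modica, or the boundary versions in Shen) promotes these reverse Hölder inequalities into $\nabla u_\varepsilon\in L^p$ for some $p>2$ depending only on $\mu$, $d$ and the character of $\Omega$, together with
\begin{equation*}
\|\nabla u_\varepsilon\|_{L^p(\Omega)} \leq C\bigl\{\|\nabla u_\varepsilon\|_{L^2(\Omega)} + \|u_\varepsilon\|_{L^2(\Omega)} + \|F\|_{L^2(\Omega)} + \|g\|_{L^2(\partial\Omega)}\bigr\}.
\end{equation*}
Combining with the $H^1$ bound from the first step yields \eqref{pri:2.13}. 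The main obstacle is the $\int|u_\varepsilon|^2$ term in the Caccioppoli inequality arising from the first-order couplings $V,B$: it cannot be cancelled by Poincaré in the Neumann setting (where $u_\varepsilon$ admits no natural mean-value reference compatible with the boundary term), so it has to be carried through the iteration and reabsorbed into $\|F\|_{L^2}+\|g\|_{L^2}$ only at the end via the global energy estimate.
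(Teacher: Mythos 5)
Your proposal is correct and fills in the argument the paper cites but does not spell out. The paper's proof first notes that when $A$ additionally satisfies the periodicity condition \eqref{a:2} and a VMO regularity condition, one can simply invoke the full-range $W^{1,p}$ estimate of \cite[Lemma 3.3]{X1} (with data in $L^q$ and $B^{-1/p,p}(\partial\Omega)$) and then choose $p>2$ close to $2$ so that $L^2(\Omega)\subset L^q(\Omega)$ and $L^2(\partial\Omega)\subset B^{-1/p,p}(\partial\Omega)$; it then remarks that \emph{without} those structural assumptions the same conclusion still holds for $|1/p-1/2|<\delta$ by ``a real method and reverse H\"older inequality,'' citing \cite[Theorem 1.1.4]{S4} and leaving the argument to the reference. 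Your Meyers--Gehring proof is exactly that unreproduced argument: a boundary Caccioppoli estimate plus Sobolev--Poincar\'e yields a weak reverse H\"older inequality for $|\nabla u_\varepsilon|$ with exponent $2_\ast=2d/(d+2)$, and a boundary Gehring-type self-improving lemma on Lipschitz domains bootstraps this to $L^p$ with $p>2$ depending only on $\mu$, $d$ and the Lipschitz character. Unlike the VMO route, this works under the hypotheses as stated (ellipticity only, lower-order coefficients merely $L^\infty$). Your closing observation is also the right one to flag: the $\dashint|u_\varepsilon|^2$ term generated by $V,B,c,\lambda$ cannot be removed by Poincar\'e in a Neumann problem, so it must be carried as part of the local source $H(Q,r)$ through the Gehring iteration and controlled at the very end by the global $H^1$ estimate; this is legitimate because $u_\varepsilon\in H^1(\Omega)\hookrightarrow L^{2^*}(\Omega)$ while the output exponent $p>2$ is chosen close to $2$, well below $2^*$, so $u_\varepsilon$ remains admissible as a source term in Gehring's lemma.
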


\begin{proof}
If $A\in\text{VMO}(\mathbb{R}^d)$ additionally satisfies $\eqref{a:2}$, one may show
\begin{equation}\label{f:2.7}
\|\nabla u_\varepsilon\|_{L^p(\Omega)}
\leq C\Big\{\|F\|_{L^q(\Omega)}+\|g\|_{B^{-1/p,p}(\partial\Omega)}\Big\}
\end{equation}
for $2\leq p<\infty$, with $1/q = 1/p+1/d$, which has been proved in \cite[Lemma 3.3]{X1}. Clearly,
we can choose $p>2$ close to $2$ such that $L^2(\Omega)\subset L^q(\Omega)$ and
$L^2(\partial\Omega)\subset B^{-1/p,p}(\partial\Omega)$, and this gives the estimate $\eqref{pri:2.13}$.
Note that without the periodicity and $\text{VMO}$ condition on $A$,
the estimate $\eqref{f:2.7}$ still holds for $|1/p-1/2|<\delta$,
where $\delta$ depends on $\mu,d$ and the character of $\Omega$, and we do not reproduce
the proof which is based upon a real method and reverse H\"older inequality (see \cite[Theorem 1.1.4]{S4}).
\end{proof}

\section{Convergence rates in Lipschitz domains}

\begin{thm}[convergence rates]\label{thm:3.1}
Let $\Omega\subset\mathbb{R}^d$ be a bounded Lipschitz domain. Suppose that
the coefficients satisfy $\eqref{a:1}$, $\eqref{a:2}$ and $\eqref{a:3}$.
Given $F\in L^{2}(\Omega;\mathbb{R}^m)$ and $g\in L^2(\partial\Omega;\mathbb{R}^m)$,
we assume that $u_\varepsilon,u_0\in H^1(\Omega;\mathbb{R}^m)$ satisfy
\begin{equation*}
(\emph{NH}_\varepsilon)\left\{\begin{aligned}
\mathcal{L}_\varepsilon(u_\varepsilon) & = F &\quad&\emph{in}~\Omega,\\
\frac{\partial u_\varepsilon}{\partial\nu_\varepsilon}
& = g &\quad & \emph{on}~\partial\Omega,
\end{aligned}\right.
\qquad
(\emph{NH}_0)\left\{\begin{aligned}
\mathcal{L}_0(u_0) & = F &\quad&\emph{in}~\Omega,\\
\frac{\partial u_0}{\partial\nu_0}
& = g &\quad & \emph{on}~\partial\Omega,
\end{aligned}\right.
\end{equation*}
respectively. Then we have
\begin{equation}\label{pri:3.1}
\|u_\varepsilon - u_0\|_{L^2(\Omega)}
\leq C\varepsilon^\rho
\Big\{\|F\|_{L^2(\Omega)}+\|g\|_{L^2(\partial\Omega)}\Big\},
\end{equation}
where $\rho>0$ and $C>0$ depend only on $\mu,\kappa,\lambda,m,d$ and $\Omega$.
\end{thm}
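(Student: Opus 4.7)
The plan is to construct a first-order two-scale approximation of $u_\varepsilon$ relative to $u_0$ and deduce $\eqref{pri:3.1}$ from estimates on the resulting remainder. Let $S_\varepsilon$ be a Steklov-type smoothing operator at scale $\varepsilon$ and let $\eta_\varepsilon\in C_c^\infty(\Omega)$ be a cutoff with $\eta_\varepsilon\equiv 1$ on $\Omega\setminus O_{3\varepsilon}$, $\eta_\varepsilon\equiv 0$ on $O_\varepsilon$ and $|\nabla\eta_\varepsilon|\le C/\varepsilon$, where $O_\delta=\{x\in\Omega:\mathrm{dist}(x,\partial\Omega)<\delta\}$. Define
\[
w_\varepsilon = u_\varepsilon - u_0 - \varepsilon\,\chi_k(x/\varepsilon)\,S_\varepsilon(\eta_\varepsilon\,\partial_k u_0) - \varepsilon\,\chi_0(x/\varepsilon)\,S_\varepsilon(\eta_\varepsilon\, u_0).
\]
The use of $S_\varepsilon$ and $\eta_\varepsilon$ is forced by the absence of $H^2$-regularity of $u_0$ in a Lipschitz domain; the Meyers-type estimate $\eqref{pri:2.13}$, applied both to $u_0$ and to the analogous adjoint problem, nevertheless supplies $\nabla u_0\in L^p(\Omega)$ for some $p>2$ depending only on $\mu,d$ and the Lipschitz character of $\Omega$, and this higher integrability drives the rate.

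Using the corrector equations satisfied by $\chi_k$ ($1\le k\le d$) and by $\chi_0$, and introducing the usual antisymmetric flux corrector for $\widehat{A}$ (the skew-symmetric tensor $\phi=(\phi_{ij}^{\alpha\beta})$ with $\partial_i\phi_{ij}^{\alpha\beta} = \widehat{a}_{ij}^{\alpha\beta} - a_{ij}^{\alpha\beta}(y) - a_{ik}^{\alpha\gamma}(y)\partial_{y_k}\chi_j^{\gamma\beta}$) together with its lower-order analogue for $\widehat{V}$, a direct but bookkeeping-heavy computation recasts the PDE and the conormal condition for $w_\varepsilon$ in the form
\[
\mathcal{L}_\varepsilon(w_\varepsilon) = \mathrm{div}(h_\varepsilon) + k_\varepsilon \quad\text{in }\Omega, \qquad \frac{\partial w_\varepsilon}{\partial\nu_\varepsilon} = -\,n\cdot h_\varepsilon \quad\text{on }\partial\Omega,
\]
where each of $h_\varepsilon,k_\varepsilon$ is either $O(\varepsilon)$ times a smoothed derivative of $\nabla u_0$ or of $u_0$, or a term supported in the boundary layer $O_{3\varepsilon}$ that involves $(\nabla u_0,u_0)$ without the factor $\varepsilon$.

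Combining the standard smoothing bounds $\|S_\varepsilon f\|_{L^2(\Omega)}\le C\|f\|_{L^2}$ and $\|S_\varepsilon f - f\|_{L^2}\le C\varepsilon\|\nabla f\|_{L^2}$ with the boundary-layer estimate $\|f\|_{L^2(O_{c\varepsilon})}\le|O_{c\varepsilon}|^{1/2-1/p}\|f\|_{L^p(\Omega)}\le C\varepsilon^{1/2-1/p}\|f\|_{L^p(\Omega)}$ and the higher integrability $\|\nabla u_0\|_{L^p}+\|u_0\|_{L^p}\le C(\|F\|_{L^2}+\|g\|_{L^2(\partial\Omega)})$, one controls $h_\varepsilon$ and $k_\varepsilon$ in $L^2(\Omega)$ by $C\varepsilon^\rho(\|F\|_{L^2}+\|g\|_{L^2(\partial\Omega)})$ with $\rho=1/2-1/p>0$. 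A duality argument against the adjoint Neumann solution (which enjoys the same Meyers-type bound) then gives $\|w_\varepsilon\|_{L^2(\Omega)} \le C\varepsilon^\rho(\|F\|_{L^2}+\|g\|_{L^2(\partial\Omega)})$, and since both corrector contributions to $w_\varepsilon$ are themselves $O(\varepsilon)$ in $L^2(\Omega)$, the estimate $\eqref{pri:3.1}$ follows.

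The main obstacle is precisely the missing $H^2$-regularity of $u_0$, which blocks the classical two-scale expansion and forces the smoothing/cutoff construction; controlling the resulting boundary-layer residuals concentrated in $O_\varepsilon$ rests entirely on the Meyers-type higher integrability provided by $\eqref{pri:2.13}$. A subsidiary nuisance, absent in the purely second-order case, is that the lower-order corrector $\chi_0$ and its associated flux corrector must be threaded through every step of the computation; these generate additional divergence-form residuals that are, however, handled by exactly the same mechanism.
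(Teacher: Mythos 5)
Your proposal is correct and takes essentially the same route as the paper: a first-order two-scale expansion $w_\varepsilon$ built with the smoothing operator $S_\varepsilon$ and a boundary-layer cutoff (the paper's Lemma \ref{lemma:3.1}, imported from \cite{X1}), with flux/dual correctors to pass to divergence form, and the Meyers-type higher integrability $\eqref{pri:2.13}$ driving the layer/co-layer estimates (Lemma \ref{lemma:3.2}) that yield the rate $\rho=1/2-1/p$. The only cosmetic deviation is that you close with a duality pairing against the adjoint Neumann solution, whereas the paper simply reads off $\|w_\varepsilon\|_{L^2}\le\|w_\varepsilon\|_{H^1}$ from the direct energy estimate of Lemma \ref{lemma:3.1}; since $\lambda\ge\lambda_0$ already gives coercivity in $L^2$, the duality step is dispensable here, though not wrong.
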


\begin{remark}
\emph{We mention that the results in this lemma do not depend on the symmetry condition $A=A^*$.
If it is assumed, then we have the convergence rate $O(\varepsilon\ln(r_0/\varepsilon))$
(see \cite[Theorem 1.2]{X3}). We introduce the following notation.
The co-layer set is $\Sigma_{r}=\{x\in\Omega:\text{dist}(x,\partial\Omega)>r\}$ and
$\Omega\setminus\Sigma_r$ is referred to as the layer part of $\Omega$.
We define the cut-off function $\psi_r\in C^1_0(\Omega)$ such that
$\psi_r = 1$ in $\Sigma_{2r}$,  $\psi_r = 0$ outside $\Sigma_{r}$ and
$|\nabla \psi_r|\leq C/r$.}
\end{remark}

\begin{lemma}\label{lemma:3.1}
Assume the same conditions as in Theorem $\ref{thm:3.1}$. Suppose that
the weak solutions $u_\varepsilon\in H^1(\Omega;\mathbb{R}^m)$ satisfies
$\mathcal{L}_\varepsilon(u_\varepsilon) = \mathcal{L}_0(u_0)$ in $\Omega$, and
$\partial u_\varepsilon/ \partial\nu_\varepsilon = \partial u_0/\partial\nu_0$ on $\partial\Omega$
with $u_0\in H^2(\Omega;\mathbb{R}^m)$. Let the first approximating corrector be defined by
\begin{equation}
w_\varepsilon = u_\varepsilon - u_0 - \varepsilon\chi_0(\cdot/\varepsilon)S_\varepsilon(\psi_{4\varepsilon} u_0)
-\varepsilon\chi_k(\cdot/\varepsilon)S_\varepsilon(\psi_{4\varepsilon}\nabla_k u_0),
\end{equation}
where $\psi_{4\varepsilon}$ is the cut-off function and
$S_\varepsilon$ is the smoothing operator (see \cite[Definition 2.10]{X1}).
Then we have
\begin{equation}\label{pri:3.4}
\|w_\varepsilon\|_{H^1(\Omega)}
\leq C\Big\{\|u_0\|_{H^1(\Omega\setminus\Sigma_{8\varepsilon})}
+\varepsilon\|u_0\|_{H^2(\Sigma_{4\varepsilon})}\Big\},
\end{equation}
where $C$ depends on $\mu,\kappa,\lambda,m,d$ and $\Omega$.
\end{lemma}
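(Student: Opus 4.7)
The idea is to derive an elliptic equation and a Neumann datum for $w_\varepsilon$ and then run a direct energy estimate whose right-hand side splits cleanly into an interior contribution of order $\varepsilon\|u_0\|_{H^2(\Sigma_{4\varepsilon})}$ and a boundary-layer contribution of order $\|u_0\|_{H^1(\Omega\setminus\Sigma_{8\varepsilon})}$. The classical antisymmetric flux correctors of periodic homogenization are the main tool.

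\textbf{Step 1: the equation for $w_\varepsilon$.} I would apply $\mathcal{L}_\varepsilon$ term by term to $w_\varepsilon$ via the product rule. Differentiating $\varepsilon\chi_k(\cdot/\varepsilon)S_\varepsilon(\psi_{4\varepsilon}\nabla_k u_0)$ produces the fast-variable derivative $(\partial_{y_k}\chi_j)(\cdot/\varepsilon)$ at leading order, and the $\chi_0$-piece produces $(\partial_{y_j}\chi_0)(\cdot/\varepsilon)$; the $\varepsilon^{-1}$ singular parts cancel against $\mathcal{L}_0 u_0$ by the cell problems $L_1(\chi_k^\beta+P_k^\beta)=0$ and $L_1(\chi_0)=\text{div}(V)$ together with \eqref{eq:2.2}. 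The residual is a sum of terms proportional to the tensors
\[
a_{ij}^{\alpha\beta}+a_{ik}^{\alpha\gamma}\partial_{y_k}\chi_j^{\gamma\beta}-\widehat{a}_{ij}^{\alpha\beta}, \qquad V_i^{\alpha\beta}+a_{ij}^{\alpha\gamma}\partial_{y_j}\chi_0^{\gamma\beta}-\widehat{V}_i^{\alpha\beta},
\]
evaluated at $\cdot/\varepsilon$ and acting on smoothed data. Each such tensor is $\partial_{y_k}\phi_{kij}^{\alpha\beta}$ for a periodic, antisymmetric flux corrector $\phi_{kij}^{\alpha\beta}=-\phi_{ikj}^{\alpha\beta}$ (and similarly for the $V$-piece). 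Using this antisymmetry and one integration by parts puts $\mathcal{L}_\varepsilon(w_\varepsilon)$ into the form $\text{div}(h_\varepsilon)+H_\varepsilon$, where $h_\varepsilon$ and $H_\varepsilon$ decompose naturally into an interior piece supported in $\Sigma_{4\varepsilon}$ and a boundary-layer piece supported in $\Omega\setminus\Sigma_{8\varepsilon}$ coming from $\nabla\psi_{4\varepsilon}$ and from $1-\psi_{4\varepsilon}$.

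\textbf{Step 2: boundary datum and energy estimate.} Because $\psi_{4\varepsilon}$ vanishes in a $4\varepsilon$-neighborhood of $\partial\Omega$ and $S_\varepsilon$ enlarges support by only $O(\varepsilon)$, the corrector terms vanish on $\partial\Omega$, hence
\[
\frac{\partial w_\varepsilon}{\partial\nu_\varepsilon}=\frac{\partial u_0}{\partial\nu_0}-\frac{\partial u_0}{\partial\nu_\varepsilon}= n\cdot(\widehat{A}-A(\cdot/\varepsilon))\nabla u_0+n\cdot(\widehat{V}-V(\cdot/\varepsilon))u_0.
\]
Testing the PDE for $w_\varepsilon$ against $w_\varepsilon$ itself, integrating by parts, and exploiting the coercivity given by $\lambda\geq\lambda_0$, I arrive at
\[
\|w_\varepsilon\|_{H^1(\Omega)}^2\leq\Big|\int_\Omega h_\varepsilon\cdot\nabla w_\varepsilon\Big|+\Big|\int_\Omega H_\varepsilon\cdot w_\varepsilon\Big|+\Big|\int_{\partial\Omega}\frac{\partial w_\varepsilon}{\partial\nu_\varepsilon}\cdot w_\varepsilon\,dS\Big|.
\]
The interior bulk pieces carry a factor of $\varepsilon$ or involve a second derivative of $S_\varepsilon(\psi_{4\varepsilon}\nabla u_0)$; the standard bounds $\|S_\varepsilon f\|_{L^2}\leq\|f\|_{L^2}$ and $\|S_\varepsilon f-f\|_{L^2}\leq C\varepsilon\|\nabla f\|_{L^2}$ reduce them to $\varepsilon\|u_0\|_{H^2(\Sigma_{4\varepsilon})}$, while the layer bulk pieces, localized by $\nabla\psi_{4\varepsilon}$ or by $1-\psi_{4\varepsilon}$, contribute $\|u_0\|_{H^1(\Omega\setminus\Sigma_{8\varepsilon})}$.

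\textbf{Main obstacle.} The delicate step is the boundary integral, since the integrand is merely bounded and oscillating and a direct trace inequality is too lossy. The remedy is the antisymmetry of the flux correctors: writing $n_i(a_{ij}^{\alpha\beta}(\cdot/\varepsilon)-\widehat{a}_{ij}^{\alpha\beta})$ as an $\varepsilon$-times tangential divergence of $\phi_{kij}^{\alpha\beta}(\cdot/\varepsilon)$ along $\partial\Omega$ and integrating by parts against a cut-off supported in a tube of width $8\varepsilon$ around $\partial\Omega$ converts the surface integral into a volume integral over $\Omega\setminus\Sigma_{8\varepsilon}$, which is absorbed into $\|u_0\|_{H^1(\Omega\setminus\Sigma_{8\varepsilon})}$. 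The $V$-contribution on the boundary is treated analogously. The lower-order perturbations $B\cdot\nabla$ and $c+\lambda I$ cause no real trouble, thanks to the uniform bounds on $B,c$ and the coercivity threshold $\lambda\geq\lambda_0$; the only bookkeeping burden is tracking $\varepsilon$-factors and supports through the smoothing operator, which is routine.
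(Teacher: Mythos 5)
Your overall strategy --- compute $\mathcal{L}_\varepsilon(w_\varepsilon)$, invoke the cell problems and the antisymmetric flux (dual) correctors, split the bulk contribution into an interior piece on $\Sigma_{4\varepsilon}$ and a boundary-layer piece on $\Omega\setminus\Sigma_{8\varepsilon}$, and close with an energy estimate --- matches the method of \cite[Lemmas 5.2, 5.3]{X1}, which is all the paper's own proof offers by way of argument. However, your treatment of the boundary integral contains a genuine gap.

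You propose to estimate $\int_{\partial\Omega}(\partial w_\varepsilon/\partial\nu_\varepsilon)\cdot w_\varepsilon\,dS$ by writing $n_i\big(a_{ij}^{\alpha\beta}(\cdot/\varepsilon)-\widehat{a}_{ij}^{\alpha\beta}\big)$ as $\varepsilon$ times a tangential divergence of the antisymmetric flux corrector $\phi_{kij}^{\alpha\beta}(\cdot/\varepsilon)$. This step fails: the representation $b_{ij}^{\alpha\beta}=\partial_{y_k}\phi_{kij}^{\alpha\beta}$ with $\phi_{kij}^{\alpha\beta}=-\phi_{ikj}^{\alpha\beta}$ holds for the full flux error $b_{ij}^{\alpha\beta}=\widehat{a}_{ij}^{\alpha\beta}-a_{ij}^{\alpha\beta}-a_{ik}^{\alpha\gamma}\partial_{y_k}\chi_j^{\gamma\beta}$ (mean zero and divergence-free over the cell), \emph{not} for $\widehat{a}_{ij}-a_{ij}$ alone, whose cell average equals $\int_Y a_{ik}^{\alpha\gamma}\partial_{y_k}\chi_j^{\gamma\beta}\,dy$ and is generically nonzero; the same objection applies to the $\widehat{V}-V$ piece (compare the paper's own $b_0=\widehat{V}-V-A\nabla\chi_0$). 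The $\chi$-gradient part is a bulk quantity and cannot be inserted into the boundary datum of $w_\varepsilon$, since by your own support analysis the correctors and their gradients vanish in a $3\varepsilon$-neighborhood of $\partial\Omega$. In fact, that surface integral should not appear at all if the computation is organized correctly: near $\partial\Omega$ one has $w_\varepsilon=u_\varepsilon-u_0$, hence $\mathcal{L}_\varepsilon(w_\varepsilon)=\mathcal{L}_0(u_0)-\mathcal{L}_\varepsilon(u_0)$ there, and the normal component of the associated flux $h_\varepsilon=(\widehat{A}-A(\cdot/\varepsilon))\nabla u_0+(\widehat{V}-V(\cdot/\varepsilon))u_0$ coincides (up to sign) with the Neumann datum $\partial w_\varepsilon/\partial\nu_\varepsilon$ you computed, so the two boundary contributions cancel after integration by parts. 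The cleanest fix is to avoid the boundary term from the outset: subtract the weak formulations of $(\mathrm{NH}_\varepsilon)$ and $(\mathrm{NH}_0)$ to get $a_\varepsilon(u_\varepsilon,\varphi)=a_0(u_0,\varphi)$ for all $\varphi\in H^1(\Omega;\mathbb{R}^m)$, insert $u_\varepsilon=w_\varepsilon+u_0+(\text{corrector terms})$, and read off a pure volume-integral identity for $a_\varepsilon(w_\varepsilon,\varphi)$ with no surface integral. With this correction, the rest of your bookkeeping (the smoothing estimates, the $\nabla\psi_{4\varepsilon}$ contributions, and coercivity) does yield the stated bound.
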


\begin{proof}
In fact, the desired result $\eqref{pri:3.4}$ has been shown in \cite[Lemma 5.3]{X1},
and its proof is too long to be reproduced here. We refer the reader to
\cite[Lemmas 5.2, 5.3]{X1} for the details.
\end{proof}

\begin{lemma}[layer $\&$ co-layer type estimates]\label{lemma:3.2}
Assume the same conditions as in Theorem $\ref{thm:3.1}$.
Let $u_0\in H^1(\Omega;\mathbb{R}^m)$ be the weak solution to $(\emph{NH}_0)$. Then
there exists $p>2$ such that
\begin{equation}\label{pri:3.2}
\|u_0\|_{H^1(\Omega\setminus\Sigma_{p_1\varepsilon})}
\leq C\varepsilon^{\frac{1}{2}-\frac{1}{p}}\Big\{\|F\|_{L^2(\Omega)}
+\|g\|_{L^2(\partial\Omega)}\Big\}
\end{equation}
and
\begin{equation}\label{pri:3.3}
\|\nabla^2 u_0\|_{L^2(\Sigma_{p_2\varepsilon})}
\leq C\varepsilon^{-\frac{1}{2}-\frac{1}{p}}\Big\{\|F\|_{L^2(\Omega)}
+\|g\|_{L^2(\partial\Omega)}\Big\}
\end{equation}
where $p_1,p_2>0$ are fixed real numbers, and $C$ depends on $\mu,d,p_1,p_2,\sigma,p$ and $\Omega$.
\end{lemma}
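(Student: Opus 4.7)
My plan is to derive both estimates from the $W^{1,p}$ bound $\eqref{pri:2.13}$ via Hölder's inequality and a dyadic covering, crucially using that $\mathcal{L}_0$ has constant coefficients so interior $H^2$ estimates are automatic.

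First I would fix $p>2$ as in the previous lemma so that
\[
\|\nabla u_0\|_{L^p(\Omega)}\leq C\bigl\{\|F\|_{L^2(\Omega)}+\|g\|_{L^2(\partial\Omega)}\bigr\}.
\]
For the layer estimate $\eqref{pri:3.2}$, the key geometric fact is $|\Omega\setminus\Sigma_t|\leq Ct$ for small $t$ in any bounded Lipschitz domain, so Hölder gives
\[
\|\nabla u_0\|_{L^2(\Omega\setminus\Sigma_{p_1\varepsilon})}\leq |\Omega\setminus\Sigma_{p_1\varepsilon}|^{\frac{1}{2}-\frac{1}{p}}\|\nabla u_0\|_{L^p(\Omega)}\leq C\varepsilon^{\frac{1}{2}-\frac{1}{p}}\bigl\{\|F\|_{L^2}+\|g\|_{L^2}\bigr\}.
\]
The $L^2(\Omega\setminus\Sigma_{p_1\varepsilon})$ piece of $\|u_0\|_{H^1}$ is handled identically after invoking the Sobolev embedding $W^{1,p}(\Omega)\hookrightarrow L^q(\Omega)$ for some $q>2$, combined with the standard energy bound $\|u_0\|_{L^2(\Omega)}\leq C\{\|F\|_{L^2}+\|g\|_{L^2}\}$.

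For the co-layer estimate $\eqref{pri:3.3}$ the plan is a dyadic decomposition
\[
\Sigma_{p_2\varepsilon}=\bigcup_{k=0}^{K}A_k,\qquad A_k=\bigl\{x\in\Omega:2^kp_2\varepsilon\leq \mathrm{dist}(x,\partial\Omega)<2^{k+1}p_2\varepsilon\bigr\},
\]
with $K\sim\log_2(r_0/\varepsilon)$. On each $A_k$ I cover by finitely overlapping balls of radius $\sim 2^k\varepsilon$ whose doubles lie in a slightly thickened annulus $A_k^{\ast}\subset\Omega$; since $\widehat{A},\widehat{V},\widehat{B},\widehat{c}$ are constants, the standard interior Caccioppoli for the differentiated equation (treating the first- and zeroth-order terms as perturbations) yields
\[
\|\nabla^2 u_0\|_{L^2(A_k)}^2\leq C(2^k\varepsilon)^{-2}\|\nabla u_0\|_{L^2(A_k^{\ast})}^2+C\|u_0\|_{L^2(A_k^{\ast})}^2+C\|F\|_{L^2(A_k^{\ast})}^2.
\]
I then trade the $L^2$ norm of $\nabla u_0$ for its $L^p$ norm via Hölder, using the Lipschitz-layer volume bound $|A_k^{\ast}|\leq C\cdot 2^k\varepsilon$ to gain $(2^k\varepsilon)^{1-2/p}$. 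Summing the geometric series $\sum_k(2^k\varepsilon)^{-1-2/p}=\varepsilon^{-1-2/p}\sum_k 2^{-k(1+2/p)}$ (which converges uniformly in $\varepsilon$) and absorbing the $\|u_0\|_{L^2}$ and $\|F\|_{L^2}$ contributions by finite overlap gives
\[
\|\nabla^2 u_0\|_{L^2(\Sigma_{p_2\varepsilon})}\leq C\varepsilon^{-\frac{1}{2}-\frac{1}{p}}\|\nabla u_0\|_{L^p(\Omega)}+C\|F\|_{L^2(\Omega)}+C\|u_0\|_{L^2(\Omega)},
\]
and an application of $\eqref{pri:2.13}$ together with the energy bound for $\|u_0\|_{L^2}$ produces $\eqref{pri:3.3}$.

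The main obstacle is lining up the scalings in the dyadic sum. One must invoke the interior $H^2$ estimate in the form $(2^k\varepsilon)^{-1}\|\nabla u_0\|_{L^2}$ (rather than $(2^k\varepsilon)^{-2}\|u_0\|_{L^2}$, which would cost an extra factor of $2^k\varepsilon$ and destroy summability), and then gain $(2^k\varepsilon)^{1-2/p}$ by Hölder so the cumulative exponent becomes $-(1+2/p)<0$. Once this alignment is arranged, both $\eqref{pri:3.2}$ and $\eqref{pri:3.3}$ reduce to $\eqref{pri:2.13}$ plus the $L^2$ energy estimate for the homogenized problem.
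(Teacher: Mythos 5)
Your proposal is correct and follows essentially the same route as the paper: both arguments reduce (3.2) to Hölder on the thin layer combined with the $W^{1,p}$ bound (2.13), and both derive (3.3) by applying the interior $H^2$/Caccioppoli estimate at scale $\delta(x)$, trading $L^2$ for $L^p$ by Hölder, and invoking (2.13) plus the energy estimate. The only point of divergence is organizational: the paper integrates the interior estimate pointwise in $x$ over $\Sigma_{p_2\varepsilon}$ and then evaluates the weighted integral $\int_{\Sigma_{(p_2\varepsilon)/2}}|\nabla u_0|^2[\delta(x)]^{-2}\,dx$ by a single Hölder step (the weight $\delta^{-2p/(p-2)}$ integrates to $\varepsilon^{-(p+2)/(p-2)}$ on the co-layer, giving $\varepsilon^{-1-2/p}$ after the dual-exponent power), whereas you first decompose $\Sigma_{p_2\varepsilon}$ into dyadic annuli $A_k$ of width $\sim 2^kp_2\varepsilon$, apply the Caccioppoli estimate on a finitely-overlapping cover of each annulus, and then sum the geometric series $\sum_k(2^k\varepsilon)^{-1-2/p}$. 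These are the same estimate expressed two ways: your dyadic sum and the paper's weighted integral compute the same quantity, and both correctly produce the factor $\varepsilon^{-1-2/p}$ for the squared $L^2$ norm (hence $\varepsilon^{-1/2-1/p}$ in (3.3)). If anything, your version sidesteps the Fubini/change-of-variables bookkeeping that the paper performs when converting $\int_{\Sigma_{p_2\varepsilon}}\dashint_{B(x,\delta(x)/4)}|\nabla^2 u_0|^2\,dy\,dx$ into an integral over the slightly larger co-layer $\Sigma_{(p_2\varepsilon)/2}$, at the cost of needing to state the finite-overlap property of the annular covers. Both are standard and interchangeable; your proof is valid.
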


\begin{proof}
The main ideas may be found in \cite[Lemma 5.1.5]{S4},
and we provide a proof for the sake of the completeness. We first handle the layer type estimate
$\eqref{pri:3.2}$, and it follows from H\"older's inequality and the estimate $\eqref{pri:2.13}$ that
\begin{equation*}
\begin{aligned}
\|u_0\|_{H^1(\Omega\setminus\Sigma_{p_1\varepsilon})}
\leq C\varepsilon^{\frac{1}{2}-\frac{1}{p}}\|u_0\|_{W^{1,p}(\Omega)}
    \leq C\varepsilon^{\frac{1}{2}-\frac{1}{p}}\Big\{\|F\|_{L^2(\Omega)}+\|g\|_{L^2(\partial\Omega)}\Big\}.
\end{aligned}
\end{equation*}

On account of the interior estimate for $\mathcal{L}_0$, we have
\begin{equation}\label{f:3.20}
\dashint_{B(x,\delta(x)/4)}|\nabla^2 u_0|^2 dy
\leq \frac{C}{[\delta(x)]^{2}}\dashint_{B(x,\delta(x)/2)}|\nabla u_0|^2 dy +
C\dashint_{B(x,\delta(x)/2)}|u_0|^2 dy
+ C\dashint_{B(x,\delta(x)/2)}|F|^2 dy
\end{equation}
for any $x\in\Sigma_{p_2\varepsilon}$, where $\delta(x)=\text{dist}(x,\partial\Omega)$.
Since $|y-x|\leq \delta(x)/4$, it is not hard to see that $|\delta(x)-\delta(y)|\leq |x-y|\leq \delta(x)/4$ and
this implies $(4/5)\delta(y)< \delta(x)< (4/3)\delta(y)$. Therefore,
\begin{equation*}
\begin{aligned}
\int_{\Sigma_{p_2\varepsilon}}|\nabla^2 u_0|^2 dx
\leq \int_{\Sigma_{p_2\varepsilon}}\dashint_{B(x,\delta(x)/4)}|\nabla^2 u_0|^2dy dx
\leq \int_{\Sigma_{(p_2\varepsilon)/2}}|\nabla^2 u_0|^2 dx.
\end{aligned}
\end{equation*}
Then integrating both sides of $\eqref{f:3.20}$ over co-layer set $\Sigma_{p_2\varepsilon}$ leads to
\begin{equation*}
\begin{aligned}
\int_{\Sigma_{p_2\varepsilon}}|\nabla^2 u_0|^2 dx
&\leq C\int_{\Sigma_{(p_2\varepsilon)/2}}|\nabla u_0|^2 [\delta(x)]^{-2} dx
+ C\int_\Omega |F|^2 dx + C\int_\Omega |u_0|^2 dx \\
&\leq C\varepsilon^{-1-\frac{2}{p}}\Big(\int_{\Omega}
|\nabla u_0|^pdx\Big)^{\frac{2}{p}}
+ C\int_\Omega |F|^2 dx + C\int_\Omega |u_0|^2 dx,
\end{aligned}
\end{equation*}
and this together with $\eqref{pri:2.13}$ and $H^1$ estimate (see \cite[Lemma 3.1]{X1}) gives the
stated estimate $\eqref{pri:3.3}$. We have completed the proof.
\end{proof}

\noindent\textbf{Proof of Theorem $\ref{thm:3.1}$.}
On account of Lemmas $\ref{lemma:3.1},\ref{lemma:3.2}$, it is not hard to see that
\begin{equation}\label{f:3.1}
\begin{aligned}
\|u_\varepsilon - u_0\|_{L^2(\Omega)}
&\leq C\Big\{\|u_0\|_{H^1(\Omega\setminus\Sigma_{8\varepsilon})}
+ \varepsilon\|\nabla^2 u_0\|_{L^2(\Sigma_{4\varepsilon})} + \varepsilon\|u_0\|_{H^1(\Omega)}\Big\}\\
&\leq C\varepsilon^{\frac{1}{2}-\frac{1}{p}}\Big\{\|F\|_{L^2(\Omega)}+\|g\|_{L^2(\partial\Omega)}
\Big\},
\end{aligned}
\end{equation}
where we employ the estimate $H^1$ estimate. Let $\rho = 1/2-1/p$, and we have completed the proof.
\qed

\begin{corollary}\label{cor:3.1}
Assume the same conditions as in Theorem $\ref{thm:3.1}$.
For any $\xi\in\mathbb{R}^m$,
let $v_\varepsilon = u_\varepsilon - \xi - \varepsilon\chi_0(x/\varepsilon)\xi$ and
$v_0 = u_0 -\xi$, where $u_\varepsilon$ and $u_0$ satisfy
$(\emph{NH}_\varepsilon)$ and $(\emph{NH}_0)$, respectively. Then we have
\begin{equation}
\|v_\varepsilon - v_0\|_{L^2(\Omega)}
\leq C\varepsilon^\rho
\Big\{\|F\|_{L^2(\Omega)}+\|g\|_{L^2(\partial\Omega)}+|\xi|\Big\},
\end{equation}
where $\rho>0$ and $C>0$ depend only on $\mu,\kappa,\lambda,m,d$ and $\Omega$.
\end{corollary}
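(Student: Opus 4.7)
The plan is straightforward: I would subtract $v_0$ from $v_\varepsilon$ so that the constant shifts by $\xi$ cancel, and then treat the remaining discrepancy as the sum of a ``homogenization error'' and a ``corrector term.'' Concretely,
\begin{equation*}
v_\varepsilon - v_0 \;=\; (u_\varepsilon - u_0) \;-\; \varepsilon\chi_0(x/\varepsilon)\xi,
\end{equation*}
so by the triangle inequality,
\begin{equation*}
\|v_\varepsilon - v_0\|_{L^2(\Omega)} \;\leq\; \|u_\varepsilon - u_0\|_{L^2(\Omega)} \;+\; \varepsilon|\xi|\,\|\chi_0(\cdot/\varepsilon)\|_{L^2(\Omega)}.
\end{equation*}
This reduces the task to estimating each term with the desired dependence on the right-hand side.

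For the first term, the pair $(u_\varepsilon, u_0)$ is precisely a solution pair of $(\mathrm{NH}_\varepsilon)$ and $(\mathrm{NH}_0)$ with the same data $(F, g)$, so Theorem $\ref{thm:3.1}$ gives directly
\begin{equation*}
\|u_\varepsilon - u_0\|_{L^2(\Omega)} \leq C\varepsilon^\rho\bigl\{\|F\|_{L^2(\Omega)} + \|g\|_{L^2(\partial\Omega)}\bigr\}.
\end{equation*}
For the second term I would use that $\chi_0 \in H^1_{per}(Y;\mathbb{R}^{m^2})$ has $L^2(Y)$ norm controlled by $\mu,\kappa,m,d$ through the cell problem $L_1(\chi_0) = \mathrm{div}(V)$. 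Tiling $\Omega$ by translates of $\varepsilon Y$ and using the $\varepsilon\mathbb{Z}^d$-periodicity of $\chi_0(\cdot/\varepsilon)$ yields
\begin{equation*}
\|\chi_0(\cdot/\varepsilon)\|_{L^2(\Omega)}^{2} \leq C|\Omega|\,\|\chi_0\|_{L^2(Y)}^{2} \leq C(\mu,\kappa,m,d,\Omega),
\end{equation*}
uniformly in $\varepsilon\in(0,1]$.

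Combining the two estimates and observing that $\varepsilon \leq \varepsilon^\rho$ for $\varepsilon\in(0,1]$ (since $\rho\in(0,1)$ is the exponent from Theorem $\ref{thm:3.1}$), the corrector contribution $C\varepsilon|\xi|$ is absorbed into $C\varepsilon^\rho|\xi|$ and the claimed estimate follows. The argument is genuinely routine: no real obstacle arises, since the crucial decay rate was already established in Theorem $\ref{thm:3.1}$ and the corrector is trivially $O(\varepsilon)$ in $L^2$. The point of the corollary is not any new smallness but the rearrangement into the form $v_\varepsilon - v_0$, which is the exact object that will feed into the Campanato-type iteration used for the boundary Lipschitz estimate in the subsequent section.
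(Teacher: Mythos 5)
Your proof is correct and is essentially the argument the paper implicitly intends (no explicit proof of Corollary \ref{cor:3.1} is given in the text). The decomposition $v_\varepsilon - v_0 = (u_\varepsilon - u_0) - \varepsilon\chi_0(\cdot/\varepsilon)\xi$, followed by Theorem \ref{thm:3.1} for the first piece and the uniform $L^2$ bound on the periodic corrector (via the change of variables $y=x/\varepsilon$ and the cell-problem energy estimate) for the second, together with $\varepsilon\le\varepsilon^\rho$ for $\varepsilon\in(0,1]$, yields exactly the stated inequality. The equations \eqref{eq:3.1}--\eqref{eq:3.3} in the remark following the corollary are not needed here; they are recorded for the revised approximating lemma (Lemma \ref{lemma:4.4}), which requires the Caccioppoli inequality for $v_\varepsilon$ rather than the present corollary's decomposition.
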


\begin{remark}
\emph{Let $v_\varepsilon$ and $v_0$ be given in Corollary $\ref{cor:3.1}$.
Then one may have the following equations}
\begin{equation}\label{eq:3.1}
\left\{\begin{aligned}
\mathcal{L}_\varepsilon (v_\varepsilon) & = F
+ \varepsilon\emph{div}\big(V_\varepsilon\chi_{0,\varepsilon}\xi\big)
- \big[B_\varepsilon(\nabla\chi_0)_\varepsilon  + c_\varepsilon+\lambda I\big]\xi
-\varepsilon\chi_{0,\varepsilon}\big[c_\varepsilon+\lambda I\big]\xi
\quad &\emph{in}&~\Omega, \\
\frac{\partial v_\varepsilon}{\partial \nu_\varepsilon}
& = g -\varepsilon n\cdot V_\varepsilon\chi_{0,\varepsilon}\xi
-n\cdot\big[A_\varepsilon(\nabla\chi_0)_\varepsilon+V_\varepsilon\big]\xi
\quad &\emph{on}&~\partial\Omega,
\end{aligned}\right.
\end{equation}
\emph{and}
\begin{equation}\label{eq:3.2}
\left\{\begin{aligned}
\mathcal{L}_0 (v_0) & = F - (\widehat{c} + \lambda I)\xi
\quad &\emph{in}&~\Omega, \\
\frac{\partial v_0}{\partial \nu_0}
& = g
-n\cdot\widehat{V}\xi
\quad &\emph{on}&~\partial\Omega,
\end{aligned}\right.
\end{equation}
\emph{
in which such the notation $V_\varepsilon = V(x/\varepsilon)$
and $\chi_{0,\varepsilon} = \chi_0(x/\varepsilon)$
follow the same simplified way as in \cite[Remark 2.15]{X1}.
Here we plan to give some simple computations as a preparation. Recalling the form of $\widehat{c}$ in
$\eqref{eq:2.2}$, let $\Delta \vartheta_0 = \widehat{c} - c - B\nabla\chi_0$ in $Y$ and
$\int_Y\vartheta_0(y)dy = 0$. This implies $\vartheta_0\in H^2_{loc}(\mathbb{R}^d)$
(see \cite[Remark 2.7]{X1}). Also, set $b_{0} = \widehat{V} - V(y) - A(y)\nabla\chi_0(y)$, and
$n\cdot b_0(y) = \frac{\varepsilon}{2}\big[n_i\frac{\partial}{\partial x_j} - n_j\frac{\partial}{\partial x_i}
\big]E_{ji0}(y)$, where $E_{ji0}$ is referred to as the dual correctors and $y=x/\varepsilon$
(see \cite[Lemma 4.4]{X1}).
Hence, there hold
\begin{equation}\label{eq:3.3}
\left\{\begin{aligned}
\mathcal{L}_0(v_0)
&= \mathcal{L}_\varepsilon(v_\varepsilon) - \varepsilon\text{div}\big[V(y)\chi_{0}(y)
+ (\nabla \vartheta_0)(y)\big]\xi
+\varepsilon\chi_{0}(y)\big[ c(y)+\lambda I\big]\xi
\quad &\text{in}&~~\Omega, \\
\frac{\partial v_0}{\partial\nu_0}
& = \frac{\partial v_\varepsilon}{\partial\nu_\varepsilon} + \varepsilon n\cdot V(y)\chi_{0}(y)\xi
- \frac{\varepsilon}{2}\Big[n_i\frac{\partial}{\partial x_j}
-n_j\frac{\partial}{\partial x_i}\Big]E_{ji0}(y) \xi
\quad &\text{on}&~\partial\Omega,
\end{aligned}\right.
\end{equation}}
\emph{and it will benefit the later discussion in the approximating lemma.}
\end{remark}

\section{Local boundary estimates}

\begin{thm}[Lipschitz estimates at large scales]\label{thm:4.1}
Let $\Omega$ be a bounded $C^{1,\eta}$ domain.
Suppose that the coefficients of $\mathcal{L}_\varepsilon$ satisfy
$\eqref{a:1}$, $\eqref{a:2}$, and $\eqref{a:3}$.
Let $u_\varepsilon\in H^1(D_5;\mathbb{R}^m)$ be a weak solution of
$\mathcal{L}_\varepsilon(u_\varepsilon) = F$ in $D_5$ and
$\partial u_\varepsilon/\partial\nu_\varepsilon = g$ on $\Delta_5$,
where $F\in L^p(D_5;\mathbb{R}^m)$ with $p>d$,
and $g\in C^{0,\sigma}(\Delta_5;\mathbb{R}^m)$ with $0<\sigma\leq\eta<1$.
Then there holds
\begin{equation}\label{pri:4.6}
\begin{aligned}
\Big(\dashint_{D_r} |\nabla u_\varepsilon|^2dx\Big)^{\frac{1}{2}}
\leq C\bigg\{\Big(\dashint_{D_1} |u_\varepsilon|^2dx\Big)^{\frac{1}{2}}
+ \Big(\dashint_{D_{2r}} |u_\varepsilon|^2dx\Big)^{\frac{1}{2}}
+ \Big(\dashint_{D_1} |F|^pdx\Big)^{\frac{1}{p}}
+ \|g\|_{C^{0,\sigma}(\Delta_1)}\bigg\}
\end{aligned}
\end{equation}
for any $\varepsilon\leq r<(1/4)$,
where $C$ depends only on $\mu, \lambda, \kappa, d, m, p$ and the character of $\Omega$.
\end{thm}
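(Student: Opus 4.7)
The plan is to prove Theorem \ref{thm:4.1} by coupling the convergence rate of Theorem \ref{thm:3.1} (together with its corrector-shifted version in Corollary \ref{cor:3.1}) with a Campanato-type iteration at scales $\varepsilon\leq r<1/4$, in the style of Armstrong--Smart and Shen. The unknown $u_\varepsilon$ itself already carries quantitative information (the operator is not translation-invariant under constants), so the iterated excess must track a constant $\tilde{c}$ in addition to an affine part; this is the source of the extra term $(\dashint_{D_{2r}}|u_\varepsilon|^2)^{1/2}$ in \eqref{pri:4.6}.

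First, for each scale $r\in[\varepsilon,1/4)$ and $Q=0$, I would let $u_0$ solve the homogenized Neumann problem on $D_{2r}$ with data matching $u_\varepsilon$: $\mathcal{L}_0 u_0=F$ in $D_{2r}$, $\partial u_0/\partial\nu_0=\partial u_\varepsilon/\partial\nu_\varepsilon$ on $\partial D_{2r}$. Set $\tilde{c}=u_0(0)$ and form $v_\varepsilon=u_\varepsilon-\tilde{c}-\varepsilon\chi_0(\cdot/\varepsilon)\tilde{c}$, $v_0=u_0-\tilde{c}$. A rescaling of Theorem \ref{thm:3.1} applied on $D_{2r}$ (and the remark following Corollary \ref{cor:3.1} that identifies the source terms produced by the shift) yields the approximating estimate
\begin{equation*}
\Big(\dashint_{D_r}|v_\varepsilon-v_0|^2\Big)^{1/2}
\leq C\Big(\frac{\varepsilon}{r}\Big)^{\rho}\Big\{\Phi(2r)+r|\tilde{c}|\Big\},
\end{equation*}
where $\Phi(s):=\bigl(\dashint_{D_s}|u_\varepsilon|^2\bigr)^{1/2}+s^{2-d/p}\|F\|_{L^p(D_s)}+s\|g\|_{C^{0,\sigma}(\Delta_s)}$; the factor $r$ in front of $|\tilde{c}|$ is the key rescaling weight (as noted in the introduction).

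Second, I would introduce the Campanato quantities
\begin{equation*}
H(r)=\frac{1}{r}\inf_{M\in\mathbb{R}^{m\times d}}\Big(\dashint_{D_r}|u_\varepsilon-Mx-\tilde{c}|^2\Big)^{1/2},\qquad h(r)=\frac{|\tilde{c}(r)|}{r}+\Phi(1),
\end{equation*}
and use boundary $C^{1,\alpha}$ regularity for the homogenized operator $\mathcal{L}_0$ (whose coefficients $\widehat{A},\widehat{V}$ are Hölder continuous under \eqref{a:4}) on $D_{2r}$ to conclude that for a suitably small $\theta\in(0,1/8)$,
\begin{equation*}
H(\theta r)\leq \tfrac12 H(r)+C\Big(\frac{\varepsilon}{r}\Big)^{\rho}\bigl(\Phi(2r)+r h(r)\bigr).
\end{equation*}
The monotonicity of $h(r)$ (bootstrapped from the definition of $\tilde{c}=u_0(0)$, controlled by $\Phi(2r)$ plus data) combined with the iteration lemma from \cite{S5} (stated as Lemma \ref{lemma:4.2}) then implies $H(r)+h(r)\leq C\bigl(\Phi(1)+h(1)\bigr)$ uniformly for $\varepsilon\leq r<1/4$. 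Converting the Campanato excess to a gradient estimate by a Caccioppoli inequality (and absorbing the minimizing $M$ by the triangle inequality $|M|\leq H(r)+\Phi(2r)/r$) yields $(\dashint_{D_r}|\nabla u_\varepsilon|^2)^{1/2}\leq C\{H(r)+|M|+r^{-1}|\tilde{c}|\}$, from which \eqref{pri:4.6} follows; the term $(\dashint_{D_{2r}}|u_\varepsilon|^2)^{1/2}$ is exactly the contribution of $|\tilde{c}(r)|/r\cdot r=|\tilde{c}(r)|$ that cannot be replaced by the $D_1$ average because $u_\varepsilon$ is not determined up to constants.

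The main obstacle I expect is the second step: carefully identifying the correct source terms in the equations \eqref{eq:3.1}--\eqref{eq:3.3} after the shift by $\tilde{c}$, so that the approximating estimate produces the weight $r|\tilde{c}|$ rather than $|\tilde{c}|$. Without this precise weight the recursion would not close, since $|\tilde{c}|$ would compete with $H(r)$ at unit scale. The small technical point of making $\tilde{c}(r)$ depend on $r$ in a controlled way (so that $|\tilde{c}(\theta r)-\tilde{c}(r)|$ is absorbed into $H(r)$) and of using the local $W^{1,p}$ estimate with $p>2$ (from the previous section) to bootstrap $\Phi(2r)$ uniformly in $r$, constitute the remaining non-trivial bookkeeping.
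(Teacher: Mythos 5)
Your proposal follows the paper's own route almost exactly: the shift $v_\varepsilon=u_\varepsilon-\tilde c-\varepsilon\chi_0(\cdot/\varepsilon)\tilde c$ with $\tilde c=u_0(0)$, the revised approximating estimate carrying the crucial weight $r|\tilde c|$ (Lemma \ref{lemma:4.4}), the $C^{1,\sigma}$ decay for the constant-coefficient homogenized problem to contract a Campanato-type excess with infimum over affine parts plus the tracked constant (Lemmas \ref{lemma:5.5}--\ref{lemma:5.6}), Shen's iteration lemma (Lemma \ref{lemma:4.2}), and finally Caccioppoli with the uniform Hölder/$W^{1,p}$ bootstrap to convert to the gradient. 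Two small slips do not change the substance but are worth noting: the homogenized coefficients $\widehat A,\widehat V$ are constants (not merely Hölder), and your $h(r)=|\tilde c(r)|/r+\Phi(1)$ has the wrong homogeneity as a bookkeeping quantity — the paper puts $|\tilde c|$ (undivided) into the auxiliary ``data'' term $c(2r)$ in Lemma \ref{lemma:4.2}, which is what prevents blow-up as $r\to\varepsilon$; you implicitly correct this later by writing the final contribution as $|\tilde c(r)|$.
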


\begin{lemma}[boundary Caccioppoli's inequality]\label{lemma:4.3}
Let $\Omega\subset\mathbb{R}^d$ be a bounded Lipschitz domain. Suppose that
the coefficients of $\mathcal{L}_\varepsilon$ satisfy $\eqref{a:1}$ and $\eqref{a:3}$ with
$\lambda\geq\lambda_0$. Let $u_\varepsilon\in H^1(D_2;\mathbb{R}^m)$ be a weak
solution of $\mathcal{L}_\varepsilon(u_\varepsilon) = \emph{div}(f)+F$ in $D_2$ with
$\partial u_\varepsilon/\partial\nu_\varepsilon = g-n\cdot f$ on $\Delta_2$. Then there holds
\begin{equation}\label{pri:2.12}
\Big(\dashint_{D_r}|\nabla u_\varepsilon|^2\Big)^{1/2}
\leq C_{\mu}\bigg\{\frac{1}{r}\Big(\dashint_{D_{2r}}|u_\varepsilon|^2\Big)^{1/2}
+ \Big(\dashint_{D_{2r}}|f|^2\Big)^{1/2}
+ r\Big(\dashint_{D_{2r}}|F|^2\Big)^{1/2}
+ \Big(\dashint_{\Delta_{2r}}|g|^2\Big)^{1/2}\bigg\}
\end{equation}
for any $0<r\leq 1$, where $C_\mu$ depends only on $\mu,d,m$, and the character of $\Omega$.
\end{lemma}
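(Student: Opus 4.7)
The strategy is the classical energy-method proof of Caccioppoli's inequality, adapted to absorb the lower-order coefficients, the inhomogeneous right-hand side, and the Neumann datum $g$. Fix a cutoff $\phi \in C_0^1(B(Q, 2r))$ with $\phi \equiv 1$ on $B(Q, r)$, $0 \leq \phi \leq 1$, and $|\nabla\phi| \leq C/r$. Since $\phi$ vanishes near $\partial B(Q, 2r) \cap \Omega$, the function $\varphi = u_\varepsilon \phi^2$ is admissible as a test function in the weak form of $\mathcal{L}_\varepsilon u_\varepsilon = \text{div}(f) + F$ with the stated Neumann condition, giving
\begin{equation*}
\int_{D_{2r}} \bigl[A_\varepsilon \nabla u_\varepsilon + V_\varepsilon u_\varepsilon\bigr]\cdot\nabla\varphi + \bigl[B_\varepsilon \nabla u_\varepsilon + (c_\varepsilon + \lambda I) u_\varepsilon\bigr]\cdot\varphi \,dx = \int_{D_{2r}}\bigl[-f\cdot\nabla\varphi + F\cdot\varphi\bigr]\,dx + \int_{\Delta_{2r}} g\cdot\varphi\,dS.
\end{equation*}
Expanding $\nabla\varphi = \phi^2\nabla u_\varepsilon + 2 u_\varepsilon \phi\nabla\phi$ and applying the ellipticity \eqref{a:1}, the principal part yields $\mu\int\phi^2 |\nabla u_\varepsilon|^2 + \lambda\int\phi^2 |u_\varepsilon|^2$ on the left-hand side.

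Next I bound each of the remaining terms by Young's inequality with a small parameter $\delta = \delta(\mu) > 0$, absorbing every resulting $\delta\int\phi^2|\nabla u_\varepsilon|^2$ piece into the left side. This produces three families of leftover contributions: a cutoff piece $\int|u_\varepsilon|^2|\nabla\phi|^2 \leq Cr^{-2}\int_{D_{2r}}|u_\varepsilon|^2$; a coefficient piece of the form $C(\mu)(\kappa^2 + \kappa)\int\phi^2|u_\varepsilon|^2$ arising from the $V_\varepsilon,B_\varepsilon,c_\varepsilon$ terms; and source contributions $\int|f|^2\phi^2$ and $r^2\int|F|^2\phi^2$ (the latter via the scaled Young estimate $|F\cdot u_\varepsilon| \leq \tfrac{r^2}{2}|F|^2 + \tfrac{1}{2r^2}|u_\varepsilon|^2$, whose second summand is absorbable into the $\lambda$-reservoir). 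The definition $\lambda_0 = c(m,d)\mu^{-1}\{\|V\|_\infty^2 + \|B\|_\infty^2 + \|c\|_\infty\}$ is calibrated precisely so that, with $c(m,d)$ chosen large enough, the coefficient piece is absorbed into $\lambda\int\phi^2|u_\varepsilon|^2$ under the single hypothesis $\lambda \geq \lambda_0$, requiring no smallness on $\kappa$.

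The most delicate step is the boundary integral $\int_{\Delta_{2r}} g \cdot u_\varepsilon \phi^2\, dS$. I handle it by another scaled Young estimate, $|g\cdot u_\varepsilon| \leq \tfrac{r}{2\eta}|g|^2 + \tfrac{\eta}{2r}|u_\varepsilon|^2$, combined with the classical trace inequality on the Lipschitz domain
\begin{equation*}
\int_{\Delta_{2r}}|u_\varepsilon|^2\,dS \leq C \bigl\{ r^{-1}\int_{D_{2r}}|u_\varepsilon|^2 + r\int_{D_{2r}}|\nabla u_\varepsilon|^2 \bigr\},
\end{equation*}
and choose the free parameter $\eta = \eta(\mu)$ small enough so that the trailing $\int|\nabla u_\varepsilon|^2$ is absorbable into the left side. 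This is the main obstacle: the two scaling factors in the Young step must be balanced simultaneously against the $1/r$ and $r$ weights in the trace inequality in order to generate exactly the $r\int_{\Delta_{2r}}|g|^2$ term matching the normalization of \eqref{pri:2.12}. After these absorptions, the resulting $L^2$ bound on $\int_{D_r}|\nabla u_\varepsilon|^2$ is divided by $|D_r|\asymp r^d$, and taking square roots yields the stated estimate.
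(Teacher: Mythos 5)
Your plan follows the same energy-method route as the paper, and the main structure is sound: test with $\varphi = u_\varepsilon\phi^2$, use ellipticity plus the $\lambda$-term on the left, absorb the lower-order coefficients by the hypothesis $\lambda\geq\lambda_0$, and handle the Neumann boundary integral by Young's inequality plus a trace estimate. However, there is a genuine gap in the boundary step as you have written it. After Young's inequality the term to control is $\tfrac{\eta}{2r}\int_{\Delta_{2r}}|\phi u_\varepsilon|^2\,dS$, and the trace inequality must be applied to the \emph{cut-off} function $\phi u_\varepsilon$, not to $u_\varepsilon$. Your stated trace bound produces $\tfrac{C\eta}{2}\int_{D_{2r}}|\nabla u_\varepsilon|^2$, without the weight $\phi^2$, and this cannot be absorbed into the left-hand side, which is $\mu\int_{D_{2r}}\phi^2|\nabla u_\varepsilon|^2$ and only controls the gradient on the support of $\phi^2$ (essentially $D_r$). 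Applying the trace to $\phi u_\varepsilon$ instead gives
\begin{equation*}
\frac{\eta}{2r}\int_{\Delta_{2r}}|\phi u_\varepsilon|^2\,dS
\leq C\eta\int_{D_{2r}}\phi^2|\nabla u_\varepsilon|^2
+ \frac{C\eta}{r^2}\int_{D_{2r}}|u_\varepsilon|^2,
\end{equation*}
where the gradient term now carries $\phi^2$ and can be absorbed for small $\eta$, while the commutator contribution $|\nabla\phi|^2|u_\varepsilon|^2$ is $\leq Cr^{-2}|u_\varepsilon|^2$ and joins the cutoff piece. This is precisely how the paper arrives at the bound $I\leq \tfrac{\mu}{10}\int|\phi\nabla u_\varepsilon|^2 + C\int|\phi u_\varepsilon|^2 + C\int_{\Delta}|\phi g|^2$, with the $\phi$-weight explicitly retained on the gradient. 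A second, smaller point: your parenthetical remark that the term $\tfrac{1}{2r^2}|u_\varepsilon|^2$ arising from the $F$ estimate is ``absorbable into the $\lambda$-reservoir'' is not correct for small $r$, since $r^{-2}$ is unbounded and cannot be dominated by $\lambda$; that term is simply carried to the right-hand side, where the statement of the lemma already allows $\tfrac{1}{r^2}\int_{D_{2r}}|u_\varepsilon|^2$.
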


\begin{remark}
\emph{The condition $\lambda\geq\lambda_0$ guarantees that the constant $C_\mu$ in $\eqref{pri:2.12}$
do not depend on $\kappa$, which may lead to a scaling-invariant estimate even for the case $r>1$
(see \cite[Lemma 2.7]{X2}). However, we do not seek
such the convenience here. Also, we mention that the range of $0<r\leq 1$ is necessary in our proof.}
\end{remark}

\begin{proof}
By rescaling arguments we may prove the result for $r=1$.
The proof is quite similar to that given for \cite[Lemma 2.7]{X0}, and it is not hard to
derive that
\begin{equation*}
\begin{aligned}
\frac{\mu}{2}\int_{D_2} & \phi^2|\nabla u_\varepsilon|^2 dx
+  (\lambda - \lambda_0)\int_{D_2} \phi^2 |u_\varepsilon|^2 dx \\
&\leq  C_\mu\int_{D_2} |\nabla\phi|^2|u_\varepsilon|^2 dx
+C_{\mu}\int_{D_2} \phi^2|f|^2 dx + \int_{D_2} \phi^2|F||u_\varepsilon| dx
+ \underbrace{\int_{\Delta_2} \phi^2g u_\varepsilon dS}_{I},
\end{aligned}
\end{equation*}
where $\phi\in C_0^1(\mathbb{R}^d)$ is a cut-off function satisfying
$\phi = 1$ in $D_1$ and $\phi = 0$ outside $D_{3/2}$ with $|\nabla\phi|\leq C$, and
the ellipticity condition $\eqref{a:1}$ coupled with integration by parts has been used in the computations.
Note that the last term $I$ is the new thing compared to the proof in \cite[Lemma 2.7]{X0}, and
the reminder of the proof is standard. Thus, we have that
\begin{equation*}
I \leq \frac{\mu}{10}\int_{D_2}|\phi\nabla u_\varepsilon|^2 dx
+ C\int_{D_2}|\phi u_\varepsilon|^2 dx + C\int_{\Delta_2} |\phi g|^2 dS
\end{equation*}
Note that the constant $C$ actually depends on $\mu,m,d$ and the character of $\Omega$.
Thus we can not use $\lambda_0$ to absorb this constant, which also means we can not deal with
the case $r>1$ by simply using the rescaling argument. We have completed the proof.
\end{proof}

\begin{remark}\label{re:4.1}
\emph{Assume the same conditions and $u_\varepsilon$ as in Lemma $\ref{lemma:4.3}$. Let
$v_\varepsilon = u_\varepsilon - \xi - \varepsilon\chi_0(x/\varepsilon)\xi$ satisfy
$\eqref{eq:3.1}$ in $D_2$. Then there holds
\begin{equation}\label{pri:4.7}
\Big(\dashint_{D_r}|\nabla v_\varepsilon|^2\Big)^{1/2}
\leq C_{\mu}\bigg\{\frac{1}{r}\Big(\dashint_{D_{2r}}|v_\varepsilon|^2\Big)^{1/2}
+ \Big(\dashint_{D_{2r}}|f|^2\Big)^{1/2}
+ r\Big(\dashint_{D_{2r}}|F|^2\Big)^{1/2}
+ \Big(\dashint_{\Delta_{2r}}|g|^2\Big)^{1/2} + |\xi|\bigg\}
\end{equation}
for any $0<r\leq 1$, where $C_\mu$ depends only on $\mu,d,m$, and the character of $\Omega$.}
\end{remark}

\begin{lemma}[local $W^{1,p}$ boundary estimate]\label{lemma:2.2}
Let $\Omega\subset\mathbb{R}^d$ be a bounded $C^1$ domain, and $2< p<\infty$.
Suppose that the coefficients of $\mathcal{L}_\varepsilon$ satisfy $\eqref{a:1}$ and $\eqref{a:3}$, and
$A\in\emph{VMO}(\mathbb{R}^d)$ additionally satisfies $\eqref{a:2}$.
Given $f\in L^p(D_2;\mathbb{R}^{md})$, $F\in L^q(D_2;\mathbb{R}^m)$ with $q=\frac{pd}{d+p}$ and
$g\in L^\infty(\Delta_2;\mathbb{R}^m)$, define a local source quantity as
\begin{equation*}
\mathcal{R}_p(f,F,g;r) = \Big(\dashint_{D_r}|f|^p\Big)^{1/p} + r\Big(\dashint_{D_r}|F|^q\Big)^{1/q}
+ \|g\|_{L^\infty(\Delta_r)}
\end{equation*}
for any $0<r\leq 1$.
Let $u_\varepsilon\in H^1(D_2;\mathbb{R}^m)$ be the weak solution to
$\mathcal{L}_\varepsilon(u_\varepsilon) = \emph{div}(f)+ F$ in $D_2$ and
$\partial u_\varepsilon/\partial\nu_\varepsilon = g -n\cdot f$ on $\Delta_2$ with the local boundedness
assumption
\begin{equation}\label{a:2.2}
  \|u_\varepsilon\|_{W^{1,2}(D_1)}+ \mathcal{R}_p(f,F,g;1) \leq  1.
\end{equation}
Then, there exists $C_p>0$, depending on $\mu,\kappa,\lambda,m,d,p,\|A\|_{\emph{VMO}}$ and
the character of $\Omega$, such that
\begin{equation}\label{pri:2.10}
\|u_\varepsilon\|_{W^{1,p}(D_{1/2})} \leq C_p.
\end{equation}
\end{lemma}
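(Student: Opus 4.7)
The plan is a bootstrap argument that improves the integrability of $\nabla u_\varepsilon$ from $L^2$ up to $L^p$ in finitely many steps. The key observation is that the lower-order terms of $\mathcal{L}_\varepsilon$ can be absorbed into the source data by means of Sobolev embedding, reducing the problem at each stage to the leading-order operator $-\text{div}(A(x/\varepsilon)\nabla\cdot)$, for which a local $W^{1,r}$ Neumann estimate is available uniformly in $\varepsilon$ under the VMO and periodicity hypotheses.

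First I move the lower-order terms to the right-hand side, rewriting the system as
\begin{equation*}
-\text{div}\bigl[A(x/\varepsilon)\nabla u_\varepsilon\bigr] = \text{div}(\tilde f) + \tilde F \quad\text{in } D_2,\qquad n\cdot A(x/\varepsilon)\nabla u_\varepsilon = g - n\cdot\tilde f \quad\text{on } \Delta_2,
\end{equation*}
with $\tilde f := f + V(x/\varepsilon)u_\varepsilon$ and $\tilde F := F - B(x/\varepsilon)\nabla u_\varepsilon - [c(x/\varepsilon)+\lambda I]u_\varepsilon$. The $L^\infty$-bounds on $V,B,c$ yield the pointwise majorizations $|\tilde f|\leq |f|+\kappa|u_\varepsilon|$ and $|\tilde F|\leq |F|+\kappa|\nabla u_\varepsilon|+(\kappa+\lambda)|u_\varepsilon|$.

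Next I invoke a localized version of the $W^{1,r}$ Neumann estimate \eqref{f:2.7}, obtained by a standard cutoff argument: for $2\leq r<\infty$ and concentric half-balls $D_s\subset D_t\subset D_2$,
\begin{equation*}
\|\nabla u_\varepsilon\|_{L^r(D_s)} \leq C_{r,s,t}\bigl\{\|\tilde f\|_{L^r(D_t)} + \|\tilde F\|_{L^{r_\flat}(D_t)} + \|g\|_{L^\infty(\Delta_t)} + \|u_\varepsilon\|_{L^r(D_t)}\bigr\},
\end{equation*}
where $1/r_\flat = 1/r + 1/d$. I then set up the bootstrap via a decreasing sequence of radii $1=s_0>s_1>\cdots>s_N=1/2$ and an increasing sequence of exponents $2=p_0<p_1<\cdots<p_N=p$ with $p_{k+1} := \min(p,p_k^*)$, where $p_k^* = p_kd/(d-p_k)_+$ is the Sobolev conjugate of $p_k$ (set to $\infty$ once $p_k>d$). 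Inductively, assume $u_\varepsilon\in W^{1,p_k}(D_{s_k})$ with a quantitative bound. Sobolev embedding gives $u_\varepsilon\in L^{p_k^*}(D_{s_k})$, and hence $\tilde f\in L^{p_{k+1}}(D_{s_k})$. The algebraic identity $(p_k^*)_\flat = p_k$, together with the fact that the Sobolev conjugate of $q = pd/(p+d)$ is exactly $p$, ensures $\tilde F\in L^{(p_{k+1})_\flat}(D_{s_k})$. Applying the localized estimate on $D_{s_{k+1}}\subset D_{s_k}$ with exponent $p_{k+1}$ closes the inductive step.

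The iteration terminates in a number of steps depending only on $p$ and $d$, since the Sobolev scaling $p_{k+1}=p_k^*$ drives $p_k$ past $d$ in finitely many iterations; one additional application of the local estimate then yields the desired $W^{1,p}(D_{1/2})$ bound. The main technical point is the uniformity in $\varepsilon$ of the local $W^{1,r}$ estimate for the leading operator, which is delicate when $r$ is far from $2$ and is precisely what requires both the periodicity of $A$ and the assumption $A\in\text{VMO}(\mathbb{R}^d)$, cf.\ \cite[Lemma 3.3]{X1}. The bookkeeping of norms through the telescoping of radii and the tracking of constants through the finite chain of inductive steps is routine, and collecting these gives the stated bound \eqref{pri:2.10}.
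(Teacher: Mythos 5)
Your proposal is essentially the same as the paper's: both rely on a cutoff localization that feeds the global $W^{1,p}$ Neumann estimate (\cite[Lemma 3.3 / Theorem 3.1]{X1}) into a Sobolev-scaling bootstrap $p_k \to p_k^*$, terminating in $O(d)$ steps, exactly as the paper does with $p_i = 2d/(d-2i)$ and a telescoping chain of radii. The one place where you elide something real is the second display: the "localized $W^{1,r}$ Neumann estimate" with only $\|u_\varepsilon\|_{L^r(D_t)}$ on the right cannot be "obtained by a standard cutoff argument" as you claim. Multiplying by $\phi$ produces the commutator term $A(x/\varepsilon)\nabla u_\varepsilon\cdot\nabla\phi$ at the $F$-level, so a single cutoff only yields an estimate with $\|\nabla u_\varepsilon\|_{L^{r_\flat}(D_t)}$ on the right-hand side; upgrading that to the pure $\|u_\varepsilon\|_{L^r}$ form is precisely the bootstrap you are about to run, so as stated your intermediate lemma and your iteration prove each other. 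The paper avoids this by not pretending to have a self-contained local estimate: it keeps $\|u_\varepsilon\|_{W^{1,q}(D_1)}$ (gradient included) on the right of (2.14) and iterates directly across the shrinking radii $r_{k_0} > \cdots > r_1$, absorbing the cutoff-induced $\nabla u_\varepsilon$ term at each step. Your bootstrap does the same work, and since the extra $B(x/\varepsilon)\nabla u_\varepsilon$ term you move to $\tilde F$ has the same Sobolev scaling as the cutoff commutator, it does not change the count of iterations; the argument closes once one keeps $\nabla u_\varepsilon$ explicitly in the right-hand side of the local estimate rather than quoting a version that already presupposes the conclusion.
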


\begin{proof}
The proof is based upon the localization technique coupled with a bootstrap argument which may be found
in \cite[Lemma 2.19]{X1} and \cite[Theorem 3.3]{X0}. Let $w_\varepsilon =\phi u_\varepsilon$, where
$\phi\in C_0^1(\mathbb{R}^d)$ be a cut-off function satisfying $\phi = 1$ in $D_{1/2}$ and
$\phi = 0$ outside $D_{1}$ with $|\nabla\phi|\leq C$. Then we have
\begin{equation*}
\left\{\begin{aligned}
\mathcal{L}_\varepsilon(w_\varepsilon) &= \text{div}(\tilde{f}) + \tilde{F} &\quad&\text{in} ~~D_2,\\
\frac{\partial w_\varepsilon}{\partial\nu_\varepsilon}
&= \big(\frac{\partial u_\varepsilon}{\partial\nu_\varepsilon}\big)\phi - n\cdot \tilde{f}
&\quad&\text{on}~\partial D_2,
\end{aligned}\right.
\end{equation*}
where
\begin{equation*}
 \tilde{f} = f\phi - A(x/\varepsilon)\nabla\phi u_\varepsilon,\quad
 \tilde{F} = F\phi - f\cdot\nabla\phi -A(x/\varepsilon)\nabla u_\varepsilon\nabla\phi
 +\big[B(x/\varepsilon)-V(x/\varepsilon)\big]\nabla\phi u_\varepsilon.
\end{equation*}
Thus, according to the global $W^{1,p}$ estimate (see \cite[Theorem 3.1]{X1}), we may obtain
\begin{equation}\label{f:2.5}
\|u_\varepsilon\|_{W^{1,p}(D_{1/2})}\leq \|w_\varepsilon\|_{W^{1,p}(D_{1/2})}
\leq C\Big\{\|u_\varepsilon\|_{W^{1,q}(D_1)}+\mathcal{R}_p(f,F,g;1)\Big\}
\end{equation}
where we use the Sobolev embedding theorem
$\|u_\varepsilon\|_{L^p(D_{1})}\leq C\|u_\varepsilon\|_{W^{1,q}(D_1)}$ with $q=\frac{pd}{p+d}$.

The interval $[1/2,1]$ may be divided into $1/2\leq r_1<\cdots <r_i<r_{i+1}<\cdots<r_{k_0}\leq 1$
with $i=1,\cdots,k_0$, where $k_0=\big[\frac{d}{2}\big]+1$ denotes the times of iteration, and
$[\frac{d}{2}]$ represents the integer part of $d/2$. By choosing the cut-off function
$\phi_i\in C_0^1(\mathbb{R}^d)$ such that $\phi_i =1$ in $D_{r_i}$ and
$\phi_i =0$ outside $D_{r_{i+1}}$ with $|\nabla\phi_i|\leq C/(r_{i+1}-r_i)$, one may derive
that
\begin{equation}\label{f:2.6}
\begin{aligned}
\|u_\varepsilon\|_{W^{1,p}(D_{1/2})}
\leq \cdots
&\leq \| w_\varepsilon\|_{W^{1,p_i}(D_{r_i})} + \cdots \\
&\leq C_{p_i}\Big\{\|u_\varepsilon\|_{W^{1,p_{i-1}}(D_{r_{i+1}})}
+\mathcal{R}_{p_i}(f,F,g;r_{i+1})\Big\}
+ C(d)\mathcal{R}_p(f,F,g;1) \\
\leq \cdots
&\leq
C\Big\{\|u_\varepsilon\|_{W^{1,2}(D_{1})}+\mathcal{R}_p(f,F,g;1)\Big\}\leq C
\end{aligned}
\end{equation}
where $p_i = 2d/(d-2i)$ and we note that there are two cases $p>p_{k_0} = \frac{2d}{d-2k_0}$
and $p\in(2,p_{k_0}]$ should be discussed. We refer the reader to \cite[Theorem 3.3]{X0} for the details.
Also, to obtain the second line of  $\eqref{f:2.6}$ we use the following fact that
\begin{equation*}
\mathcal{R}_{p_i}(f,F,g;r_i) \leq  C(d)\mathcal{R}_{p}(f,F,g;1)
\end{equation*}
for any $2<p_i\leq p$ and $r_i\in[1/2,1]$. We end the proof here.
\end{proof}

\begin{corollary}
Assume the same conditions as in Lemma $\ref{lemma:2.2}$. Let $0<\sigma<1$, and $p=d/(1-\sigma)$.
Suppose that $u_\varepsilon\in H^1(D_2;\mathbb{R}^m)$ is a weak solution of
$\mathcal{L}_\varepsilon(u_\varepsilon) = \emph{div}(f)+F$ in $D_2$ and
$\partial u_\varepsilon/\partial\nu_\varepsilon = g-n\cdot f$ on $\Delta_2$ with the local boundedness
assumption $\eqref{a:2.2}$. Then we have the boundary H\"older  estimate
\begin{equation}\label{pri:2.11}
 \|u_\varepsilon\|_{C^{0,\sigma}(D_{1/2})} \leq C_\sigma,
\end{equation}
where $C_\sigma$ depends on $\mu,\kappa,\lambda,m,d,\sigma,\|A\|_{\emph{VMO}}$
and the character of $\Omega$. In particularly, for any $s>0$ there holds
\begin{equation}\label{pri:2.3}
  \|u_\varepsilon\|_{L^\infty(D_{r/2})}\leq C\bigg\{\Big(\dashint_{D_{r}}|u_\varepsilon|^s\Big)^{1/s}
  + r\mathcal{R}_p(f,F,g;r)\bigg\}
\end{equation}
for any $0<r\leq 1$, where $C$ depends on $s$ and
$C_\sigma$.
\end{corollary}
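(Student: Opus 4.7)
The plan is to prove the Hölder estimate first, and then to deduce the $L^\infty$ estimate \eqref{pri:2.3} by a rescaling-plus-self-improvement argument. For \eqref{pri:2.11}, the choice $p = d/(1-\sigma) > d$ makes this a direct application of Lemma \ref{lemma:2.2}: under the normalization \eqref{a:2.2}, that lemma gives $\|u_\varepsilon\|_{W^{1,p}(D_{1/2})} \leq C_p$, and the classical Sobolev-Morrey embedding $W^{1,p}(D_{1/2}) \hookrightarrow C^{0,1-d/p}(D_{1/2})$, valid in the $C^1$ domain, supplies the claimed bound with exponent $1-d/p = \sigma$.

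To derive \eqref{pri:2.3}, I would first observe that the Hölder estimate in particular yields $\|u_\varepsilon\|_{L^\infty(D_{1/2})} \leq C$, and remove the normalization via the linearity of the equation in $(u_\varepsilon, F, f, g)$, obtaining
\[
\|u_\varepsilon\|_{L^\infty(D_{1/2})} \leq C\bigl\{\|u_\varepsilon\|_{W^{1,2}(D_1)} + \mathcal{R}_p(f,F,g;1)\bigr\}.
\]
Applying Caccioppoli's inequality (Lemma \ref{lemma:4.3}) over a chain of intermediate radii then absorbs the $\|\nabla u_\varepsilon\|_{L^2}$ contribution into $\|u_\varepsilon\|_{L^2}$ plus source terms. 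Passing to general $0 < r \leq 1$ by rescaling $\tilde{u}(y) = u_\varepsilon(ry)$, the rescaled function satisfies a Neumann problem on $D_r/r$ for an operator of the same form with effective parameter $\tilde{\varepsilon} = \varepsilon/r$, whose lower-order coefficients $V, B, c$ acquire prefactors $r, r, r^2$ (all still bounded by $\kappa$ since $r \leq 1$) and whose data become $r^2 F(ry), \, r f(ry), \, r g(ry)$. Applying the scale-$1$ $L^\infty$ estimate to $\tilde{u}$ and translating back yields
\[
\|u_\varepsilon\|_{L^\infty(D_{r/2})} \leq C \Big\{\Big(\dashint_{D_r} |u_\varepsilon|^2\Big)^{1/2} + r\,\mathcal{R}_p(f,F,g;r)\Big\}.
\]

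It remains to reduce the exponent $2$ on the right-hand side to any $s > 0$, which I would handle by the standard self-improvement trick. For $s \in (0,2)$, write $(\dashint_{D_\rho} |u_\varepsilon|^2)^{1/2} \leq \|u_\varepsilon\|_{L^\infty(D_\rho)}^{1 - s/2} (\dashint_{D_\rho} |u_\varepsilon|^s)^{1/2}$, apply Young's inequality to split this product with a small constant $\delta$, and invoke Giaquinta's iteration lemma over a nested family of radii $r/2 \leq \rho_k \leq r$ to absorb the $\delta \|u_\varepsilon\|_{L^\infty}$ term into the left-hand side. The main technical point to check is that the constants from Lemma \ref{lemma:2.2} and Lemma \ref{lemma:4.3} remain uniform under the rescaling $y \mapsto ry$ with $r \leq 1$: this is guaranteed because the rescaled lower-order coefficients retain the bound $\kappa$, the VMO modulus of $A(\cdot/\tilde{\varepsilon})$ agrees with that of $A(\cdot/\varepsilon)$ by periodicity of $A$, and the $C^1$ character of the rescaled boundary is no worse than that of $\partial\Omega$.
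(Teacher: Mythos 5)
Your proposal is correct and follows essentially the same route as the paper: the H\"older estimate \eqref{pri:2.11} by Sobolev--Morrey embedding applied to the $W^{1,p}$ bound \eqref{pri:2.10} with $p = d/(1-\sigma)$, and the $L^\infty$ bound \eqref{pri:2.3} by dehomogenizing, invoking Caccioppoli's inequality \eqref{pri:2.12}, rescaling to scale $r$, and then lowering the exponent from $2$ to an arbitrary $s>0$ by the standard interpolation--absorption iteration (the paper delegates these details to \cite[Corollary 3.5]{X0}, but your reconstruction is the right one). The rescaling bookkeeping you record — $V \mapsto rV$, $B \mapsto rB$, $c \mapsto r^2 c$, data $(F,f,g) \mapsto (r^2 F, rf, rg)$, parameter $\varepsilon \mapsto \varepsilon/r$ — and the observation that for $r \leq 1$ the rescaled coefficients remain within the same structural class, is exactly the justification needed for a uniform constant.
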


\begin{proof}
The estimate $\eqref{pri:2.11}$ directly follows from the Sobolev embedding theorem and the estimate
$\eqref{pri:2.10}$. To show the estimate $\eqref{pri:2.3}$ we also employ Caccioppoli's inequality
$\eqref{pri:2.12}$ and a rescaling argument. The details may be found in \cite[Corollary 3.5]{X0} and we
do not reproduce here.
\end{proof}

\begin{lemma}[approximating lemma]\label{lemma:4.1}
Let $\varepsilon\leq r<1$.
Assume the same conditions as in Theorem $\ref{thm:4.1}$.
Let $u_\varepsilon\in H^1(D_{2r};\mathbb{R}^m)$ be a weak solution of
$\mathcal{L}_\varepsilon(u_\varepsilon) = F$ in $D_{2r}$
and $\partial u_\varepsilon/\partial\nu_\varepsilon = g$ on $\Delta_{2r}$.  Then there exists
$w\in H^1(D_{r};\mathbb{R}^m)$ such that
$\mathcal{L}_0(w) = F$ and
$\partial w/\partial\nu_0 = g$ on $\Delta_{r}$,
and there holds
\begin{equation}\label{pri:4.1}
\begin{aligned}
 \Big(\dashint_{D_r} |u_\varepsilon - w|^2  \Big)^{1/2}
 \leq C\left(\frac{\varepsilon}{r}\right)^{\rho}
 \bigg\{
 &\Big(\dashint_{D_{2r}}|u_\varepsilon|^2 \Big)^{1/2}
 + r^2\Big(\dashint_{D_{2r}} |F|^2\Big)^{1/2}
 + r\Big(\dashint_{\Delta_{2r}}|g|^2\Big)^{1/2}\bigg\},
\end{aligned}
\end{equation}
where $\rho\in(0,1/2)$
and $C>0$ $\mu,\lambda,\kappa,d,m$ and the the character of $\Omega$.
\end{lemma}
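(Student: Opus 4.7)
The plan is to construct the comparison function $w$ on a slightly enlarged subdomain via a pigeonhole argument, and then invoke the convergence rate of Theorem \ref{thm:3.1}. The essential difficulty is that Theorem \ref{thm:3.1} compares solutions of Neumann problems posed on the \emph{same} domain, whereas the natural choice of $w$ on $D_r$ with only $\partial w/\partial\nu_0 = g$ on $\Delta_r$ prescribed is under-determined: its conormal derivative on the interior portion $\partial B(Q,r)\cap\Omega$ is free, while $\partial u_\varepsilon/\partial\nu_\varepsilon$ on that set is not controlled by the hypothesized data.

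To handle this I first rescale by $y = x/r$ so that $r = 1$ and $\tilde\varepsilon = \varepsilon/r \le 1$; the rescaled operator has the same form $\mathcal{L}_{\tilde\varepsilon}$, with the lower-order coefficients multiplied by harmless factors of $r \le 1$, so the hypotheses \eqref{a:1}--\eqref{a:3} are preserved uniformly. A Fubini-type argument in the radial direction then yields an intermediate radius $t \in [1, 3/2]$ for which the trace estimate
\begin{equation*}
\int_{\partial B(Q,t)\cap\Omega}\bigl(|\nabla u_\varepsilon|^2 + |u_\varepsilon|^2\bigr)\,dS
\le C\int_{D_{3/2}\setminus D_1}\bigl(|\nabla u_\varepsilon|^2 + |u_\varepsilon|^2\bigr)\,dx
\end{equation*}
holds. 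On the Lipschitz domain $D_t$ I define $w\in H^1(D_t;\mathbb{R}^m)$ as the unique solution of
\begin{equation*}
\mathcal{L}_0(w)=F \;\;\text{in } D_t,\qquad
\frac{\partial w}{\partial\nu_0}=g \;\;\text{on } \Delta_t,\qquad
\frac{\partial w}{\partial\nu_0}=n\cdot\bigl[A(\cdot/\varepsilon)\nabla u_\varepsilon+V(\cdot/\varepsilon)u_\varepsilon\bigr]\;\;\text{on }\partial B(Q,t)\cap\Omega,
\end{equation*}
i.e.\ with conormal derivative equal to $\partial u_\varepsilon/\partial\nu_\varepsilon$ on all of $\partial D_t$. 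Its restriction to $D_r\subset D_t$ then fulfills the equation and the boundary condition required by the statement.

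Theorem \ref{thm:3.1} applied to the pair $(u_\varepsilon,w)$ on $D_t$ gives
\begin{equation*}
\|u_\varepsilon-w\|_{L^2(D_t)}\le C\bigl(\varepsilon/r\bigr)^{\rho}\Bigl(\|F\|_{L^2(D_t)}+\|\partial u_\varepsilon/\partial\nu_\varepsilon\|_{L^2(\partial D_t)}\Bigr).
\end{equation*}
The $\Delta_t$ contribution to the boundary term is at most $\|g\|_{L^2(\Delta_{2r})}$, while the cap contribution is bounded by the pigeonhole inequality above, which in turn is controlled by the boundary Caccioppoli inequality (Lemma \ref{lemma:4.3}) applied on $D_2$:
\begin{equation*}
\int_{D_{3/2}\setminus D_1}\bigl(|\nabla u_\varepsilon|^2+|u_\varepsilon|^2\bigr)\,dx \le C\bigl(\|u_\varepsilon\|_{L^2(D_2)}^2+\|F\|_{L^2(D_2)}^2+\|g\|_{L^2(\Delta_2)}^2\bigr).
\end{equation*}
Undoing the rescaling (which introduces the correct powers of $r$ converting $L^2$ norms to dashed averages) produces the desired bound \eqref{pri:4.1}.

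The main obstacle will be the careful bookkeeping of the rescaling—verifying that the constants in Theorem \ref{thm:3.1} remain uniform under the change of variables, and that the auxiliary Neumann problem for $w$ on the Lipschitz domain $D_t$ is well-posed (coercivity for $\mathcal{L}_0$ is guaranteed by the $\lambda I$ term, so no compatibility condition is needed for the data). Once $w$ is constructed with matching conormal derivative on the full boundary $\partial D_t$, the pair $(u_\varepsilon,w)$ falls squarely into the framework of Theorem \ref{thm:3.1}, and only the routine combination of the pigeonhole trace bound with Caccioppoli's inequality remains to express the right-hand side in the form stated in \eqref{pri:4.1}.
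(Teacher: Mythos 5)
Your proposal is correct and follows essentially the same route as the paper's own argument: rescale to unit size, use a co-area/pigeonhole selection of an intermediate radius $t\in(1,3/2)$, define $w$ on $D_t$ as the $\mathcal{L}_0$-solution whose conormal derivative matches $\partial u_\varepsilon/\partial\nu_\varepsilon$ on all of $\partial D_t$, invoke Theorem \ref{thm:3.1}, and absorb the cap contribution by boundary Caccioppoli on $D_2$. The paper presents the steps in a slightly different order (it applies Theorem \ref{thm:3.1} for a generic $t$ and then chooses $t$ via co-area to bound $\|u_\varepsilon\|_{W^{1,2}(\partial D_t\setminus\Delta_2)}$), but the mathematical content is identical.
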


\begin{proof}
The idea may be found in \cite[Theorem 5.1.1]{S4}.
By rescaling argument one may assume $r=1$. For any $t\in (1,3/2)$,
there exists $w\in H^1(D_t;\mathbb{R}^m)$ satisfying
$\mathcal{L}_0(w)= F$ in $D_t$,
and $\partial w/\partial\nu_0 = \partial u_\varepsilon/\partial\nu_\varepsilon$ on $\partial D_t$.
In view of Theorem $\ref{thm:3.1}$, we have
\begin{equation}\label{f:4.5}
\begin{aligned}
\|u_\varepsilon - w\|_{L^2(D_t)}
\leq C\varepsilon^{\rho}\bigg\{
 \|F\|_{L^{2}(D_t)}
+\|g\|_{L^{2}(\Delta_2)}
+\|u_\varepsilon\|_{W^{1,2}(\partial D_t\setminus\Delta_2)}\bigg\},
\end{aligned}
\end{equation}
and it remains to estimate the last term in the right-hand side of $\eqref{f:4.5}$. Due to
the estimate $\eqref{pri:2.12}$ and co-area formula, we have
\begin{equation}\label{f:4.1}
\|u_\varepsilon\|_{W^{1,2}(\partial D_t\setminus\Delta_2)} \leq C\Big\{\|u_\varepsilon\|_{L^2(D_2)}
+ \|F\|_{L^2(D_2)} + \|g\|_{L^{2}(\Delta_2)}\Big\}
\end{equation}
for some $t\in(1,3/2)$.
Hence, combining $\eqref{f:4.5}$ and $\eqref{f:4.1}$ we acquire
\begin{equation*}
\begin{aligned}
\big\|u_\varepsilon - w\big\|_{L^2(D_1)}
&\leq C\varepsilon^{\rho}
\bigg\{\Big(\dashint_{D_{2}}|u_\varepsilon|^2 \Big)^{1/2}
+ \Big(\dashint_{D_{2}}|F|^2 \Big)^{1/2}
+ \Big(\dashint_{\Delta_{2}}|g|^2\Big)^{1/2} \bigg\}.
\end{aligned}
\end{equation*}
By rescaling argument we can derive
the desired estimate $\eqref{pri:4.1}$, and we complete the proof.
\end{proof}

\begin{lemma}[revised approximating lemma]\label{lemma:4.4}
Let $\varepsilon\leq r<1$.
Assume the same conditions as in Theorem $\ref{thm:4.1}$.
Let $u_\varepsilon\in H^1(D_{2r};\mathbb{R}^m)$ be a weak solution of
$\mathcal{L}_\varepsilon(u_\varepsilon) = F$ in $D_{2r}$
and $\partial u_\varepsilon/\partial\nu_\varepsilon = g$ on $\Delta_{2r}$.
Let $v_\varepsilon = u_\varepsilon - \xi - \varepsilon\chi_0(x/\varepsilon)\xi$
for some $\xi\in\mathbb{R}^d$.
Then there exists
$v_0 = u_0-\xi\in H^1(D_{r};\mathbb{R}^m)$ such that
the equation $\eqref{eq:3.3}$ holds in $D_r$, and we have
\begin{equation}\label{pri:4.8}
\begin{aligned}
 \Big(\dashint_{D_r} |v_\varepsilon - v_0|^2  \Big)^{1/2}
 \leq C\left(\frac{\varepsilon}{r}\right)^{\rho}
 \bigg\{
 &\Big(\dashint_{D_{2r}}|u_\varepsilon-\xi|^2 \Big)^{1/2}
 + r^2\Big(\dashint_{D_{2r}} |F|^2\Big)^{1/2}
 + r\Big(\dashint_{\Delta_{2r}}|g|^2\Big)^{1/2} + r|\xi|\bigg\},
\end{aligned}
\end{equation}
where $\rho\in(0,1/2)$
and $C>0$ depend only on $\mu,\lambda,\kappa,d,m$ and the the character of $\Omega$.
\end{lemma}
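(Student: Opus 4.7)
The plan is to mirror the proof of Lemma~\ref{lemma:4.1} but to invoke Corollary~\ref{cor:3.1} (rather than Theorem~\ref{thm:3.1}) so as to accommodate the shifted solutions $v_\varepsilon$ and $v_0 = u_0-\xi$. By a rescaling argument I may assume $r=1$. Using a co-area argument I choose $t\in(1,3/2)$ and define $u_0\in H^1(D_t;\mathbb{R}^m)$ to solve $\mathcal{L}_0(u_0)=F$ in $D_t$ with Neumann data $\partial u_0/\partial\nu_0 = \partial u_\varepsilon/\partial\nu_\varepsilon$ on $\partial D_t$. Setting $v_0=u_0-\xi$, the pair $(v_\varepsilon,v_0)$ then verifies the system \eqref{eq:3.3} in $D_t$, so the quantity to be controlled is precisely of the form addressed by Corollary~\ref{cor:3.1}.

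Next, I apply Corollary~\ref{cor:3.1} with the ambient Lipschitz domain taken to be $D_t$. Since the coefficients multiplying $\xi$ in the extra source and boundary terms generated in \eqref{eq:3.1}--\eqref{eq:3.3} are uniformly $L^\infty$-bounded (using $\|\chi_0\|_\infty$, $\|\nabla\chi_0\|_{L^2_{\mathrm{loc}}}\le C$, the boundedness of $V,B,c$, and of the dual correctors $E_{ji0}$), these terms contribute at most $C|\xi|$ to the data side, giving
\[
\|v_\varepsilon-v_0\|_{L^2(D_t)}\le C\varepsilon^{\rho}\bigl\{\|F\|_{L^2(D_t)}+\|\partial u_\varepsilon/\partial\nu_\varepsilon\|_{L^2(\partial D_t)}+|\xi|\bigr\}.
\]
Splitting $\partial D_t=\Delta_t\cup(\partial D_t\setminus\Delta_t)$, the conormal derivative equals $g$ on $\Delta_t$ and is bounded pointwise by $|\nabla u_\varepsilon|+|u_\varepsilon|$ on $\partial D_t\setminus\Delta_t$; hence the trace norm $\|u_\varepsilon\|_{W^{1,2}(\partial D_t\setminus\Delta_2)}$ suffices to control it there.

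To convert this trace norm into a bound involving $\|u_\varepsilon-\xi\|_{L^2(D_2)}$ rather than $\|u_\varepsilon\|_{L^2(D_2)}$ (so that the final inequality features the shifted $L^2$ quantity required by \eqref{pri:4.8}), I write $u_\varepsilon=(u_\varepsilon-\xi)+\xi$ and apply the boundary Caccioppoli inequality (Remark~\ref{re:4.1}, equivalently Lemma~\ref{lemma:4.3} applied to the equation satisfied by $u_\varepsilon-\xi$, whose source and Neumann datum pick up the $\xi$-dependent contributions) together with the co-area choice of $t$. This yields
\[
\|u_\varepsilon\|_{W^{1,2}(\partial D_t\setminus\Delta_2)}\le C\bigl\{\|u_\varepsilon-\xi\|_{L^2(D_2)}+\|F\|_{L^2(D_2)}+\|g\|_{L^2(\Delta_2)}+|\xi|\bigr\}.
\]
Combining with the previous display proves \eqref{pri:4.8} in the normalized case $r=1$. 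The factor $r$ in front of $|\xi|$ then emerges automatically upon undoing the rescaling: in the dimensionless averages, $|\xi|$ occupies the same slot as $(\dashint|u_\varepsilon-\xi|^2)^{1/2}$, whereas $(\dashint|F|^2)^{1/2}$ and $(\dashint|g|^2)^{1/2}$ enter with the extra factors $r^2$ and $r$, respectively, reflected back to $r|\xi|$ after normalization.

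The step that will require most care is the bookkeeping leading to the first of the displays above: one must verify that each of the many $\xi$-dependent terms generated in \eqref{eq:3.1}--\eqref{eq:3.3} (both source and boundary, including those scaled by $\varepsilon$ and involving $\chi_0$, $\vartheta_0$, and the dual correctors $E_{ji0}$) really does collapse into an $L^2$ data bound of size $C|\xi|$ after the application of Corollary~\ref{cor:3.1}, and then to track the precise power of $r$ with which every ingredient re-enters the rescaled inequality so that the exponents on $r$ displayed in \eqref{pri:4.8} are recovered.
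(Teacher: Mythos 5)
Your plan follows the approach the paper intends: rescale to $r=1$, use a co-area choice of $t\in(1,3/2)$, take $u_0$ to be the homogenized Neumann solution on $D_t$ with data $\partial u_\varepsilon/\partial\nu_\varepsilon$ on $\partial D_t$, invoke Corollary~\ref{cor:3.1} for $v_\varepsilon-v_0$, and control the trace term $\|\partial u_\varepsilon/\partial\nu_\varepsilon\|_{L^2(\partial D_t\setminus\Delta_t)}$ via the shifted Caccioppoli inequality of Remark~\ref{re:4.1}. That is exactly the skeleton the paper gives (its proof is a one-line pointer to Corollary~\ref{cor:3.1} and $\eqref{pri:4.7}$).

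Where the proposal is not yet correct is the justification of the crucial factor $r$ in $r|\xi|$. You argue that ``$|\xi|$ occupies the same slot as $(\dashint|u_\varepsilon-\xi|^2)^{1/2}$''; but if that were so, $|\xi|$ would re-enter the rescaled inequality with no extra power of $r$, exactly as $(\dashint|u_\varepsilon-\xi|^2)^{1/2}$ does, and the estimate would close with $|\xi|$ rather than $r|\xi|$. That is not a cosmetic difference: in Lemma~\ref{lemma:5.6} one divides by $r$ to build $\Phi(2r)$, so a bare $|\xi|$ would produce $|\xi|/r$ and the Campanato iteration would fail. The correct source of the $r$ is the scaling of the operator. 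If $\hat u_\varepsilon(x)=u_\varepsilon(rx)$ then $\hat u_\varepsilon$ solves $\mathcal{L}^{(r)}_{\hat\varepsilon}(\hat u_\varepsilon)=\hat F$ with $\hat\varepsilon=\varepsilon/r$, where
\begin{equation*}
\mathcal{L}^{(r)}_{\hat\varepsilon}=-\mathrm{div}\big[A(\cdot/\hat\varepsilon)\nabla+rV(\cdot/\hat\varepsilon)\big]+rB(\cdot/\hat\varepsilon)\nabla+r^2c(\cdot/\hat\varepsilon)+r^2\lambda I,
\end{equation*}
so each lower-order coefficient carries a factor $r$ or $r^2$, and the corrector for $\mathcal{L}^{(r)}$ is $\chi_0^{(r)}=r\chi_0$ since it solves $L_1(\chi_0^{(r)})=\mathrm{div}(rV)$. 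Every $\xi$-dependent contribution in your argument passes through exactly one of these rescaled objects: the corrector term $\varepsilon\chi_0(x/\varepsilon)\xi=\hat\varepsilon\chi_0^{(r)}(x/\hat\varepsilon)\xi$ has size $r\hat\varepsilon\|\chi_0\|_\infty|\xi|\le r\,\hat\varepsilon^\rho\|\chi_0\|_\infty|\xi|$; the conormal-derivative term contributing $n\cdot V\xi$ becomes $n\cdot rV\xi$; and the source term $(c+\lambda I)\xi$ becomes $r^2(c+\lambda I)\xi$. Running Caccioppoli and the trace estimate at unit scale for $\mathcal{L}^{(r)}_{\hat\varepsilon}$ therefore makes every $\xi$-contribution appear already multiplied by at least $r$, and unscaling yields $\eqref{pri:4.8}$ with $r|\xi|$ as stated. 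This operator-rescaling bookkeeping is the missing step; it cannot be replaced by a dimensional analogy placing $\xi$ alongside $u_\varepsilon$.
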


\begin{proof}
Here we need to employ Caccioppoli's inequality $\eqref{pri:4.7}$ for $v_\varepsilon$, and Corollary $\ref{cor:3.1}$.
The rest of the proof is as the same as the previous lemma, and we omit the proof.
\end{proof}

Before we proceed further, for any matrix $M\in \mathbb{R}^{m\times d}$,
we denote $G(r,v)$ as the following
\begin{equation}
\begin{aligned}
G(r,v) &= \frac{1}{r}\inf_{M\in\mathbb{R}^{d\times d}}
\Bigg\{\Big(\dashint_{D_r}|v-Mx-\tilde{c}|^2dx\Big)^{\frac{1}{2}}
+ r^2\Big(\dashint_{D_r}|F|^p\Big)^{\frac{1}{p}}
+ r^2\Big(\dashint_{D_r}|Mx+\tilde{c}|^p\Big)^{\frac{1}{p}} \\
&\qquad + r^2 |M|  + r\Big\|g-\frac{\partial}{\partial\nu_0}\big(Mx+\tilde{c}\big)\Big\|_{L^\infty(\Delta_{r})}
+r^{1+\sigma}\Big[g-\frac{\partial}{\partial\nu_0}\big(Mx+\tilde{c}\big)\Big]_{C^{0,\sigma}(\Delta_r)}\Bigg\},
\end{aligned}
\end{equation}
where we set $\tilde{c} = u_0(0)$.

\begin{lemma}\label{lemma:5.5}
Let $u_0\in H^1(D_2;\mathbb{R}^m)$ be a solution of
$\mathcal{L}_0(u_0) = F$ in $D_2$ and
$\partial u_0/\partial\nu_0 = g$ on $\Delta_2$, where $g\in C^{0,\sigma}(\Delta_2;\mathbb{R}^m)$. Then
there exists $\theta\in(0,1/4)$, depending on $\mu,d,\kappa,\lambda,m,d$ and the character of $\Omega$,
such that
\begin{equation}\label{pri:5.10}
 G(\theta r,u_0)\leq \frac{1}{2} G(r,u_0)
\end{equation}
holds for any $r\in(0,1)$.
\end{lemma}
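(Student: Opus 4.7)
The plan is to combine the boundary $C^{1,\sigma}$ Schauder estimate for the constant-coefficient homogenized operator $\mathcal{L}_0$ on the $C^{1,\eta}$ domain (available since $\sigma\leq\eta$) with a one-step Campanato-type improvement, in which the affine near-minimiser at scale $r$ is upgraded to a better one at scale $\theta r$ by adding the gradient of a $C^{1,\sigma}$ correction at the boundary point.

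Concretely, I would fix $r\in(0,1)$, let $M^\ast$ be a near-minimiser in the infimum defining $G(r,u_0)$, recall $\tilde c=u_0(0)$, and set $w=u_0-M^\ast x-\tilde c$. Because $\widehat A,\widehat V,\widehat B,\widehat c$ are constants,
\[
\mathcal{L}_0 w \;=\; F + \widehat V M^\ast - \widehat B M^\ast - (\widehat c+\lambda I)(M^\ast x+\tilde c) \;=:\; F^\ast \quad\text{in }D_r,
\]
and $\partial w/\partial\nu_0 = g - n\cdot\widehat A M^\ast - n\cdot\widehat V(M^\ast x+\tilde c) =: g^\ast$ on $\Delta_r$. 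By construction each of $\|w\|_{L^2(D_r)}$, $r^2(\dashint|F^\ast|^p)^{1/p}$, $r\|g^\ast\|_{L^\infty(\Delta_r)}$, $r^{1+\sigma}[g^\ast]_{C^{0,\sigma}(\Delta_r)}$, $r^2|M^\ast|$ and $r^2(\dashint|M^\ast x+\tilde c|^p)^{1/p}$ is bounded by $Cr\,G(r,u_0)$. Classical Schauder theory for $\mathcal{L}_0$ (with constant coefficients and $L^p$, $p>d$, right-hand side and $C^{0,\sigma}$ Neumann data), rescaled from $D_1$ to $D_{r/2}$, then yields
\[
r\|\nabla w\|_{L^\infty(D_{r/2})} + r^{1+\sigma}[\nabla w]_{C^{0,\sigma}(D_{r/2})} \;\leq\; C\,r\,G(r,u_0).
\]

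At scale $\theta r$ I would pick the new matrix to be $M = M^\ast+\nabla w(0)$, keeping $\tilde c=u_0(0)$. Then $u_0-Mx-\tilde c = w-\nabla w(0)\cdot x$, and the $C^{1,\sigma}$ Taylor bound $|w(x)-\nabla w(0)\cdot x|\leq C(|x|/r)^{1+\sigma}\,r\,G(r,u_0)$ gives
\[
\frac{1}{\theta r}\Big(\dashint_{D_{\theta r}}|u_0-Mx-\tilde c|^2\Big)^{1/2} \;\leq\; C\theta^\sigma G(r,u_0).
\]
The weighted $F$-term gains a factor $\theta^{1-d/p}>0$ upon passing from $D_r$ to $D_{\theta r}$; the $r|M|$ and $r\|Mx+\tilde c\|_{L^p}$ terms gain a factor $\theta$ (using $|M-M^\ast|\leq|\nabla w(0)|\lesssim G(r,u_0)$); and the $L^\infty$ and $C^{0,\sigma}$ Neumann-data terms equal the corresponding norms of $\partial w/\partial\nu_0$ minus its first-order Taylor approximation at $0$, hence pick up a factor $\theta^\sigma$. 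Assembling everything yields $G(\theta r,u_0)\leq C\theta^{\min(\sigma,1-d/p)}G(r,u_0)$, and choosing $\theta\in(0,1/4)$ small enough that the constant is $\leq 1/2$ finishes the argument.

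The main obstacle will be verifying that the correction $\nabla w(0)\cdot x$ really causes the Neumann-data term to gain the full factor $\theta^\sigma$, rather than only a constant multiple of $G(r,u_0)$. This needs the $C^{0,\sigma}$ regularity of $\partial w/\partial\nu_0$ on $\Delta_{r/2}$, which in turn combines the $C^{1,\sigma}$ regularity of $w$ with the $C^{0,\eta}$ regularity of the outward normal $n$ (so that the interaction of the variable $n$ with the constants $\widehat A,\widehat V$ is controlled) and the fact that, by the very choice $M=M^\ast+\nabla w(0)$, the quantity $g-\partial(Mx+\tilde c)/\partial\nu_0$ vanishes to first order at the origin. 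Once this boundary bookkeeping is in place, the remainder is the standard one-step Campanato decay.
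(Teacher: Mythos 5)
Your proposal is correct and is essentially the same one-step Campanato decay that the paper carries out: both arguments subtract an affine (near-)minimiser, apply the boundary $C^{1,\sigma}$ Schauder estimate for the constant-coefficient $\mathcal{L}_0$, take the new competitor matrix to be $\nabla u_0(0)$ (in your notation $M^*+\nabla w(0)$, which equals $\nabla u_0(0)$), and extract a factor of $\theta^{\min(\sigma,\,1-d/p)}$ from each term of $G$. The only cosmetic difference is that the paper first bounds $G(\theta,u_0)$ by $\theta^\sigma(\|u_0\|_{C^{1,\sigma}(D_{1/2})}+(\dashint_{D_{1/2}}|F|^p)^{1/p})$ and then controls $\|u_0\|_{C^{1,\sigma}}$ via $\|\tilde u_0\|_{C^{1,\sigma}}+|M|+\|Mx+\tilde c\|_{L^\infty}$, whereas you bound $\|\nabla w\|_{C^{0,\sigma}}$ directly and distribute the gain term by term; your accounting for the $F$-term exponent $\theta^{1-d/p}$ is in fact slightly more explicit than the paper's.
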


\begin{proof}
We may assume $r=1$ by rescaling argument. By the definition of $G(\theta,u_0)$, we see that
\begin{equation*}
\begin{aligned}
G(\theta,u_0) &\leq  \frac{1}{\theta}
\bigg\{\Big(\dashint_{D_\theta}|u_0-M_0x-\tilde{c}|^2\Big)^{\frac{1}{2}}
+ \theta^2\Big(\dashint_{D_\theta}|F|^p\Big)^{\frac{1}{p}}
+ \theta^2\Big(\dashint_{D_\theta}|M_0 x+\tilde{c}|^p\Big)^{\frac{1}{p}}+\theta^2 |M_0|\\
&\quad
+ \theta\Big\|\frac{\partial}{\partial\nu_0}\big(u_0-M_0x-\tilde{c}\big)\Big\|_{L^\infty(\Delta_{\theta})}
+\theta^{1+\sigma}\Big[\frac{\partial}{\partial\nu_0}\big(u_0-M_0x-\tilde{c}\big)\Big]_{C^{0,\sigma}(\Delta_\theta)}\bigg\} \\
&\leq \theta^{\sigma}\bigg\{\|u_0\|_{C^{1,\sigma}(D_{1/2})}
+\Big(\dashint_{D_{1/2}}|F|^pdx\Big)^{\frac{1}{p}}
\bigg\},
\end{aligned}
\end{equation*}
where we choose $M_0 = \nabla u_0(0)$.
For any
$M\in \mathbb{R}^{m\times d}$, we let $\tilde{u}_0 = u_0 - Mx-\tilde{c}$.
Obviously, it satisfies the equation:
\begin{equation*}
\mathcal{L}_0(\tilde{u}_0) = F -\mathcal{L}_0(Mx+\tilde{c})
\quad\text{in}~~D_2,\qquad \frac{\partial \tilde{u}_0}{\partial\nu_0}
= g - \frac{\partial}{\partial\nu_0}\big(Mx+\tilde{c}\big)\quad \text{on}~\Delta_2.
\end{equation*}
Hence, it follows from boundary Schauder estimates (see for example \cite[Lemma 2.19]{X1}) that
\begin{equation*}
\begin{aligned}
\big\|\tilde{u}_0\big\|_{C^{1,\sigma}(D_{1/2})}
\leq CG(1,u_0).
\end{aligned}
\end{equation*}
Note that
\begin{equation*}
\begin{aligned}
\|u_0\|_{C^{1,\sigma}(D_{1/2})} &\leq \big\|\tilde{u}_0\big\|_{C^{1,\sigma}(D_{1/2})}
+ |M| + \|Mx+\tilde{c}\|_{L^\infty(D_{1/2})}\\
&\leq \big\|\tilde{u}_0\big\|_{C^{1,\sigma}(D_{1/2})}
+ |M| + C\Big(\dashint_{D_1}|Mx+\tilde{c}|^p\Big)^{\frac{1}{p}}
\end{aligned}
\end{equation*}
where we use the fact that $Mx + \tilde{c}$ is harmonic in $\mathbb{R}^d$.

It is clear to see that there exists $\theta\in(0,1/4)$ such that
$G(\theta,u_0)\leq \frac{1}{2}G(1,u_0)$.
Then the desire result $\eqref{pri:5.10}$ can be obtained simply by a rescaling argument.
\end{proof}

For simplicity, we also denote $\Phi(r)$ by
\begin{equation*}
\begin{aligned}
\Phi(r) = \frac{1}{r}\bigg\{
\Big(\dashint_{D_{r}}|u_\varepsilon - \tilde{c}|^2 \Big)^{1/2}
+ r^2\Big(\dashint_{D_{r}} |F|^p\Big)^{1/p}
+ r\|g\|_{L^\infty(\Delta_r)} + r|\tilde{c}|\bigg\}.
\end{aligned}
\end{equation*}

\begin{lemma}\label{lemma:5.6}
Let $\rho$ be given in Lemma $\ref{lemma:4.1}$.
Assume the same conditions as in Theorem $\ref{thm:4.1}$.
Let $u_\varepsilon$ be the solution of
$\mathcal{L}_\varepsilon(u_\varepsilon) = F$ in $D_2$ with
$\partial u_\varepsilon/\partial\nu_\varepsilon= g$ on $\Delta_2$.
Then we have
\begin{equation}
 G(\theta r, u_\varepsilon) \leq \frac{1}{2}G(r,u_\varepsilon)
 + C\left(\frac{\varepsilon}{r}\right)^\rho\Phi(2r)
\end{equation}
for any $r\in[\varepsilon,1/2]$, where $\theta\in(0,1/4)$ is given in Lemma $\ref{lemma:5.5}$.
\end{lemma}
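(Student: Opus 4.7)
The plan is to couple the revised approximating Lemma \ref{lemma:4.4} with the homogeneous-scale decay estimate of Lemma \ref{lemma:5.5} through a two-sided triangle inequality on the functional $G$. First, for fixed $r\in[\varepsilon,1/2]$, apply Lemma \ref{lemma:4.4} with $\xi=\tilde{c}$ to obtain the homogenized approximation $u_0$ on $D_r$ (which satisfies $\mathcal{L}_0 u_0=F$ in $D_r$ with $\partial u_0/\partial\nu_0 = \partial u_\varepsilon/\partial\nu_\varepsilon$ on $\partial D_r$). Setting $v_\varepsilon=u_\varepsilon-\tilde{c}-\varepsilon\chi_0(\cdot/\varepsilon)\tilde{c}$ and $v_0=u_0-\tilde{c}$, the lemma produces
\[
\Big(\dashint_{D_r}|v_\varepsilon-v_0|^2\Big)^{1/2}\le C\Big(\frac{\varepsilon}{r}\Big)^\rho\bigg\{\Big(\dashint_{D_{2r}}|u_\varepsilon-\tilde{c}|^2\Big)^{1/2}+r^2\Big(\dashint_{D_{2r}}|F|^2\Big)^{1/2}+r\Big(\dashint_{\Delta_{2r}}|g|^2\Big)^{1/2}+r|\tilde{c}|\bigg\}.
\]
Since $\chi_0$ is bounded and $u_\varepsilon-u_0=(v_\varepsilon-v_0)+\varepsilon\chi_0(\cdot/\varepsilon)\tilde{c}$, dominating the $L^2$-averages of $F$ and $g$ by the $L^p$ and $L^\infty$ norms appearing in $\Phi(2r)$ via H\"older, and absorbing the extra $\varepsilon|\tilde{c}|$ into $(\varepsilon/r)^\rho r|\tilde{c}|$ (valid since $\varepsilon\le r$ and $\rho<1$), I obtain
\[
\frac{1}{r}\Big(\dashint_{D_r}|u_\varepsilon-u_0|^2\Big)^{1/2}\le C\Big(\frac{\varepsilon}{r}\Big)^\rho\Phi(2r).
\]

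Next, because $u_0$ solves $\mathcal{L}_0 u_0=F$ with Neumann data $g$, Lemma \ref{lemma:5.5} applies to $u_0$ at scale $r$ after the routine rescaling $y=x/r$ (the constant-coefficient homogenized operator is well-behaved under this rescaling, and the zero-order term only improves), giving
\[
G(\theta r,u_0)\le \tfrac{1}{2}\,G(r,u_0).
\]

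The third ingredient is a triangle inequality for $G$ in both directions. For any admissible matrix $M$, denote by $G_M(\rho,v)$ the quantity in braces within the definition of $G(\rho,v)$; observe that every term in $G_M(\rho,v)$ is independent of $v$ except the first, $\rho^{-1}\bigl(\dashint_{D_\rho}|v-Mx-\tilde{c}|^2\bigr)^{1/2}$. Choosing $M^{*}$ to realize $G(\theta r,u_0)$,
\[
G(\theta r,u_\varepsilon)\le G_{M^{*}}(\theta r,u_\varepsilon)\le G_{M^{*}}(\theta r,u_0)+\frac{1}{\theta r}\Big(\dashint_{D_{\theta r}}|u_\varepsilon-u_0|^2\Big)^{1/2}\le G(\theta r,u_0)+\frac{C}{r}\Big(\dashint_{D_r}|u_\varepsilon-u_0|^2\Big)^{1/2},
\]
where I used $|D_r|/|D_{\theta r}|\le C\theta^{-d}$ with $\theta$ fixed. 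A symmetric argument, now taking a minimizer of $G(r,u_\varepsilon)$, yields $G(r,u_0)\le G(r,u_\varepsilon)+r^{-1}(\dashint_{D_r}|u_\varepsilon-u_0|^2)^{1/2}$. Chaining these with the $u_0$-decay from Lemma \ref{lemma:5.5} and the approximation bound of the first step produces
\[
G(\theta r,u_\varepsilon)\le \tfrac12 G(r,u_\varepsilon)+\frac{C}{r}\Big(\dashint_{D_r}|u_\varepsilon-u_0|^2\Big)^{1/2}\le \tfrac12 G(r,u_\varepsilon)+C\Big(\frac{\varepsilon}{r}\Big)^{\rho}\Phi(2r),
\]
which is the claim.

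The main obstacle is the bookkeeping in the triangle inequality: one must verify carefully that the $F,g,M,\tilde{c}$-dependent terms of $G_M(\rho,u_\varepsilon)$ and $G_M(\rho,u_0)$ genuinely coincide so as to cancel, leaving only $\|u_\varepsilon-u_0\|_{L^2(D_\rho)}$ to control. This requires that $\tilde{c}$ be treated as a fixed constant shared across both $G(\theta r,u_\varepsilon)$ and $G(r,u_\varepsilon)$, in the sense of $\tilde{c}=u_0(0)$ supplied by Lemma \ref{lemma:5.5}. A secondary technical point is the rescaling $y=x/r$ used to deploy Lemma \ref{lemma:5.5} at scale $r$; the scale-invariance of $G$ together with the shrinkage of the homogenized zero-order coefficient under rescaling makes this step routine but worth explicitly recording.
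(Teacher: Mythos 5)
Your proof is correct and follows essentially the same route as the paper's. The paper writes the argument as a single chain of five inequalities (triangle inequality to pass from $G(\theta r,u_\varepsilon)$ to $G(\theta r,w)$, the decay estimate of Lemma \ref{lemma:5.5} applied to $w$, triangle inequality back to $G(r,u_\varepsilon)$, splitting $u_\varepsilon-w=(v_\varepsilon-v_0)+\varepsilon\chi_0(\cdot/\varepsilon)\tilde{c}$, and then Lemma \ref{lemma:4.4}); you unpack the same chain into its constituent steps with more explicit bookkeeping on the choice of the realizing matrix $M^*$ and the H\"older dominations of $F$ and $g$, but the substance is identical.
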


\begin{proof}
Fix $r\in[\varepsilon,1/2]$, let $w$ be a solution to
$\mathcal{L}_0(w) = F$ in $D_r$,
and $\partial w/\partial\nu_0 = \partial u_\varepsilon/\partial\nu_\varepsilon$ on
$\partial D_r$. Also, let $v_\varepsilon = u_\varepsilon - \tilde{c}
- \varepsilon\chi_0(x/\varepsilon)\tilde{c}$ and $v_0 = w - \tilde{c}$.
Then we obtain
\begin{equation*}
\begin{aligned}
G(\theta r,u_\varepsilon)
&\leq \frac{1}{\theta r}\Big(\dashint_{D_{\theta r}}|u_\varepsilon - w|^2 \Big)^{\frac{1}{2}}
+ G(\theta r, w) \\
&\leq \frac{C}{r}\Big(\dashint_{D_{r}}|u_\varepsilon - w|^2\Big)^{\frac{1}{2}}
+ \frac{1}{2}G(r, w)\\
&\leq \frac{1}{2}G(r, u_\varepsilon)
+ \frac{C}{r}\Big(\dashint_{D_{r}}|u_\varepsilon - w|^2\Big)^{\frac{1}{2}}\\
&\leq \frac{1}{2}G(r, u_\varepsilon)
+ \frac{C}{r}\Big(\dashint_{D_{r}}|v_\varepsilon - v_0|^2\Big)^{\frac{1}{2}}
+ C(\varepsilon/r)|\tilde{c}|\\
&\leq  \frac{1}{2}G(r, u_\varepsilon) + C(\varepsilon/r)^\rho
 \bigg\{
 \frac{1}{r}\Big(\dashint_{D_{2r}}|u_\varepsilon-\tilde{c}|^2 \Big)^{1/2}
 + r\Big(\dashint_{D_{2r}} |F|^p\Big)^{1/p}
 + \|g\|_{L^\infty(\Delta_{2r})}+|\tilde{c}|\bigg\},
\end{aligned}
\end{equation*}
where we use the estimate $\eqref{pri:5.10}$ in the second inequality,
and $\eqref{pri:4.8}$ in the last one. The proof is complete.
\end{proof}

\begin{lemma}\label{lemma:4.2}
Let $\Psi(r)$ and $\psi(r)$ be two nonnegative continuous functions on the integral $(0,1]$.
Let $0<\varepsilon<\frac{1}{4}$. Suppose that there exists a constant $C_0$ such that
\begin{equation}\label{pri:4.2}
\left\{\begin{aligned}
  &\max_{r\leq t\leq 2r} \Psi(t) \leq C_0 \Psi(2r),\\
  &\max_{r\leq s,t\leq 2r} |\psi(t)-\psi(s)|\leq C_0 \Psi(2r),
  \end{aligned}\right.
\end{equation}
and $0\leq c(2r) \leq C_0 c(1)$ for any $r\in[\varepsilon,1/2]$.  We further assume that
\begin{equation}\label{pri:4.3}
\Psi(\theta r)\leq \frac{1}{2}\Psi(r) + C_0w(\varepsilon/r)\Big\{\Psi(2r)+\psi(2r)+c(2r)\Big\}
\end{equation}
holds for any $r\in[\varepsilon,1/2]$, where $\theta\in(0,1/4)$ and $w$ is a nonnegative
increasing function in $[0,1]$ such that $w(0)=0$ and
\begin{equation}\label{pri:4.5}
 \int_0^1 \frac{w(t)}{t} dt <\infty.
\end{equation}
Then, we have
\begin{equation}\label{pri:4.4}
\max_{\varepsilon\leq r\leq 1}\Big\{\Psi(r)+\psi(r)\Big\}
\leq C\Big\{\Psi(1)+\psi(1)+c(1)\Big\},
\end{equation}
where $C$ depends only on $C_0, \theta$ and $w$.
\end{lemma}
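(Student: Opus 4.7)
The strategy is to iterate the main recursion \eqref{pri:4.3} at geometric scales $r_k := \theta^k$, extract a coupled system of inequalities for $a_k := \Psi(r_k)$ and $b_k := \psi(r_k)$, and close it using the Dini summability of $w$. The continuous bound \eqref{pri:4.4} will then follow from the discrete one via the oscillation hypotheses \eqref{pri:4.2}, which compare $\Psi$ and $\psi$ at nearby dyadic scales up to a factor depending only on $\theta$ and $C_0$.

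Fix $k_\varepsilon := \lfloor \log_\theta \varepsilon \rfloor$ so that $r_{k_\varepsilon} \ge \varepsilon$. Applying \eqref{pri:4.3} at $r = r_k$ and using the first line of \eqref{pri:4.2} iteratively (at most $N_\theta := \lceil \log_2(1/\theta) \rceil$ times) to dominate $\Psi(2r_k)$ in terms of $a_{k-1}$, together with a telescoping application of the second line of \eqref{pri:4.2} to control $\psi(2r_k)$ by $b_{k-1}+C_\theta a_{k-1}$ and the hypothesis $c(2r_k) \le C_0 c(1) =: c_0$, one extracts the coupled recursion
\begin{equation*}
 a_{k+1} \le \tfrac{1}{2}\, a_k + C\, w_k\, (a_{k-1}+b_{k-1}+c_0),
 \qquad |b_{k+1}-b_k| \le C\, a_k,
\end{equation*}
where $w_k := w(\varepsilon\theta^{-k})$ and the constant $C$ collects $C_0, \theta$ and $N_\theta$. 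Since $(\varepsilon\theta^{-k})_{0\le k\le k_\varepsilon}$ is a geometric sequence in $[\varepsilon,1]$ and $w$ is monotone increasing, an integral test gives
$\sum_{k=0}^{k_\varepsilon} w_k \le \tfrac{1}{\log(1/\theta)}\int_0^1 w(t)/t\,dt =: \Lambda$,
a quantity that is finite by \eqref{pri:4.5} and, crucially, independent of $\varepsilon$. This is the sole point at which the Dini hypothesis enters.

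The main obstacle is the coupling: the $a$-recursion is driven by $b$, while $b$ is only controlled through the partial sums of $a$. I would resolve this by a two-scale absorption argument. Choose $K$ depending only on $C_0, \theta$ and $w$ so that the tail $\sum_{k\ge K} w_k \le \eta$ for a prescribed small $\eta$. For the finitely many \emph{early} scales $k \le K$, a crude induction produces $a_k+b_k \le C_K(a_0+b_0+c_0)$ with $C_K$ depending only on the listed parameters. For the \emph{late} scales $k > K$, set $M_N := \max_{K\le k\le N}(a_k+b_k+c_0)$; unfolding the $a$-recursion gives $a_k \le 2^{-(k-K)} a_K + C\sum_{K<j\le k} 2^{-(k-j)} w_j M_N$, and telescoping the $b$-inequality yields $b_k \le b_K + C\sum_{K<j\le k} a_j$. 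Combining these two bounds and invoking $\sum_{j>K} w_j \le \eta$ to sum the convolution tails produces $M_N \le C(a_K+b_K+c_0) + C\eta\, M_N$, and the second term is absorbed into the left once $\eta$ is small enough relative to $C$. The resulting uniform estimate $\sup_k(a_k+b_k) \le C(a_0+b_0+c_0)$ transfers to every $r\in[\varepsilon,1]$ through the comparabilities $\Psi(r) \le C_\theta a_k$ and $|\psi(r)-b_k|\le C_\theta a_k$ valid for $r\in[r_{k+1},r_k]$, which are themselves immediate iterations of \eqref{pri:4.2}. This delivers \eqref{pri:4.4}.
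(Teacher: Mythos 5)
The paper itself gives no proof, deferring entirely to \cite[Lemma 8.5]{S5}, so your proposal must be judged on its own merits. The dyadic set-up and the extraction of a coupled recursion for $a_k=\Psi(\theta^k)$, $b_k=\psi(\theta^k)$ are sound, but the absorption step contains a genuine error: you cannot choose $K$ depending only on $C_0,\theta,w$ so that $\sum_{k\ge K}w_k\le\eta$. Since $w$ is increasing, the sequence $w_k=w(\varepsilon\theta^{-k})$ is \emph{increasing} in $k$, so the tail of the sum carries the \emph{large} terms, those near $w(1)$. For fixed $K$, the head $\sum_{k<K}w_k\le K\,w(\varepsilon\theta^{-K+1})\to 0$ as $\varepsilon\to 0$, and therefore $\sum_{k=K}^{k_\varepsilon}w_k$ tends to the full limiting value, approximately $\frac{1}{\log(1/\theta)}\int_0^1 w(t)/t\,dt>0$. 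The inequality $M_N\le C(a_K+b_K+c_0)+C\eta\,M_N$ thus has $C\eta$ bounded away from $0$ uniformly in $\varepsilon$, and the absorption fails precisely in the regime (small $\varepsilon$) where the lemma has content.

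The two regimes must be swapped, and the cutoff must be tied to the \emph{argument} of $w$ rather than to a fixed index. Fix $\delta>0$ small. Over the large scales $r\in[\varepsilon/\delta,1]$ one has $\varepsilon/r\le\delta$, and more importantly the aggregate weight is small: by the integral test, using that the points $\varepsilon\theta^{-k}$ are $\theta^{-1}$-geometrically spaced and $w$ is increasing, $\sum_{k:\,\theta^k\ge\varepsilon/\delta}w_k \le \frac{1}{\log(1/\theta)}\int_0^{\delta/\theta}\frac{w(t)}{t}\,dt$, and the Dini condition \eqref{pri:4.5} lets you make this as small as you like by choosing $\delta$ depending only on $C_0,\theta,w$ -- uniformly in $\varepsilon$. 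This is the regime where the iteration is run and absorbed. The remaining small scales $r\in[\varepsilon,\varepsilon/\delta]$ span a \emph{fixed} multiplicative gap $1/\delta$, so at most $\lceil\log_2(1/\delta)\rceil$ applications of the doubling hypothesis \eqref{pri:4.2} transfer the bound from scale $\varepsilon/\delta$ down to scale $\varepsilon$, at the cost of a factor $C_0^{\lceil\log_2(1/\delta)\rceil}$ depending only on $C_0,\theta,w$. In short: the crude, finitely-many-steps argument belongs to the \emph{small} scales near $\varepsilon$, and the Dini absorption belongs to the \emph{large} scales; your proposal has these interchanged, and as a consequence its key smallness claim is false.
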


\begin{proof}
Here we refer the reader to \cite[Lemma 8.5]{S5}. Although
we make a few modification on it, the proof is almost the same thing.
\end{proof}

\noindent\textbf{Proof of Theorem $\ref{thm:4.1}$}.
It is fine to assume $0<\varepsilon<1/4$, otherwise it follows from the classical theory.
In view of Lemma $\ref{lemma:4.2}$,
we set $\Psi(r) = G(r,u_\varepsilon)$, $w(t) =t^\lambda$,
where $\lambda>0$ is given in Lemma $\ref{lemma:4.1}$. In order to prove the desired estimate
$\eqref{pri:4.4}$, it is sufficient to verify $\eqref{pri:4.2}$ and $\eqref{pri:4.3}$.
Let $\psi(r) = |M_r|$, where $M_r$ is the matrix associated with $\Psi(r)$, respectively.
\begin{equation*}
\begin{aligned}
\Psi(r) &= \frac{1}{r}
\Bigg\{\Big(\dashint_{D_r}|u_\varepsilon-M_r x - \tilde{c}|^2\Big)^{\frac{1}{2}}
+ r^2\Big(\dashint_{D_r}|F|^p\Big)^{\frac{1}{p}}+r^2|M_r|
+ r^2\Big(\dashint_{D_r}|M_rx+\tilde{c}|^p\Big)^{\frac{1}{p}}\\
&\qquad\quad + r\Big\|g-\frac{\partial}{\partial\nu_0}\big(M_rx+\tilde{c}\big)\Big\|_{L^\infty(\Delta_{r})}
+r^{1+\sigma}\Big[g-\frac{\partial}{\partial\nu_0}\big(M_r x + \tilde{c}\big)\Big]_{C^{0,\sigma}(\Delta_r)}\Bigg\},
\end{aligned}
\end{equation*}
Then we have
\begin{equation*}
 \Phi(r) \leq C\Big\{\Psi(2r) + \psi(2r) + c(2r)\Big\},
\end{equation*}
where $c(2r) = \|g\|_{L^\infty(\Delta_{2r})} + |\tilde{c}|$.
This together with Lemma $\ref{lemma:5.6}$ gives
\begin{equation*}
\Psi(\theta r)\leq \frac{1}{2}\Psi(r) + C_0 w(\varepsilon/r)\Big\{\Psi(2r)+\psi(2r)+c(2r)\Big\},
\end{equation*}
which satisfies the condition $\eqref{pri:4.3}$ in Lemma $\ref{lemma:4.2}$.
Let $t,s\in [r,2r]$, and $v(x)=(M_t-M_s)x$. It is clear to see $v$ is harmonic in $\mathbb{R}^d$.
Since $D_r$ satisfies the interior ball condition, we arrive at
\begin{equation}\label{f:5.15}
\begin{aligned}
|M_t-M_s|&\leq \frac{C}{r}\Big(\dashint_{D_r}|(M_t-M_s)x-\tilde{c}|^2\Big)^{\frac{1}{2}}\\
&\leq \frac{C}{t}
\Big(\dashint_{D_t}|u_\varepsilon - M_tx-\tilde{c}|^2\Big)^{\frac{1}{2}}
+ \frac{C}{s}\Big(\dashint_{D_s}|u_\varepsilon - M_sx-\tilde{c}|^2\Big)^{\frac{1}{2}}\\
&\leq C\Big\{\Psi(t)+\Psi(s)\Big\}\leq C\Psi(2r),
\end{aligned}
\end{equation}
where the second and the last steps are based on the fact that $s,t\in[r,2r]$. Due to the same reason, it
is easy to obtain $\Psi(r)\leq C\Psi(2r)$, where we use the assumption $p>d$.
The estimate $\eqref{f:5.15}$ satisfies the condition
$\eqref{pri:4.2}$. Besides, $w$ here obviously satisfies the condition $\eqref{pri:4.5}$.
Hence, according to Lemma $\ref{lemma:4.2}$, for any $r\in[\varepsilon,1/4]$,
we have the following estimate
\begin{equation}\label{f:4.6}
\frac{1}{r}\Big(\dashint_{D_{2r}}|u_\varepsilon - \tilde{c}|^2 \Big)^{\frac{1}{2}}
\leq C\Big\{\Psi(2r) + \psi(2r)\Big\}
\leq C\Big\{\Psi(1) + \psi(1) + c(1)\Big\}.
\end{equation}
Hence, for $\varepsilon\leq r<(1/4)$, the desired estimate $\eqref{pri:4.6}$ consequently follows from
$\eqref{f:4.6}$ and Caccioppoli's inequality $\eqref{pri:4.7}$,
\begin{equation*}
\begin{aligned}
\Big(\dashint_{D_r}|\nabla u_\varepsilon|^2\Big)^{1/2}
&\leq \Big(\dashint_{D_r}|\nabla v_\varepsilon|^2\Big)^{1/2} + C|\tilde{c}| \\
&\leq C\bigg\{\frac{1}{r}\Big(\dashint_{D_{2r}}|u_\varepsilon - \tilde{c}|^2\Big)^{1/2}
+ r\Big(\dashint_{D_{2r}}|F|^p\Big)^{1/p}
+ \|g\|_{L^\infty(\Delta_{2r})} + |\tilde{c}|\bigg\} \\
&\leq C\bigg\{\Big(\dashint_{D_{1}}|u_\varepsilon|^2\Big)^{1/2}
+ \Big(\dashint_{D_{2r}}|u_\varepsilon|^2\Big)^{1/2}
+ \Big(\dashint_{D_{1}}|F|^p\Big)^{1/p}
+ \|g\|_{C^{0,\sigma}(\Delta_{1})}\bigg\},
\end{aligned}
\end{equation*}
where $v_\varepsilon = u_\varepsilon - \tilde{c} - \varepsilon\chi_0(x/\varepsilon)\tilde{c}$,
and we also use the following estimate
\begin{equation}
\begin{aligned}
|\tilde{c}| = |u_0(0)| \leq C\Big(\dashint_{D_r}|u_0|^2\Big)^{1/2}
&\leq C\Big(\dashint_{D_r}|u_\varepsilon|^2\Big)^{1/2}
+ C\Big(\dashint_{D_r}|u_\varepsilon - u_0|^2\Big)^{1/2} \\
&\leq C\bigg\{\Big(\dashint_{D_{2r}}|u_\varepsilon|^2\Big)^{1/2}
+ \Big(\dashint_{D_{1}}|F|^p\Big)^{1/p}
+ \|g\|_{L^{\infty}(\Delta_{1})}\bigg\}
\end{aligned}
\end{equation}
in the last step, which is due to the estimate $\eqref{pri:4.1}$ and the fact $r\geq\varepsilon$.
We have completed the proof.
\qed

\noindent\textbf{Proof of Theorem $\ref{thm:1.0}$}.
By a rescaling argument we may prove $\eqref{pri:1.0}$ for $r=1$.
Let $u_\varepsilon = v_\varepsilon + w_\varepsilon$, where $v_\varepsilon,w_\varepsilon$ satisfy
\begin{equation*}
(1)\left\{\begin{aligned}
\mathcal{L}_\varepsilon(v_\varepsilon) &= F &\text{in}&~D_1,\\
\frac{\partial v_\varepsilon}{\partial\nu_\varepsilon} &= g
&\text{on}&~\Delta_1,
\end{aligned}\right.
\qquad
(2)\left\{\begin{aligned}
\mathcal{L}_\varepsilon(w_\varepsilon) &= \text{div}(f) &\text{in}&~D_1,\\
\frac{\partial w_\varepsilon}{\partial\nu_\varepsilon} &= -n\cdot f
&\text{on}&~\partial D_1,
\end{aligned}\right.
\end{equation*}
respectively. For (1), we claim that we can prove
\begin{equation}\label{f:4.30}
\|\nabla v_\varepsilon\|_{L^\infty(D_{1/2})}
\leq \bigg\{\|v_\varepsilon\|_{L^2(D_{1})}+ \|F\|_{L^p(D_1)} + \|g\|_{C^{0,\sigma}(\Delta_1)}\bigg\},
\end{equation}
where $C$ depends on $\mu,\tau,\kappa,\lambda,p,\sigma$ and the character of $\Omega$.
In terms of $(2)$, it follows the global Lipschitz estimate \cite[Theorem 1.2]{X2} that
\begin{equation}\label{f:4.31}
\|\nabla w_\varepsilon\|_{L^\infty(D_{1})}
\leq C\|f\|_{C^{0,\sigma}(D_1)}.
\end{equation}
Combining the estimates $\eqref{f:4.30}$ and $\eqref{f:4.31}$ lead to
the stated estimate $\eqref{pri:1.0}$,
in which we also need $H^1$ estimate for $w_\varepsilon$ (see for example \cite[Lemma 3.1]{X2}).

We now turn to prove the estimate $\eqref{f:4.30}$.
Let $\Delta_{1/2}\subset\cup_{i=1}^{N_0} B(Q_i,r)\subset \Delta_{2/3}$ for $Q_i\in\partial\Omega$ and
some $0<r<1$.
Let $\tilde{v}_\varepsilon = v_\varepsilon - \xi -\varepsilon\chi_0(x/\varepsilon)\xi$,
and $\tilde{v}_\varepsilon$ satisfies the equation $\eqref{eq:3.1}$ in $D(Q_i,r)$.
By translation we may assume $Q_i = 0$. Then it follows
classical boundary Lipschitz estimate (see \cite[Lemma 2.19]{X1}) that
\begin{equation}
\begin{aligned}
\|\nabla \tilde{v}_\varepsilon\|_{L^\infty(D_{\varepsilon/2})}
&\leq C\bigg\{\frac{1}{\varepsilon}\Big(\dashint_{D_\varepsilon}|v_\varepsilon
- \xi|^2\Big)^{1/2}
+ |\xi| + \mathcal{R}(F,0,g;\varepsilon)\bigg\} \\
&\leq C\bigg\{\Big(\dashint_{D_\varepsilon}|\nabla v_\varepsilon|^2\Big)^{1/2}
+ \Big(\dashint_{D_\varepsilon}|v_\varepsilon|^2\Big)^{1/2} + \mathcal{R}(F,0,g;1)\bigg\} \\
&\leq C\bigg\{\Big(\dashint_{D_{1}}|v_\varepsilon|^2\Big)^{1/2}
 + \mathcal{R}(F,0,g;1)\bigg\},
\end{aligned}
\end{equation}
where we choose $\xi = \dashint_{D_\varepsilon} v_\varepsilon$ in the first line, and the
second step follows from Poincar\'e's inequality and the fact $p>d$. In the last one,
the estimate $\eqref{pri:4.6}$ and the uniform H\"older estimate $\eqref{pri:2.3}$ have been employed,
and by a simple covering argument we have proved the stated estimate $\eqref{f:4.30}$,
and completed the whole proof.
\qed

\section{Neumann functions}

Let $\mathbf{\Gamma}_\varepsilon(x,y)$ denote the matrix of fundamental solutions of
$\mathcal{L}_\varepsilon$ in $\mathbb{R}^d$, with pole at $y$.
Suppose that the coefficients of $\mathcal{L}_\varepsilon$ satisfy
$\eqref{a:1}$, $\eqref{a:2}$, $\eqref{a:3}$ and $\eqref{a:4}$ with $\mu\geq\max\{\mu,\lambda_0\}$,
one may use \cite[Theorem 1.1]{X2} to show that for $d\geq 3$,
\begin{equation}\label{pri:5.2}
\begin{aligned}
\big|\mathbf{\Gamma}_\varepsilon(x,y)\big|&\leq C|x-y|^{2-d} \\
\big|\nabla_x\mathbf{\Gamma}_\varepsilon(x,y)\big|
+ \big|\nabla_y\mathbf{\Gamma}_\varepsilon(x,y)\big| &\leq C|x-y|^{1-d},
\end{aligned}
\end{equation}
where $C$ depends only on $\mu,\kappa,\lambda,\tau,m,d$. Let $U_\varepsilon(x,y)$ be the solution of
\begin{equation}\label{pde:5.1}
\left\{\begin{aligned}
\mathcal{L}_\varepsilon(U_\varepsilon^\beta(\cdot,y)) &= 0 &\quad&\text{in}~~\Omega,\\
\frac{\partial}{\partial\nu_\varepsilon}\big(U_\varepsilon^\beta(\cdot,y)\big)
& = \frac{\partial}{\partial\nu_\varepsilon}\big(\mathbf{\Gamma}_\varepsilon^\beta(\cdot,y)\big)
&\quad&\text{on}~\partial\Omega,
\end{aligned}
\right.
\end{equation}
where $\mathbf{\Gamma}_\varepsilon^\beta(x,y)
= \big(\mathbf{\Gamma}_\varepsilon^{1\beta}(x,y),\cdots,\mathbf{\Gamma}_\varepsilon^{m\beta}(x,y)\big)$.
We now define
\begin{equation}
\mathbf{N}_\varepsilon(x,y) = \mathbf{\Gamma}_\varepsilon(x,y) - U_\varepsilon(x,y)
\end{equation}
for $x,y\in\Omega$. Note that, if
$\mathbf{N}_\varepsilon^\beta(x,y) = \mathbf{\Gamma}_\varepsilon^\beta(x,y) - U_\varepsilon^\beta(x,y)$,
\begin{equation}
\left\{\begin{aligned}
\mathcal{L}_\varepsilon\big(\mathbf{N}_\varepsilon^\beta(\cdot,y)\big) &= e^\beta\delta_y
&\quad&\text{in}~~\Omega,\\
\frac{\partial}{\partial\nu_\varepsilon}\big(\mathbf{N}_\varepsilon^\beta(\cdot,y)\big)
& = 0 &\quad&\text{on}~\partial\Omega,
\end{aligned}\right.
\end{equation}
where $\delta_y$ denotes the Dirac delta function with pole at $y$.
We will call $\mathbf{N}_\varepsilon(x,y)$ the matrix of Neumann functions for
$\mathcal{L}_\varepsilon$ in $\Omega$.

\begin{lemma}
Suppose that the coefficients of $\mathcal{L}_\varepsilon$ satisfy $\eqref{a:1}$,
$\eqref{a:2}$, $\eqref{a:3}$ and $\eqref{a:4}$ with $\lambda\geq\max\{\mu,\lambda_0\}$.
Let $U_\varepsilon(x,y)$ be defined by $\eqref{pri:5.1}$. Then there holds
\begin{equation}\label{pri:5.5}
|U_\varepsilon(x,y)|\leq C\big[\delta(x)\big]^{\frac{2-d}{2}}\big[\delta(y)\big]^{\frac{2-d}{2}}
\end{equation}
for any $x,y\in\Omega$, where $\delta(x)= \emph{dist}(x,\partial\Omega)$ and $C$ depends on
$\mu,d,m$ and $\Omega$.
\end{lemma}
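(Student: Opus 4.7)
The plan is to factor the target bound $[\delta(x)\delta(y)]^{(2-d)/2}$ as the product of an $L^p$ bound on $U_\varepsilon(\cdot,y)$ over $\Omega$ contributing the $\delta(y)$-factor and an interior $L^\infty$-to-$L^p$ trade-off contributing the $\delta(x)$-factor, with $p = 2d/(d-2)$ chosen as the critical Sobolev exponent. The crucial input is an $H^{-1/2}(\partial\Omega)$ estimate of order $[\delta(y)]^{(2-d)/2}$ on the Neumann data $g := \partial_{\nu_\varepsilon}\mathbf{\Gamma}_\varepsilon^\beta(\cdot,y)$.

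First I would establish the duality bound $\|g\|_{H^{-1/2}(\partial\Omega)} \leq C[\delta(y)]^{(2-d)/2}$. For an arbitrary $\phi \in H^{1/2}(\partial\Omega)$ with $\|\phi\|_{H^{1/2}} \leq 1$, extend $\phi$ harmonically to $\tilde\phi \in H^1(\Omega)$ and multiply by a cutoff $\eta \in C_0^\infty(\mathbb{R}^d)$ with $\eta \equiv 0$ on $B(y,\delta(y)/4)$, $\eta \equiv 1$ outside $B(y,\delta(y)/2)$, and $|\nabla\eta| \leq C/\delta(y)$. Because $B(y,\delta(y)/2) \subset \Omega$, the product $\tilde\phi\eta$ has boundary trace $\phi$ and vanishes identically in a neighborhood of $y$, so Green's identity applied to $\mathbf{\Gamma}_\varepsilon^\beta(\cdot,y)$ against $\tilde\phi\eta$ gives
\[
\int_{\partial\Omega} g\cdot\phi\,dS = \int_\Omega a\bigl(\mathbf{\Gamma}_\varepsilon^\beta(\cdot,y),\,\tilde\phi\eta\bigr)\,dx,
\]
with no contribution from the $\delta$-singularity of $\mathcal{L}_\varepsilon\mathbf{\Gamma}_\varepsilon$ at $y$. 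The pointwise estimates \eqref{pri:5.2} integrated in polar coordinates around $y$ give $\|\mathbf{\Gamma}_\varepsilon(\cdot,y)\|_{H^1(\Omega\setminus B(y,\delta(y)/4))} \leq C[\delta(y)]^{(2-d)/2}$ for all $d \geq 3$ (the gradient term dominates), while the dangerous factor $\tilde\phi\nabla\eta$ in $\nabla(\tilde\phi\eta)$ is handled via H\"older combined with the Sobolev embedding $H^1(\Omega)\hookrightarrow L^{2d/(d-2)}(\Omega)$: this gives $\|\tilde\phi\|_{L^2(B(y,\delta(y)))}^2 \leq C[\delta(y)]^{2}\|\tilde\phi\|_{H^1(\Omega)}^2$, and the $[\delta(y)]^2$ exactly cancels the $[\delta(y)]^{-2}$ produced by $|\nabla\eta|^2$, yielding $\|\tilde\phi\eta\|_{H^1(\Omega)} \leq C\|\phi\|_{H^{1/2}(\partial\Omega)}$.

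Next I would deduce an $L^{2d/(d-2)}$ bound on $U_\varepsilon^\beta(\cdot,y)$ itself. Coercivity of the bilinear form $a$ on $H^1(\Omega)$ (ensured by $\lambda \geq \lambda_0$), combined with the trace theorem applied to the weak formulation of \eqref{pde:5.1} tested against $U_\varepsilon^\beta(\cdot,y)$, produces
\[
\|U_\varepsilon^\beta(\cdot,y)\|_{H^1(\Omega)} \leq C\|g\|_{H^{-1/2}(\partial\Omega)} \leq C[\delta(y)]^{(2-d)/2},
\]
and Sobolev embedding then gives $\|U_\varepsilon^\beta(\cdot,y)\|_{L^{2d/(d-2)}(\Omega)} \leq C[\delta(y)]^{(2-d)/2}$. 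Since $\mathcal{L}_\varepsilon U_\varepsilon^\beta(\cdot,y) = 0$ in $\Omega$, applying the interior $L^\infty$-$L^p$ estimate (a standard consequence of \cite[Corollary 3.5]{X0} upgraded from $L^2$ to $L^p$ by Moser-type iteration) on the ball $B(x,\delta(x)/2) \subset \Omega$ with $p = 2d/(d-2)$ yields
\[
|U_\varepsilon(x,y)| \leq C[\delta(x)]^{-d/p}\|U_\varepsilon(\cdot,y)\|_{L^p(B(x,\delta(x)/2))},
\]
and the identity $d/p = (d-2)/2$ combined with the previous step gives the asserted bound.

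The main technical obstacle is the cutoff argument in the first step: the factor $[\delta(y)]^2$ gained from H\"older combined with the Sobolev embedding must exactly cancel the $[\delta(y)]^{-2}$ contributed by $|\nabla\eta|^2$, and this cancellation forces the choice of the critical exponent $p = 2d/(d-2)$ that also governs the interior $L^\infty$-$L^p$ step, so the resulting bound $[\delta(x)\delta(y)]^{(2-d)/2}$ is the natural ceiling reachable by this variational method.
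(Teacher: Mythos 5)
Your proof is correct and follows the same high-level architecture as the paper's: bound $\|U_\varepsilon(\cdot,y)\|_{H^1(\Omega)}$ by $C[\delta(y)]^{(2-d)/2}$ via the Neumann data $g$, then obtain the pointwise bound from the interior $L^\infty$-from-$L^{2^*}$ estimate on $B(x,\delta(x))$, with the factor $[\delta(x)]^{-d/2^*}=[\delta(x)]^{(2-d)/2}$ coming from the averaged integral. The difference lies in how the $H^{-1/2}(\partial\Omega)$ norm of $g$ is controlled. The paper does not use a cutoff or Green's identity: it simply estimates $\|g\|_{L^p(\partial\Omega)}$ directly from the pointwise decay $|\nabla_x\mathbf{\Gamma}_\varepsilon(x,y)|\leq C|x-y|^{1-d}$, integrating in polar coordinates on the boundary with the critical exponent $p=2(d-1)/d$, and then invokes the embedding $L^p(\partial\Omega)\hookrightarrow H^{-1/2}(\partial\Omega)$ (which is sharp exactly at $p=2(d-1)/d$ on the $(d-1)$-dimensional boundary). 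Your duality route via Green's identity with a cutoff $\eta$ vanishing on $B(y,\delta(y)/4)$ is a valid alternative; its advantage is that it avoids the boundary Sobolev embedding and the boundary surface-measure integration, at the cost of the cutoff bookkeeping and the estimate $\|\mathbf{\Gamma}_\varepsilon(\cdot,y)\|_{H^1(\Omega\setminus B(y,\delta(y)/4))}\leq C[\delta(y)]^{(2-d)/2}$, which you correctly identify as dominated by the gradient term after polar integration. In short, yours is a genuine alternative to the paper's step \eqref{f:5.1}--\eqref{f:5.2}, trading a boundary $L^p$ estimate for an interior duality/cutoff argument, with the remaining steps identical. (Incidentally, the displayed exponents $\frac{2-d}{d}$ in the paper's final chain of inequalities are typos for $\frac{2-d}{2}$, as your computation confirms.)
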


\begin{proof}
Fix $y\in\Omega$, and let $w_\varepsilon(x) = U_\varepsilon(x,y)$. In view of $\eqref{pde:5.1}$ we have
\begin{equation}\label{f:5.1}
\|w_\varepsilon\|_{H^1(\Omega)}
\leq C\big\|\frac{\partial w_\varepsilon}{\partial\nu_\varepsilon}\big\|_{H^{-\frac{1}{2}}(\partial\Omega)}
\leq C\big\|\frac{\partial w_\varepsilon}{\partial\nu_\varepsilon}\big\|_{L^{p}(\partial\Omega)},
\end{equation}
where $p= \frac{2(d-1)}{d}$. On account of the estimates $\eqref{pri:5.2}$,
\begin{equation}\label{f:5.2}
\begin{aligned}
\big\|\frac{\partial w_\varepsilon}{\partial\nu_\varepsilon}\big\|_{L^{p}(\partial\Omega)}
&\leq C\Big\{\int_{\partial\Omega}\frac{dS(x)}{|x-y|^{p(d-1)}}\Big\}^{1/p} \\
&\leq C\bigg\{\int_{\delta(y)}^{\infty}s^{p(1-d)+d-2}ds
+ \int_{B(y,2\delta(y))\cap\partial\Omega}\frac{dS(y)}{|x-y|^{p(d-1)}}\bigg\}^{1/p}
\leq C\big[\delta(y)\big]^{\frac{2-d}{2}}.
\end{aligned}
\end{equation}

Also, it follows from interior estimate \cite[Corollary 3.5]{X0} that
\begin{equation*}
\begin{aligned}
|w_\varepsilon(x)|&\leq C\Big(\dashint_{B(x,\delta(x))}|w_\varepsilon|^{2^*}\Big)^{1/2^*}
\leq C\big[\delta(x)\big]^{\frac{2-d}{d}}\big\|w_\varepsilon\big\|_{H^1(\Omega)}
\leq C\big[\delta(x)\big]^{\frac{2-d}{d}}\big[\delta(y)\big]^{\frac{2-d}{d}},
\end{aligned}
\end{equation*}
where we use the estimates $\eqref{f:5.1}$ and $\eqref{f:5.2}$ in the last inequality,
and this ends the proof.
\end{proof}

\begin{thm}
Let $\Omega\subset\mathbb{R}^d$ be a bounded $C^{1,\tau}$ domain.
Suppose that the coefficients of $\mathcal{L}_\varepsilon$ satisfy $\eqref{a:1}$,
$\eqref{a:2}$, $\eqref{a:3}$ and $\eqref{a:4}$ with $\lambda\geq\max\{\mu,\lambda_0\}$. Then
\begin{equation}\label{pri:5.3}
\big|\mathbf{N}_\varepsilon(x,y)\big|\leq C|x-y|^{2-d}
\end{equation}
and for any $\sigma\in(0,1)$,
\begin{equation}\label{pri:5.4}
\begin{aligned}
\big|\mathbf{N}_\varepsilon(x,y)-\mathbf{N}_\varepsilon(z,y)\big|
&\leq C_\sigma\frac{|x-z|^\sigma}{|x-y|^{d-2+\sigma}},\\
\big|\mathbf{N}_\varepsilon(y,x)-\mathbf{N}_\varepsilon(y,z)\big|
&\leq C_\sigma\frac{|x-z|^\sigma}{|x-y|^{d-2+\sigma}},
\end{aligned}
\end{equation}
where $|x-z|<|x-y|/4$.
\end{thm}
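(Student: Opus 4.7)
The plan is to first establish the pointwise size bound $\eqref{pri:5.3}$, then derive the Hölder estimate in the first variable from the boundary Lipschitz estimate of Theorem~\ref{thm:1.0}, and finally obtain the estimate in the second variable by an adjoint correspondence. Fix $y \in \Omega$ and write $w(\cdot) = \mathbf{N}_\varepsilon(\cdot, y)$, so that $\mathcal{L}_\varepsilon w = 0$ in $\Omega\setminus\{y\}$ with $\partial w/\partial\nu_\varepsilon = 0$ on $\partial\Omega$, with a prescribed singularity at $y$.

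For the size estimate $\eqref{pri:5.3}$, fix $x \neq y$ and set $r = |x-y|/4$. Since the ball $B(x,r)$ avoids the pole $y$, $w$ solves a homogeneous Neumann problem on $B(x,r)\cap\Omega$, and the interior or boundary $L^\infty$ bound recorded in the corollary following Lemma~\ref{lemma:2.2} (applied with $F = f = g = 0$) gives
\begin{equation*}
|w(x)| \leq C\Bigl(\dashint_{B(x,r)\cap\Omega}|w|^2\Bigr)^{1/2}.
\end{equation*}
The remaining task is the decay estimate $\|w\|_{L^2(B(x,r)\cap\Omega)}^2 \leq C r^{4-d}$. I would obtain this via a dyadic energy argument around $y$: testing the equation for $w$ against $\phi^2 w$ with a cutoff $\phi$ supported away from $y$, using the ellipticity condition $\eqref{a:1}$ together with $\lambda \geq \max\{\mu,\lambda_0\}$ to absorb the lower-order contributions, produces Caccioppoli inequalities on dyadic annuli centered at $y$. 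Combining with the Sobolev embedding $H^1 \hookrightarrow L^{2d/(d-2)}$ and iterating in Moser fashion yields the stated decay, and hence $|w(x)| \leq C|x-y|^{2-d}$.

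With the size bound in hand, Theorem~\ref{thm:1.0} applied to $w$ on $B(x, r)\cap\Omega$ with $F = f = g = 0$ (or its interior counterpart when $\delta(x)\geq r/4$) gives the Lipschitz bound
\begin{equation*}
\sup_{B(x, r/4)\cap\Omega}|\nabla w| \leq \frac{C}{r}\Bigl(\dashint_{B(x,r)\cap\Omega}|w|^2\Bigr)^{1/2} \leq C r^{1-d}.
\end{equation*}
For any $z$ with $|x-z| < |x-y|/4$, integrating $\nabla w$ along a straight path from $x$ to $z$ inside $B(x, r/4)\cap\Omega$ (using a standard interior-ball modification near the boundary) gives $|w(x) - w(z)| \leq C|x-z|\,|x-y|^{1-d}$, which is strictly stronger than the claimed Hölder estimate for every $\sigma \in (0,1)$.

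For the Hölder estimate in the second variable, I invoke the standard transpose identity $\mathbf{N}_\varepsilon(x,y) = \mathbf{N}_\varepsilon^{*}(y,x)^{T}$, where $\mathbf{N}_\varepsilon^{*}$ is the Neumann function for the adjoint operator $\mathcal{L}_\varepsilon^{*}$; this adjoint obeys the same structural hypotheses $\eqref{a:1}$–$\eqref{a:4}$ (and the same solvability threshold), so the two previous steps applied to $\mathbf{N}_\varepsilon^{*}$ yield the second bound in $\eqref{pri:5.4}$. The main obstacle is the energy decay in Step~1: one must carry out the Moser iteration in the presence of a Neumann boundary condition while keeping the lower-order terms $B\nabla + c$ absorbed via the threshold $\lambda \geq \max\{\mu,\lambda_0\}$, in order to reach the sharp scaling $\|w\|_{L^2(B(x,r)\cap\Omega)}^{2} \lesssim r^{4-d}$. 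Once this is in place, the remaining steps are fairly routine consequences of the regularity results already established in the paper.
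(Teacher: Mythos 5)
Your proposal diverges from the paper at the crucial first step, and the divergence hides a genuine gap. The paper never attempts a direct energy-decay argument for $\mathbf{N}_\varepsilon$. Instead it exploits the decomposition $\mathbf{N}_\varepsilon(x,y) = \mathbf{\Gamma}_\varepsilon(x,y) - U_\varepsilon(x,y)$, where $\mathbf{\Gamma}_\varepsilon$ is the free-space fundamental solution whose decay $|\mathbf{\Gamma}_\varepsilon(x,y)| \leq C|x-y|^{2-d}$ is already available from \cite{X2}, and $U_\varepsilon$ is the harmless corrector defined by $\eqref{pde:5.1}$. The preliminary lemma bounds $U_\varepsilon$ through the conormal data of $\mathbf{\Gamma}_\varepsilon$ on $\partial\Omega$, giving $\eqref{pri:5.5}$, and hence the crude estimate $|\mathbf{N}_\varepsilon(x,y)| \leq C\{|x-y|^{2-d}+[\delta(x)]^{2-d}+[\delta(y)]^{2-d}\}$. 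The $\delta$-singularities are then removed by a clever bootstrap: apply the uniform $L^\infty$ bound $\eqref{pri:2.3}$ with an exponent $s$ so small that $s(d-2)<1$, so that $\dashint_{B(x,r/4)\cap\Omega}[\delta(z)]^{-s(d-2)}\,dz$ is finite with the right scaling, first in $x$ and then in $y$.

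Your plan proposes instead to derive the decay $\|\mathbf{N}_\varepsilon(\cdot,y)\|_{L^2(B(x,r)\cap\Omega)}^2 \leq Cr^{4-d}$ by "dyadic Caccioppoli plus Moser iteration," but this is not an argument, and in fact it is where the whole difficulty lives. Caccioppoli on annuli controls $\|\nabla w\|_{L^2}$ in terms of $\|w\|_{L^2}$ on a slightly larger set; it does not by itself produce decay of $\|w\|_{L^2}$ without an initial decay rate fed into the iteration — and that initial decay rate is precisely what one is trying to prove. For divergence-form systems this cannot be salvaged by De Giorgi–Nash–Moser (which fails for $m>1$), and for the Hölder-coefficient regime the standard route is exactly the fundamental-solution transfer that the paper uses. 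Your proposal also never produces the estimates $\eqref{pri:5.5}$ or $\eqref{f:3.5}$, which are the actual engine of the proof; as written, the decay step is circular. The rest of your plan is fine: the Lipschitz route via Theorem $\ref{thm:1.0}$ yields a bound stronger than the stated Hölder estimate once $\eqref{pri:5.3}$ is in hand (the paper settles for the boundary Hölder estimate $\eqref{pri:2.11}$, which is enough), and the adjoint relation for the second variable is correct — though the paper just repeats the $L^s$-bootstrap in the $y$-variable. So the outline is salvageable, but you need to replace Step 1 with the $\mathbf{\Gamma}_\varepsilon - U_\varepsilon$ decomposition and the small-power $L^s$ bootstrap.
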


\begin{proof}
Due to the boundary H\"older's estimate $\eqref{pri:2.12}$, it suffices to prove
the estimate $\eqref{pri:5.3}$. By the estimate $\eqref{pri:5.5}$,
\begin{equation}\label{f:3.5}
\big|\mathbf{N}_\varepsilon(x,y)\big|\leq C\Big\{|x-y|^{2-d}
+\big[\delta(x)\big]^{2-d}+\big[\delta(y)\big]^{2-d}\Big\}.
\end{equation}
Then let $r=|x-y|$, and it follows from the estimates $\eqref{pri:2.3}$, $\eqref{f:3.5}$ that
\begin{equation*}
\begin{aligned}
\big|\mathbf{N}_\varepsilon(x,y)\big|
&\leq C\bigg\{\dashint_{B(x,r/4)\cap\Omega}|\mathbf{N}_\varepsilon(z,y)|^sdz\bigg\}^{1/s} \\
&\leq C\Big\{|x-y|^{2-d}+[\delta(y)]^{2-d}\Big\},
\end{aligned}
\end{equation*}
where we choose $s>0$ such that $s(d-2)<1$. Using the estimate $\eqref{pri:2.3}$ again, the
above estimate leads to
\begin{equation*}
\big|\mathbf{N}_\varepsilon(x,y)\big|
\leq C\bigg\{\dashint_{B(y,r/4)\cap\Omega}|\mathbf{N}_\varepsilon(x,z)|^sdz\bigg\}^{1/s}
\leq C|x-y|^{2-d},
\end{equation*}
and we have completed the proof.
\end{proof}

\begin{thm}
Let $\Omega\subset\mathbb{R}^d$ be a bounded $C^{1,\tau}$ domain. Suppose that
the coefficients of $\mathcal{L}_\varepsilon$ satisfy $\eqref{a:1}$, $\eqref{a:2}$,
$\eqref{a:3}$ and $\eqref{a:4}$ with $\lambda\geq\max\{\mu,\lambda_0\}$.
Let $x_0,y_0,z_0\in\Omega$ be such that $|x_0-z_0|<|x_0-y_0|/4$. Then for any $\sigma\in(0,1)$,
\begin{equation}\label{pri:5.1}
\bigg(\dashint_{B(y_0,\rho/4)\cap\Omega}
\big|\nabla_y\big\{\mathbf{N}_\varepsilon(x_0,y)-\mathbf{N}_\varepsilon(z_0,y)\big\}\big|^2dy\bigg)^{1/2}
\leq C\rho^{1-d}\Big(\frac{|x_0-z_0|}{\rho}\Big)^{\sigma},
\end{equation}
where $\rho = |x_0-y_0|$ and $C$ depends only on $\mu,\kappa,\lambda,\tau,m,d,\sigma$
and the character of $\Omega$.
\end{thm}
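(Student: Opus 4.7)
The plan is to reduce this $L^2$ estimate for $\nabla_y$ of the Neumann-function difference to the pointwise Hölder-type estimate already established in \eqref{pri:5.4}, via a Caccioppoli inequality on the adjoint equation. Set
\[
w(y) := \mathbf{N}_\varepsilon(x_0,y) - \mathbf{N}_\varepsilon(z_0,y).
\]
Since $\mathbf{N}_\varepsilon(x,\cdot)$ is the Neumann function for the adjoint operator $\mathcal{L}_\varepsilon^*$ with pole at $x$, the function $w$ is a weak solution of $\mathcal{L}_\varepsilon^* w = 0$ on $\Omega\setminus\{x_0,z_0\}$ and satisfies $\partial w/\partial\nu_\varepsilon^* = 0$ on $\partial\Omega$. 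The adjoint $\mathcal{L}_\varepsilon^*$ inherits $\eqref{a:1}$--$\eqref{a:4}$ from $\mathcal{L}_\varepsilon$, so all previous local estimates apply equally well to $w$ in the $y$-variable.

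The geometry is key. For $y\in B(y_0,\rho/2)$ we have $|x_0-y|\geq \rho/2$, and since $|y_0-z_0|\geq \rho-|x_0-z_0|\geq 3\rho/4$, also $|z_0-y|\geq \rho/4$. Thus $B(y_0,\rho/2)\cap\Omega$ avoids both poles. First I would apply the Caccioppoli inequality (the interior version if $B(y_0,\rho/2)\subset\Omega$, or the boundary version \eqref{pri:2.12} for $\mathcal{L}_\varepsilon^*$ if $B(y_0,\rho/2)$ meets $\partial\Omega$, using that $w$ has vanishing conormal data there) to obtain
\[
\Big(\dashint_{B(y_0,\rho/4)\cap\Omega}|\nabla_y w|^2\Big)^{1/2}
\leq \frac{C}{\rho}\Big(\dashint_{B(y_0,\rho/2)\cap\Omega}|w|^2\Big)^{1/2}.
\]

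It remains to bound the right-hand side, and I would split into two cases according to the size of $|x_0-z_0|$ relative to $\rho$. In the principal case $|x_0-z_0|<\rho/8$, one has $|x_0-z_0|<|x_0-y|/4$ for every $y\in B(y_0,\rho/2)$, so the pointwise Hölder estimate \eqref{pri:5.4} applies and yields
\[
|w(y)|\leq C_\sigma\frac{|x_0-z_0|^\sigma}{|x_0-y|^{d-2+\sigma}}\leq C\,\rho^{2-d}\Big(\frac{|x_0-z_0|}{\rho}\Big)^{\sigma},
\]
which combined with the Caccioppoli bound above gives the desired $C\rho^{1-d}(|x_0-z_0|/\rho)^\sigma$. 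In the remaining range $\rho/8\leq |x_0-z_0|<\rho/4$, the factor $(|x_0-z_0|/\rho)^\sigma$ is bounded below, so it suffices to estimate each summand of $w$ separately: applying Caccioppoli to $\mathbf{N}_\varepsilon(x_0,\cdot)$ (respectively $\mathbf{N}_\varepsilon(z_0,\cdot)$) together with the pointwise bound \eqref{pri:5.3} produces
\[
\Big(\dashint_{B(y_0,\rho/4)\cap\Omega}|\nabla_y \mathbf{N}_\varepsilon(x_0,y)|^2\Big)^{1/2}\leq \frac{C}{\rho}\cdot C\rho^{2-d}=C\rho^{1-d},
\]
and analogously for $z_0$; the triangle inequality then delivers \eqref{pri:5.1} in this case.

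The only mildly delicate point is verifying that the boundary Caccioppoli inequality \eqref{pri:2.12} is available for $\mathcal{L}_\varepsilon^*$ on the curved piece $B(y_0,\rho/2)\cap\Omega$ with vanishing conormal data on $B(y_0,\rho/2)\cap\partial\Omega$. Since $\mathcal{L}_\varepsilon^*$ satisfies the same structural hypotheses as $\mathcal{L}_\varepsilon$, the proof of Lemma~\ref{lemma:4.3} transfers verbatim (after a translation placing $y_0$ at the origin and a harmless rescaling of the ball's radius to $1$), so no new work is required. Thus the main obstacle is really bookkeeping rather than substance: one must keep careful track of which ball plays the role of the source set and which plays the role of the target in the Caccioppoli step, and one must treat the borderline range $\rho/8\leq|x_0-z_0|<\rho/4$ by a direct argument rather than by \eqref{pri:5.4}.
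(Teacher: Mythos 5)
Your proof is correct, and it takes a genuinely different first step than the paper's. Both proofs end the same way: apply the Caccioppoli inequality to $w(y)=\mathbf{N}_\varepsilon(x_0,y)-\mathbf{N}_\varepsilon(z_0,y)$ (a solution of the adjoint system with vanishing conormal data on $B(y_0,\rho/2)\cap\partial\Omega$) to reduce the $L^2$ gradient bound to an $L^2$ bound on $w$ itself over $B(y_0,\rho/2)\cap\Omega$. Where you diverge is in how you obtain that $L^2$ bound: you integrate the already-proved pointwise H\"older estimate \eqref{pri:5.4} (supplemented by \eqref{pri:5.3} in the borderline range $\rho/8\le|x_0-z_0|<\rho/4$), whereas the paper derives the $L^2$ bound by duality. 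Specifically, the paper forms $u_\varepsilon(x)=\int_\Omega\mathbf{N}_\varepsilon(x,y)f(y)\,dy$ for a test function $f$ supported in $B(y_0,\rho/2)\cap\Omega$, observes that $u_\varepsilon$ solves $\mathcal{L}_\varepsilon u_\varepsilon=0$ near $x_0$ with zero Neumann data, applies the boundary H\"older estimate in the $x$-variable to get $|u_\varepsilon(x_0)-u_\varepsilon(z_0)|\le C(|x_0-z_0|/\rho)^\sigma\rho^{2-d/2}\|f\|_{L^2}$, and then reads off the bound on $(\dashint_{B(y_0,\rho/2)\cap\Omega}|w|^2)^{1/2}$ from the pairing $u_\varepsilon(x_0)-u_\varepsilon(z_0)=\int w\,f$. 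Your route is shorter and more directly exploits the previous theorem; its cost is the case split, which you correctly identify as forced by the restriction $|x_0-z_0|<|x_0-y|/4$ in \eqref{pri:5.4} (the hypothesis $|x_0-z_0|<\rho/4$ only guarantees this for all $y\in B(y_0,\rho/2)$ when $|x_0-z_0|<\rho/8$). The paper's duality argument never invokes \eqref{pri:5.4} and so bypasses the borderline range altogether. One point you make explicit that the paper leaves implicit is that the boundary Caccioppoli inequality \eqref{pri:2.12} must be applied to the adjoint operator $\mathcal{L}_\varepsilon^*$ in the $y$-variable; since the adjoint coefficients satisfy the same structural hypotheses \eqref{a:1}--\eqref{a:4} with the same $\lambda_0$, this transfer is harmless, but it deserves the remark you give it.
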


\begin{proof}
Let $f\in C_0^\infty(B(y_0,\rho/2)\cap\Omega)$, and
\begin{equation*}
 u_\varepsilon(x) = \int_{\Omega}\mathbf{N}_\varepsilon(x,y)f(y)dy.
\end{equation*}
Then $\mathcal{L}_\varepsilon(u_\varepsilon) = f$ in $\Omega$ and
$\partial u_\varepsilon/\partial\nu_\varepsilon = 0$ in $B(x_0,\rho/2)\cap\partial\Omega$, it
follows from the boundary H\"older estimate and interior estimates
\begin{equation}
\big|u_\varepsilon(x_0)-u_\varepsilon(z_0)\big|
\leq C\Big(\frac{|x_0-z_0|}{\rho}\Big)^{\sigma}
\bigg\{\dashint_{B(x_0,\rho/2)\cap\Omega}|u_\varepsilon|^2\bigg\}^{1/2}.
\end{equation}
On the other hand, for any $x\in B(x_0,\rho/2)\cap\Omega$, we obtain
\begin{equation}
 \big|u_\varepsilon(x)\big|\leq C\rho^2 \bigg\{\dashint_{B(y_0,\rho/2)\cap\Omega}|f|^2 dy\bigg\}^{1/2},
\end{equation}
and this implies
\begin{equation}
\Big\{\dashint_{B(x_0,\rho/2)\cap\Omega}|u_\varepsilon|^2 dx\Big\}^{1/2}
\leq C\rho^{2-d/2}\big\|f\big\|_{L^2(\Omega)}.
\end{equation}

Thus we obtain that
\begin{equation}\label{f:5.3}
\bigg(\dashint_{B(y_0,\rho/2)\cap\Omega}
\big|\mathbf{N}_\varepsilon(x_0,y)-\mathbf{N}_\varepsilon(z_0,y)\big|^2dy\bigg)^{1/2}
\leq C\rho^{2-d}\Big(\frac{|x_0-z_0|}{\rho}\Big)^{\sigma},
\end{equation}
and the stated estimate $\eqref{pri:5.1}$ follows from Caccippoli's inequality $\eqref{pri:2.12}$. We
have completed the proof.
\end{proof}

\section{The proof of Theorem $\ref{thm:1.1}$}

In the case of $p=2$, the estimate $\eqref{pri:1.1}$ has been established
in \cite[Theorem 1.6]{X2}. For $2<p<\infty$, according to Theorem $\ref{thm:2.1}$, it suffices to
establish the following reverse H\"older inequality.

\begin{lemma}\label{lemma:6.1}
Let $\Omega$ be a bounded $C^{1,\eta}$ domain. Suppose the coefficients of $\mathcal{L}_\varepsilon$
satisfy $\eqref{a:1}$, $\eqref{a:2}$, $\eqref{a:3}$ and $\eqref{a:4}$ with $\lambda\geq\lambda_0$ and
$A=A^*$.
For any $Q\in\partial\Omega$ and $0<r<1$,
let $u_\varepsilon\in H^1(B(Q,3r)\cap\partial\Omega;\mathbb{R}^m)$ be the weak solution of
$\mathcal{L}_\varepsilon(u_\varepsilon) = 0$ in $B(Q,3r)\cap\Omega$, and
$\partial u_\varepsilon/\partial\nu_\varepsilon = 0$ on $B(Q,3r)\cap\partial\Omega$.
Then we have
\begin{equation}\label{pri:6.1}
\sup_{B(Q,r)\cap\partial\Omega}|(\nabla u_\varepsilon)^*|
\leq C\bigg\{\Big(\dashint_{B(Q,2r)\cap\partial\Omega}|(\nabla u_\varepsilon)^*|^2\Big)^{1/2}
+\Big(\dashint_{B(Q,2r)\cap\partial\Omega}|(u_\varepsilon)^*|^2\Big)^{1/2}\bigg\},
\end{equation}
where $C$ depends on $\mu,\tau,\kappa,\lambda,m,d$ and the character of $\Omega$.
\end{lemma}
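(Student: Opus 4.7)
The strategy is to reduce \eqref{pri:6.1} to a pointwise Lipschitz bound on $|\nabla u_\varepsilon|$ at the large scale $r$ via Theorem \ref{thm:1.0}, and then upgrade the $|u_\varepsilon|^2$-average appearing on its right-hand side into the $|\nabla u_\varepsilon|^2$-average required for a genuine reverse Hölder inequality. As emphasized in the introduction, this upgrade is the crucial gap: for the homogeneous operator $L_\varepsilon = -\text{div}(A(x/\varepsilon)\nabla)$ it would follow by subtracting a constant and invoking Poincaré, but the lower-order terms of $\mathcal{L}_\varepsilon$ obstruct such a direct subtraction. The Neumann boundary corrector is designed precisely to restore this flexibility, and the symmetry hypothesis $A=A^*$ is what makes it sharp enough.

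Concretely, set $c := \dashint_{B(Q,C'r)\cap\Omega} u_\varepsilon$ and introduce $v_\varepsilon := u_\varepsilon - c - \varepsilon\chi_0(\cdot/\varepsilon)c$, which by the remark following Corollary \ref{cor:3.1} satisfies the inhomogeneous Neumann problem \eqref{eq:3.1} with source terms of size at most $O(|c|)$ in the scale-invariant $\mathcal{R}(F,f,g;r)$-norm. Applying Theorem \ref{thm:1.0} (together with the Caccioppoli-type bound of Remark \ref{re:4.1}) to $v_\varepsilon$ on a neighborhood of $B(Q,r)\cap\partial\Omega$ of depth comparable to $r$, then restoring $u_\varepsilon = v_\varepsilon + c + \varepsilon\chi_0(\cdot/\varepsilon)c$ and applying Poincaré to $u_\varepsilon - c$ (valid because $c$ is the mean of $u_\varepsilon$ over the same region), yields
\begin{equation*}
\sup_{B(Q,Cr)\cap\Omega}|\nabla u_\varepsilon|
\leq C\Big(\dashint_{B(Q,C'r)\cap\Omega}|\nabla u_\varepsilon|^2\Big)^{1/2}
+ C\Big(\dashint_{B(Q,C'r)\cap\Omega}|u_\varepsilon|^2\Big)^{1/2}.
\end{equation*}
The improved Neumann corrector estimates under $A=A^*$ (cf.\ Remark \ref{re:6.1}) are essential here to keep the source terms of $v_\varepsilon$ of order $|c|$ rather than of order $|c|/r$, which would otherwise destroy the estimate at small scales.

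The passage to boundary averages of the nontangential maximal function is then routine. For any $x$ in a truncated cone $\Gamma_{N_0}(Q')\cap B(Q,Cr)$ with $Q'\in B(Q,r)\cap\partial\Omega$, the supremum on the left above bounds $|\nabla u_\varepsilon(x)|$; points $x$ of greater depth are handled by interior Lipschitz estimates. Converting the right-hand side by the standard nontangential transfer
\begin{equation*}
\Big(\dashint_{B(Q,C'r)\cap\Omega}|F|^2\Big)^{1/2}
\leq C\Big(\dashint_{B(Q,2r)\cap\partial\Omega}|F^*|^2\Big)^{1/2}
\end{equation*}
applied with $F=\nabla u_\varepsilon$ and with $F=u_\varepsilon$, taking the supremum over all admissible $x$, and using a covering to absorb constants yields \eqref{pri:6.1}. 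The main obstacle of the whole argument is the middle step: tracking the Neumann corrector carefully enough so that $\mathcal{R}(F,f,g;r)$ reduces to $O(|c|)$ which can be absorbed into the $|u_\varepsilon|^2$-average, a step made possible exactly by the symmetry $A=A^*$.
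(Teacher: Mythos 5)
Your overall strategy matches the paper's: subtract a corrector so that Theorem \ref{thm:1.0} applies to the difference, then recover $u_\varepsilon$ and use Poincar\'e's inequality. However, your choice of corrector is wrong for the dominant scale regime, and this is a genuine gap.

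You set $v_\varepsilon = u_\varepsilon - c - \varepsilon\chi_0(\cdot/\varepsilon)c$ and claim the source terms of $\eqref{eq:3.1}$ are of size $O(|c|)$ in the $\mathcal{R}(F,f,g;r)$-norm. This is false when $r \gg \varepsilon$. Examining $\eqref{eq:3.1}$, the Neumann data of $v_\varepsilon$ contains the term $-n\cdot\big[A_\varepsilon(\nabla\chi_0)_\varepsilon + V_\varepsilon\big]c$, and $\nabla\chi_0(x/\varepsilon)$ is $\varepsilon$-periodic, so its H\"older seminorm is of order $\varepsilon^{-\sigma}$. Consequently the contribution to $\mathcal{R}(F,f,g;r)$ from $r^\sigma[g]_{C^{0,\sigma}(\Delta_r)}$ is of order $(r/\varepsilon)^\sigma|c|$, which is unbounded as $\varepsilon\to 0$ with $r$ fixed. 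You invoke Remark \ref{re:6.1} to argue the source stays of order $|c|$, but that remark concerns $\Psi_{\varepsilon,0}$, and you never introduce $\Psi_{\varepsilon,0}$ into your decomposition; citing its estimates does nothing for a $\chi_0$-based subtraction. You also misdiagnose the danger zone as ``small scales'' when in fact the $\chi_0$-decomposition is safe exactly when $0<r<\varepsilon$ (since then $(r/\varepsilon)^\sigma\leq 1$) and fails for $\varepsilon\leq r<1$.

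The paper resolves this by splitting into two cases. For $\varepsilon\leq r<1$ it subtracts the \emph{Neumann boundary corrector}: $v_\varepsilon = u_\varepsilon - \Psi_{\varepsilon,0}\xi$ with $\xi = \dashint_{D_r}u_\varepsilon$, where $\Psi_{\varepsilon,0}$ solves the auxiliary Neumann problem and satisfies $\|\Psi_{\varepsilon,0}-I\|_{L^\infty}\leq C\varepsilon$ and $\|\Psi_{\varepsilon,0}\|_{W^{1,\infty}}\leq C$ (Remark \ref{re:6.1}, which does require $A=A^*$). With this choice, the Neumann data of $v_\varepsilon$ in $\eqref{pde:6.1}$ becomes $n\cdot V_\varepsilon(I-\Psi_{\varepsilon,0})\xi - n\cdot\widehat{V}\xi$: the $L^\infty$ norm of the first piece is $O(\varepsilon|\xi|)$, and thanks to $\|\Psi_{\varepsilon,0}\|_{W^{1,\infty}}\leq C$ its H\"older seminorm is $O(|\xi|)$ rather than $O(\varepsilon^{-\sigma}|\xi|)$, so $\mathcal{R}$ is genuinely $O(|\xi|)$ uniformly in $r\in[\varepsilon,1)$. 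The $\chi_0$-decomposition you propose is only used in the complementary regime $0<r<\varepsilon$. Without the $\Psi_{\varepsilon,0}$ step, your argument does not close.
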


\begin{proof}
The main idea could be found in \cite[Lemma 9.1]{KFS1}, and we have to impose some
new tricks to derive $\eqref{pri:6.1}$, which additionally involves the so-called Neuamnn correctors defined
in \cite{X1}, i.e.,
\begin{equation*}
-\text{div}\big[A(x/\varepsilon)\nabla \Psi_{\varepsilon,0}\big] = \text{div}(V(x/\varepsilon))
\quad \text{in}~\Omega,
\qquad n\cdot A(x/\varepsilon)\nabla\Psi_{\varepsilon,0} = n\cdot{\widehat{V}-V(x/\varepsilon)}
\quad \text{on}~\partial\Omega.
\end{equation*}

The purpose is to establish the following boundary estimate
\begin{equation}\label{pri:6.2}
 \|\nabla u_\varepsilon\|_{L^\infty(D_{r/2})}
 \leq C\bigg\{\Big(\dashint_{D_r}|\nabla u_\varepsilon|^2\Big)^{1/2}
+\Big(\dashint_{D_r}|u_\varepsilon|^2\Big)^{1/2}\bigg\},
\end{equation}
where $C$ depends on $\mu,\tau,\kappa,\lambda,m,d$ and $\Omega$.

First of all, we consider $\varepsilon\leq r<1$.
Let $v_\varepsilon = u_\varepsilon - \Psi_{\varepsilon,0}\xi$ for some $\xi\in\mathbb{R}^m$, and then we have
\begin{equation}\label{pde:6.1}
\left\{\begin{aligned}
\mathcal{L}_\varepsilon(v_\varepsilon) &= \text{div}\big[V_\varepsilon(\Psi_{\varepsilon,0}-I)\xi\big]
-B_\varepsilon\nabla\Psi_{\varepsilon,0}\xi - (c_\varepsilon+\lambda I)\Psi_{\varepsilon,0}\xi
\quad&\text{in}& ~ B(Q,3r)\cap\Omega,\\
\frac{\partial v_\varepsilon}{\partial\nu_\varepsilon}
& = n\cdot V_\varepsilon(I-\Psi_{\varepsilon,0})\xi - n\cdot\widehat{V}\xi
\quad &\text{on}&~B(Q,3r)\cap\partial\Omega,
\end{aligned}\right.
\end{equation}
where $V_\varepsilon(x) = V(x/\varepsilon)$ and $c_\varepsilon(x)=c(x/\varepsilon)$. Note that
\begin{equation}\label{f:6.1}
\|\Psi_{\varepsilon,0}-I\|_{L^\infty(\Omega)}\leq C\varepsilon,
\qquad \|\Psi_{\varepsilon,0}\|_{W^{1,\infty}(\Omega)}\leq C,
\end{equation}
where $C$ depends on $\mu,\kappa,\tau,m,d$ and $\Omega$. The above results have been proved in
\cite[Theorem 4.2]{X1} and Remark $\ref{re:6.1}$.

Applying the boundary estimate $\eqref{pri:1.0}$ to the equation $\eqref{pde:6.1}$, we have
\begin{equation*}
\begin{aligned}
\|\nabla v_\varepsilon\|_{L^\infty(D_{r/2})}
&\leq C\bigg\{\frac{1}{r}\Big(\dashint_{D_r}|v_\varepsilon|^2\Big)^{1/2}
+|\xi|\bigg\} \\
&\leq C\bigg\{\frac{1}{r}\Big(\dashint_{D_r}|u_\varepsilon-\xi|^2\Big)^{1/2}
+\frac{\varepsilon}{r}|\xi|+|\xi|\bigg\}\\
&\leq C\bigg\{\frac{1}{r}\Big(\dashint_{D_r}|u_\varepsilon-\xi|^2\Big)^{1/2}+|\xi|\bigg\}\\
&\leq C\bigg\{\Big(\dashint_{D_r}|\nabla u_\varepsilon|^2\Big)^{1/2}
+\Big(\dashint_{D_r}|u_\varepsilon|^2\Big)^{1/2}\bigg\}
\end{aligned}
\end{equation*}
where we use the estimate $\eqref{f:6.1}$ in the first and second inequalities, and
the fact $r\geq \varepsilon$ in the third one. In the last step, we choose
$\xi=\dashint_{D_r} u_\varepsilon$ and employ Poincar\'e's inequality.
The above estimate implies $\eqref{pri:6.2}$ for $\varepsilon\leq r<1$.

In the case of $0<r<\varepsilon$,
let $v_\varepsilon = u_\varepsilon - \xi - \varepsilon\chi_0(x/\varepsilon)\xi$
for some $\xi\in\mathbb{R}^m$, and $v_\varepsilon$ satisfies the equation
$\eqref{eq:3.1}$ by setting $F=g=0$. Again, by choosing $\xi=\dashint_{D_r} u_\varepsilon$,
and the estimate $\eqref{pri:1.0}$, one may derive the estimate $\eqref{pri:6.2}$
for $0<r<\varepsilon$.

Recall the definition of nontangential maximal function, and we have
\begin{equation*}
 (\nabla u_\varepsilon)^*(P) = \max\{\mathcal{M}_{r,1}(\nabla u_\varepsilon)(P),
 \mathcal{M}_{r,2}(\nabla u_\varepsilon)(P)\},
\end{equation*}
where
\begin{equation}\label{def:6.1}
\begin{aligned}
\mathcal{M}_{r,1}(\nabla u_\varepsilon)(P)
&=\big\{|\nabla u_\varepsilon|:x\in\Omega, |x-P|\leq c_0r;
~|x-P|\leq N_0\text{dist}(x,\partial\Omega)\big\},\\
\mathcal{M}_{r,2}(\nabla u_\varepsilon)(P)
&=\big\{|\nabla u_\varepsilon|:x\in\Omega, |x-P|> c_0r;
~|x-P|\leq N_0\text{dist}(x,\partial\Omega)\big\}.
\end{aligned}
\end{equation}
Here $c_0>0$ is a small constant.
We first handle the estimate for $\mathcal{M}_{r,1}(\nabla u_\varepsilon)$.
It follows from the estimate $\eqref{pri:6.2}$ that
\begin{equation}\label{f:6.2}
\begin{aligned}
\sup_{B(Q,r)\cap\partial\Omega}\mathcal{M}_{r,1}(\nabla u_\varepsilon)
&\leq \sup_{B(Q,3r/2)\cap\partial\Omega}|\nabla u_\varepsilon| \\
&\leq C\bigg\{\Big(\dashint_{D_{2r}}|\nabla u_\varepsilon|^2\Big)^{1/2}
+\Big(\dashint_{D_{2r}}|u_\varepsilon|^2\Big)^{1/2}\bigg\} \\
&\leq C\bigg\{\Big(\dashint_{\Delta_{2r}}|(\nabla u_\varepsilon)^*|^2\Big)^{1/2}
+\Big(\dashint_{\Delta_{2r}}|(u_\varepsilon)^*|^2\Big)^{1/2}\bigg\}.
\end{aligned}
\end{equation}

Similarly, one may derive the following interior Lipschitz estimate
\begin{equation*}
 |\nabla u_\varepsilon(x)|\leq
 C\bigg\{\Big(\dashint_{B(x,\delta(x)/10)}|\nabla u_\varepsilon|^2\Big)^{1/2}
 +\Big(\dashint_{B(x,\delta(x)/10)}|u_\varepsilon|^2\Big)^{1/2}\bigg\}
\end{equation*}
from \cite[Theorem 4.4]{X0}, and this will give
\begin{equation}\label{f:6.3}
\begin{aligned}
\sup_{B(Q,r)\cap\partial\Omega}\mathcal{M}_{r,2}(\nabla u_\varepsilon)
\leq C\bigg\{\Big(\dashint_{\Delta_{2r}}|(\nabla u_\varepsilon)^*|^2\Big)^{1/2}
+\Big(\dashint_{\Delta_{2r}}|(u_\varepsilon)^*|^2\Big)^{1/2}\bigg\}.
\end{aligned}
\end{equation}
Combining the estimates $\eqref{f:6.2}$ and $\eqref{f:6.3}$ consequently lead to
the stated estimate $\eqref{pri:6.1}$. We have completed the proof.
\end{proof}

\begin{remark}\label{re:6.1}
\emph{Here we plan give a sketch of the proof of the first estimate in $\eqref{f:6.1}$.
Let $H_{\varepsilon,0} = \Psi_{\varepsilon,0} - I -\varepsilon\chi_0(x/\varepsilon)$. Then it satisfies
$L_\varepsilon(H_{\varepsilon,0}) = 0$ in $\Omega$ and
$\partial H_{\varepsilon,0}/\partial n_\varepsilon = \sum_{ij}
\big(n_i\frac{\partial}{\partial x_j}-n_j\frac{\partial}{\partial x_i}\big)g_{ij}$ on $\partial\Omega$
with $\|g_{ij}\|_{L^\infty(\Omega)}\leq C\varepsilon$ (the computation will be found in
\cite[Lemma 4.4]{X1}),
where $\partial/\partial n_\varepsilon = n\cdot A(x/\varepsilon)\nabla$ denotes
the conormal derivative operator associated with $L_\varepsilon$. According to the proof of \cite[Lemma 4.3]{X1}, one may have
\begin{equation*}
|H_{\varepsilon,0}(x) - H_{\varepsilon,0}(y)|\leq C\varepsilon
\end{equation*}
for any $x,y\in\Omega$. Thus, it is not hard to see that
\begin{equation*}
\|H_{\varepsilon,0}(x)\|_{L^\infty(\Omega)}\leq C\varepsilon + C\|H_{\varepsilon,0}\|_{L^2(\Omega)}.
\end{equation*}
Note that
\begin{equation*}
\|H_{\varepsilon,0}\|_{L^2(\Omega)}
\leq C\|(H_{\varepsilon,0})^*\|_{L^2(\partial\Omega)}
\leq C\|H_{\varepsilon,0}\|_{L^2(\partial\Omega)} \leq C\varepsilon,
\end{equation*}
and the last inequality is due to a duality argument (see \cite{KFS2}).
Let $\phi_\varepsilon\in H^1(\Omega;\mathbb{R}^m)$ be a solution of
$L_\varepsilon(\phi_\varepsilon) = 0$ in $\Omega$, and
$\partial\phi_\varepsilon/\partial n_\varepsilon = f$ on $\partial\Omega$ with
$\int_{\partial\Omega}fdS = 0$.
\begin{equation*}
\Big|\int_{\partial\Omega}H_{\varepsilon,0}f dS\Big|
=\Big|\int_{\partial\Omega} g_{ij}\Big(n_i\frac{\partial}{\partial x_j}
-n_j\frac{\partial}{\partial x_i}\Big)\phi_\varepsilon dS\Big|
\leq C\varepsilon\|f\|_{L^2(\partial\Omega)},
\end{equation*}
where we use the Rellich estimate
$\|\nabla_{\text{tan}}\phi_\varepsilon\|_{L^2(\partial\Omega)}\leq C\|f\|_{L^2(\partial\Omega)}$
(see \cite{KS1}). Thus the above estimates imply
\begin{equation*}
  \|\Psi_{\varepsilon,0} - I\|_{L^\infty(\Omega)}
  \leq \|H_{\varepsilon,0}\|_{L^\infty(\Omega)} + C\varepsilon \leq C\varepsilon.
\end{equation*}
We mention that the above estimate additionally relies on the symmetry condition $A=A^*$, which
actually improves the corresponding result in \cite[Theorem 4.2]{X1}.}
\end{remark}

One may study the solutions of the $L^2$ Neumann problem with atomic data
$\partial u_\varepsilon/\partial \nu_\varepsilon = a$ on $\partial\Omega$,
where $\int_{\partial\Omega} a(x) dS = 0$, and $\text{supp}(a)\subset B(Q,r)\cap\partial\Omega$
for some $Q\in\partial\Omega$ and $0<r<r_0$, and $\|a\|_{L^\infty(\partial\Omega)}\leq r^{1-d}$.
In fact, the stated estimate $\eqref{pri:1.1}$ holding for $1<p<2$ follows from the following
result by interpolation.

\begin{thm}\label{thm:6.1}
Let $a$ be an atom on $\Delta_r$ with $0<r<r_0$. Suppose that
$u_\varepsilon$ is a weak solution of $\mathcal{L}_\varepsilon(u_\varepsilon) = 0$ in $\Omega$
with $\partial u_\varepsilon/\partial \nu_\varepsilon = a$ on $\partial\Omega$. Then we have
the following estimate
\begin{equation}\label{pri:5.6}
\int_{\partial\Omega} (\nabla u_\varepsilon)^* dS\leq C,
\end{equation}
where $C$ depends only on $\mu,\kappa,\lambda,d,m$ and $\Omega$.
\end{thm}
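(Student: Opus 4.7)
The plan is to split $\partial\Omega$ into a local part near the support of the atom and a far-field part, and bound each contribution separately, following the template for $H^1_{at}$-Neumann estimates in \cite{KFS1,DK}. Throughout, $a$ is an atom supported in $\Delta(Q,r)$ with $\int a\, dS=0$ and $\|a\|_\infty\leq r^{1-d}$, so $\|a\|_{L^1(\partial\Omega)}\leq C$ and $\|a\|_{L^2(\partial\Omega)}\leq Cr^{(1-d)/2}$.

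For the \emph{local} part, fix a large constant $C_0>1$. The $L^2$ Neumann estimate \cite[Theorem 1.6]{X2} combined with H\"older's inequality gives
\begin{equation*}
\int_{\Delta(Q,C_0 r)} (\nabla u_\varepsilon)^*\, dS
\leq Cr^{(d-1)/2}\|(\nabla u_\varepsilon)^*\|_{L^2(\partial\Omega)}
\leq Cr^{(d-1)/2}\|a\|_{L^2(\partial\Omega)}
\leq C.
\end{equation*}
For the \emph{far-field} part, fix $P\in\partial\Omega$ with $|P-Q|\geq C_0 r$. Using the Neumann function representation $u_\varepsilon(x)=\int_{\Delta(Q,r)}\mathbf{N}_\varepsilon(x,y)a(y)dS(y)$ and the mean-zero property of $a$, subtract the value at $Q$ to rewrite
\begin{equation*}
\nabla_x u_\varepsilon(x)=\int_{\Delta(Q,r)}\bigl[\nabla_x\mathbf{N}_\varepsilon(x,y)-\nabla_x\mathbf{N}_\varepsilon(x,Q)\bigr]\,a(y)\,dS(y).
\end{equation*}
For $x\in\Gamma_{N_0}(P)$ we have $|x-y|\sim |P-Q|$ uniformly in $y\in\Delta(Q,r)$. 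The aim is to show the pointwise H\"older bound
\begin{equation*}
\bigl|\nabla_x\mathbf{N}_\varepsilon(x,y)-\nabla_x\mathbf{N}_\varepsilon(x,Q)\bigr|
\leq C\,\frac{|y-Q|^\sigma}{|x-Q|^{d-1+\sigma}},
\qquad y\in\Delta(Q,r),
\end{equation*}
from which $(\nabla u_\varepsilon)^*(P)\leq Cr^\sigma|P-Q|^{-(d-1+\sigma)}\|a\|_{L^1}\leq Cr^\sigma|P-Q|^{-(d-1+\sigma)}$. Integrating over $\{P:|P-Q|\geq C_0r\}$ gives
\begin{equation*}
\int_{\partial\Omega\setminus\Delta(Q,C_0 r)}(\nabla u_\varepsilon)^*\,dS
\leq Cr^\sigma\int_{|P-Q|\geq C_0 r}\frac{dS(P)}{|P-Q|^{d-1+\sigma}}\leq C,
\end{equation*}
and combining with the local estimate yields $\eqref{pri:5.6}$.

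The main obstacle is the pointwise H\"older bound on $\nabla_x\mathbf{N}_\varepsilon(x,y)$ in the $y$-variable, since Theorem~5.3 as stated is an averaged bound on $\nabla_y$. The route I would take is duality: the Neumann function of $\mathcal{L}_\varepsilon$ is related to that of the adjoint by $\mathbf{N}_\varepsilon(x,y)=[\mathbf{N}_\varepsilon^*(y,x)]^T$, and all the hypotheses of Section~5 are preserved under passage to $\mathcal{L}_\varepsilon^*$. Applying $\eqref{pri:5.1}$ to $\mathbf{N}_\varepsilon^*$ produces an averaged $L^2$ bound on $\nabla_x\mathbf{N}_\varepsilon(x,y)-\nabla_x\mathbf{N}_\varepsilon(x,Q)$ as a function of $x$, and then upgrading it to a pointwise estimate uses the boundary Lipschitz estimate $\eqref{pri:1.0}$ (for the adjoint operator, applied to the function $z\mapsto \mathbf{N}_\varepsilon^*(z,y)-\mathbf{N}_\varepsilon^*(z,Q)$, which solves a homogeneous Neumann problem away from $y$ and $Q$) together with the uniform boundary H\"older estimate $\eqref{pri:2.11}$. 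Once this technical ingredient is in place, the rest of the argument is the standard atomic decomposition computation, and Theorem~$\ref{thm:6.1}$ follows; the desired $L^p$ estimate for $1<p<2$ in Theorem~$\ref{thm:1.1}$ is then obtained by interpolation with the $L^2$ case.
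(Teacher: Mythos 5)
Your proposal is correct in outline, and the local part (near the atom's support) is handled exactly as in the paper. The far-field part, however, takes a genuinely different route. The paper does not attempt a pointwise bound on $(\nabla u_\varepsilon)^*(P)$: instead it splits the nontangential maximal function into $\mathcal{M}_{\rho,1}$ (points of the cone at distance $\lesssim\rho$ from $P$) and $\mathcal{M}_{\rho,2}$ (points farther out). For $\mathcal{M}_{\rho,2}$ it plugs the Neumann representation into an interior Lipschitz estimate, pushes the $L^2$ average through the $y$-integral by Minkowski, and invokes the averaged Neumann-function estimates $\eqref{pri:5.1}$ and $\eqref{f:5.3}$; for $\mathcal{M}_{\rho,1}$ it avoids any pointwise gradient estimate up to the boundary by instead using a local $L^2$ Rellich (nontangential maximal function) estimate integrated over a range of radii, a co-area argument. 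Your route is to prove the single pointwise bound $(\nabla u_\varepsilon)^*(P)\leq C r^\sigma|P-Q|^{-(d-1+\sigma)}$, which requires a pointwise gradient bound on $\mathbf{N}_\varepsilon(x,y)-\mathbf{N}_\varepsilon(x,Q)$ up to the boundary; this in turn requires the boundary Lipschitz estimate $\eqref{pri:1.0}$, an ingredient the paper's version of this argument does not use (though it is available here). One remark that would simplify your plan: you do not actually need the duality detour through $\eqref{pri:5.1}$ for the adjoint. The second line of $\eqref{pri:5.4}$ already gives a \emph{pointwise} H\"older estimate in the second variable, namely $|\mathbf{N}_\varepsilon(x,y)-\mathbf{N}_\varepsilon(x,Q)|\leq C|y-Q|^\sigma|x-Q|^{2-d-\sigma}$; applying $\eqref{pri:1.0}$ to $w(\cdot)=\mathbf{N}_\varepsilon(\cdot,y)-\mathbf{N}_\varepsilon(\cdot,Q)$, which solves $\mathcal{L}_\varepsilon w=0$ with zero Neumann data on a ball of radius $\sim|P-Q|$ away from $\{y,Q\}$, immediately upgrades this to the gradient bound you need. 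Incidentally, you are right to flag that Theorem 5.3 as written has the gradient on the ``wrong'' variable; the paper's own proof of Theorem $\ref{thm:6.1}$ also uses it in the transposed form and implicitly relies on the same adjoint argument, so being explicit there is a good instinct.
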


\begin{proof}
The main ideas of the proof may be found in \cite[pp.932-933]{S1},
as well as in \cite[Lemma 2.7]{DK}. Clearly, the integral in the left-hand side of
$\eqref{pri:5.6}$ may be divided into
\begin{equation*}
\int_{\partial\Omega} (\nabla u_\varepsilon)^* dS
= \bigg\{
\int_{B(Q,Cr)\cap\partial\Omega} + \int_{\partial\Omega\setminus B(Q,Cr)}\bigg\}
(\nabla u_\varepsilon)^*dS,
\end{equation*}
and it follows from $L^2$ estimate (\cite[Theorem 1.6]{X2}) and H\"older's inequality that
\begin{equation}\label{f:5.6}
\int_{B(Q,Cr)\cap\partial\Omega} (\nabla u_\varepsilon)^* dS
\leq Cr^{\frac{d-1}{2}}\big\|(\nabla u_\varepsilon)^*\big\|_{L^2(\partial\Omega)}
\leq Cr^{\frac{d-1}{2}}\|a\|_{L^2(\partial\Omega)} \leq C,
\end{equation}
where we also use the assumption $\|a\|_{L^\infty(\partial\Omega)}\leq r^{1-d}$.

Let $\rho = |P_0-Q|\geq Cr$, and one may show
\begin{equation}\label{f:5.5}
\int_{B(P_0,c\rho)\cap\partial\Omega} (\nabla u_\varepsilon)^* dS
\leq C\Big(\frac{r}{\rho}\Big)^{\sigma}
\end{equation}
for some $\sigma>0$. Since $\int_{\partial\Omega} a(x)dS = 0$, we have the formula
\begin{equation*}
 u_\varepsilon(x) = \int_{B(Q,r)\cap\partial\Omega}
 \Big\{\mathbf{N}_\varepsilon(x,y)-\mathbf{N}_\varepsilon(x,Q)\Big\}a(y)dS(y),
\end{equation*}
and it follows that
\begin{equation*}
|\nabla u_\varepsilon(x)|
\leq C\dashint_{B(Q,r)\cap\partial\Omega}
\big|\nabla_x\big\{\mathbf{N}_\varepsilon(x,y)-\mathbf{N}_\varepsilon(x,Q)\big\}\big|dS(y).
\end{equation*}

Note that for any $z\in\Omega$ such that $c\rho\leq |z-P|<N_0\delta(z)$ for some
$P\in B(P_0,c\rho)\cap\partial\Omega$, and it follows from interior
Lipschitz estimates (which is based upon \cite[Theorem 4.4]{X0} coupled with the techniques in
the proof of Lemma $\ref{lemma:6.1}$), and
$\eqref{pri:5.1}$ and $\eqref{f:5.3}$ that
\begin{equation*}
\begin{aligned}
|\nabla u_\varepsilon(z)|
&\leq C \bigg(\dashint_{B(z,c\delta(z))}|\nabla u_\varepsilon|^2 dx\bigg)^{1/2}
+ C\delta(x) \dashint_{B(z,c\delta(z))} |u_\varepsilon| dx \\
&\leq C\dashint_{B(Q,r)\cap\partial\Omega}
\bigg(\dashint_{B(z,c\delta(z))}
\big|\nabla_x\big\{\mathbf{N}_\varepsilon(x,y)-\mathbf{N}_\varepsilon(x,Q)\big\}\big|^2
dx\bigg)^{1/2}dS(y) \\
& + C\delta(x)
\dashint_{B(Q,r)\cap\partial\Omega}
\bigg(\dashint_{B(z,c\delta(z))}
\big|\mathbf{N}_\varepsilon(x,y)-\mathbf{N}_\varepsilon(x,Q)\big|^2
dx\bigg)^{1/2}dS(y) \\
& \leq C\rho^{1-d}\Big(\frac{r}{\rho}\Big)^{\sigma}
+ C\delta(z)\rho^{2-d}\Big(\frac{r}{\rho}\Big)^{\sigma}
\leq C\rho^{1-d}\Big(\frac{r}{\rho}\Big)^{\sigma},
\end{aligned}
\end{equation*}
where we use
Minkowski's inequality in the second step and the fact that $(c\rho)/N_0<\delta(x)<r_0$ in the last one.
According to the definition of $\mathcal{M}_{\rho,2}$ in $\eqref{def:6.1}$, we have
\begin{equation}\label{f:5.4}
\int_{B(P_0,c\rho)\cap\partial\Omega}\mathcal{M}_{\rho,2}(\nabla u_\varepsilon) dS
\leq C\Big(\frac{r}{\rho}\Big)^{\sigma}.
\end{equation}

For any $\theta\in[1,3/2]$, it is known that
$\mathcal{L}_\varepsilon(u_\varepsilon) = 0$ in $\Omega$ and
$\partial u_\varepsilon/\partial\nu_\varepsilon = 0$ on $B(P_0,\theta c\rho)\cap\partial\Omega$.
In terms of $L^2$ nontangential maximal function estimate \cite[Thereom 1.7]{X2}, we have
\begin{equation*}
\int_{B(P_0,\theta c\rho)\cap\partial\Omega} |\mathcal{M}_{\rho,1}(\nabla u_\varepsilon)|^2 dS
\leq C\int_{\partial D_{\theta c\rho}\setminus\partial\Omega}|\nabla u_\varepsilon|^2 dS.
\end{equation*}
Integrating in $\theta$ on $[1,3/2]$ yields
\begin{equation*}
\int_{B(P_0,c\rho)\cap\partial\Omega}
|\mathcal{M}_{\rho,1}(\nabla u_\varepsilon)|^2 dS
\leq \frac{C}{\rho}
\int_{D_{2c\rho}}|\nabla u_\varepsilon|^2 dx,
\end{equation*}
and
\begin{equation*}
\int_{B(P_0,c\rho)\cap\partial\Omega}
\mathcal{M}_{\rho,1}(\nabla u_\varepsilon) dS
\leq C\rho^{d-1}\bigg(\dashint_{B(P_0,2c\rho)\cap\Omega}|\nabla u_\varepsilon|^2\bigg)^{1/2}
\leq C\Big(\frac{r}{\rho}\Big)^\sigma.
\end{equation*}
This together with $\eqref{f:5.4}$ gives the estimate $\eqref{f:5.5}$.
Consequently, the desired estimate $\eqref{pri:5.6}$ follows from
the estimates $\eqref{pri:5.5}$ and $\eqref{pri:5.6}$ by a covering argument. We are done.
\end{proof}

\noindent\textbf{Proof of Theorem $\ref{thm:1.1}$}.
The desired estimate
for $\|(\nabla u_\varepsilon)^*\|_{L^p(\partial\Omega)}$ in $\eqref{pri:1.1}$
is based upon Theorem $\ref{thm:2.1}$,
Lemma $\ref{lemma:6.1}$ and
Theorem $\ref{thm:6.1}$. In view of Lemmas $\ref{lemma:2.3}$ and $\ref{lemma:2.4}$, one may derive
\begin{equation*}
\|(u_\varepsilon)^*\|_{L^p(\partial\Omega)}
\leq C\|\mathcal{M}(u_\varepsilon)\|_{L^p(\partial\Omega)}
\leq C\|u_\varepsilon\|_{W^{1,p}(\Omega)}
\leq C\|g\|_{L^{p}(\partial\Omega)},
\end{equation*}
where we use $W^{1,p}$ estimate (see \cite[Theorem 1.1]{X1}) in the last step.
The proof is complete.
\qed

\begin{center}
\textbf{Acknowledgements}
\end{center}

The authors thank Prof. Zhongwei Shen for very helpful discussions
regarding this work when he visited Peking University.
The first author also appreciates his constant and illuminating instruction.
The first author was supported by the China Postdoctoral Science Foundation (Grant No. 2017M620490), and
the second author was supported by the National Natural Science Foundation of China
(Grant NO. 11571020).


\begin{thebibliography}{000}
\bibitem{AM}S. Armstrong, J. Mourrat, Lipschitz regularity for elliptic equations with random coefficients,
Arch. Ration. Mech. Anal. 219(2016), no.1, 255-348.
\bibitem{ASC}
S. Armstrong, C. Smart, Quantitative stochastic homogenization of convex integral functionals,
Ann. Sci. \'Ec. Norm. Sup\'er. (4)49 (2016), no.2, 423-481.
\bibitem{ASZ}
S. Armstrong, Z. Shen, Lipschitz estimates in almost-periodic homogenization,
Comm. Pure Appl. Math. 69(2016), no.10, 1882-1923.


\bibitem{MAFHL} M. Avellaneda, F. Lin, Compactness methods in the theory of homogenization,
Comm. Pure Appl. Math. 40(1987), no.6, 803-847.
\bibitem{MAFHL3} M. Avellaneda, F. Lin, $L^p$ bounds on singular integrals in homogenization,
Comm. Pure Appl. Math. 44(1991), no.8-9, 897-910.

\bibitem{ABJLGP} A. Bensoussan, J.-L. Lions, and G.C. Papanicolaou, Asympotic Analysis for Periodic Structures, Studies in Mathematics and its Applications, North Holland, 1978.

\bibitem{CP} L. Caffarelli, I. Peral, On $W^{1,p}$ estimates for elliptic equations in divergence form,
Comm. Pure Appl. Math. 51(1998), no.1, 1-21.

\bibitem{DK} B. Dahlberg, C. Kenig, Hardy spaces and the Neumann problem in $L^p$ for
Laplace's equation in Lipschitz domains, Ann. of Math. (2)125(1987), no.3, 437-465.

\bibitem{GF}
A. Gloria, F. Otto, An optimal error estimate in stochastic homogenization of discrete elliptic equations,
Ann. Appl. Probab. 22(2012), no.1, 1-28.

\bibitem{GNF}
A. Gloria, S. Neukamm, F. Otto, Quantification of ergodicity in stochastic homogenization:
optimal bounds via spectral gap on Glauber dynamics,
Invent. Math. 199(2015), no.2, 455-515.

\bibitem{GX} S. Gu, Q. Xu,
Optimal boundary estimates for Stokes systems in homogenization theory,
SIAM J. Math. Anal. 49(2017), no.5, 3831-3853.

\bibitem{HMT} S. Hofmann, M. Mitrea, M. Taylor, Symbol calculus
for operators of layer potential type on Lipschitz surfaces with VMO normals,
and related pseudodifferential operator calculus, Anal. PDE 8(2015), no.1, 115-181.

\bibitem{VSO} V. Jikov, S. Kozlov, O. Oleinik, Homogenization of Differential Operators and Integral Functionals,
Springer-Verlag, Berlin, 1994.

\bibitem{KFS1} C. Kenig, F. Lin, Z. Shen, Homogenization of elliptic systems with Neumann boundary conditions,
J. Amer. Math. Soc. 26(2013), no.4, 901-937.

\bibitem{KFS2} C. Kenig, F. Lin, Z. Shen, Convergence rates in $L^2$ for elliptic homogenization problems,
Arch. Ration. Mech. Anal. 203(2012), no.3, 1009-1036.

\bibitem{KS1} C. Kenig, Z. Shen, Layer potential methods for elliptic homogenization problems,
Comm. Pure Appl. Math. 64(2011), no.1, 1-44.

\bibitem{KS2} C. Kenig, Z. Shen, Homogenization of elliptic boundary value problems in Lipschitz domains,
Math. Ann. 350(2011), no.4, 867-917.

\bibitem{aKS1} A. Kim, Z. Shen, The Neumann problem in $L^p$ on Lipschitz and convex domains,
J. Funct. Anal. 255(2008), no.7, 1817-1830.

\bibitem{NSX} W. Niu, Z. Shen, Y. Xu, Convergence rates and interior estimates in homogenization of higher order elliptic systems,
J. Funct. Anal. 274(2018), no.8, 2356-2398.

\bibitem{S4} Z. Shen, Lectures on Periodic Homogenization of Elliptic Systems,
arXiv:1710.11257v1 (2017).

\bibitem{S6} Z. Shen, Extrapolation for the $L^p$ Dirichlet Problem in Lipschitz domains,
arXiv:1801.00828v1 (2018).

\bibitem{S5} Z. Shen, Boundary estimates in elliptic homogenization, Anal. PDE 10(2017), 653-694.

\bibitem{S1} Z. Shen, The $L^p$ boundary value problems on Lipschitz domains,
Adv. Math. 216(2007), no.1, 212-254.

\bibitem{S2} Z. Shen, Necessary and sufficient conditions for the solvability of
the $L^p$ Dirichlet problem on Lipschitz domains, Math. Ann. 336(2006), no.3, 697-725.

\bibitem{S3} Z. Shen, Bounds of Riesz transforms on $L^p$ spaces for second order elliptic operators,
 Ann. Inst. Fourier (Grenoble) 55(2005), no.1, 173-197.

\bibitem{ST1} T. Suslina, Homogenization in the Sobolev class $H^1(\mathbb{R}^d)$ for
second order periodic elliptic operators with the inclusion
of first order terms, Algebra i Analiz 22(2010), no.1, 108-222; English transl.,
St. Petersburg Math. J. 22(2011), no. 1, 81-162.

\bibitem{X0} Q. Xu, Uniform regularity estimates in homogenization theory of elliptic systems
with lower terms, J. Math. Anal. Appl. 438(2016), no.2, 1066-1107.

\bibitem{X1} Q. Xu, Uniform regularity estimates in homogenization theory of elliptic systems with lower order terms on the Neumann boundary problem,
J. Differential Equations 261(2016), no.8, 4368-4423.

\bibitem{X3} Q. Xu, Convergence rates for general elliptic homogenization
problems in Lipschitz domains, SIAM J. Math. Anal. 48(2016), no.6, 3742-3788.

\bibitem{X2} Q. Xu, P. Zhao, S. Zhou,
The methods of layer potentials for general elliptic homogenization problems in Lipschitz domains,
arXiv:1801.09220v1 (2018).

\bibitem{ZVVPSE} V. Zhikov, S. Pastukhova, On operator estimates for some problems in homogenization theory,
Russ. J. Math. Phys. 12(2005), no.4, 515-524.


\end{thebibliography}
\end{document}